\title{Compressive Conjugate Directions: Linear Theory\thanks{This work was supported by Stanford Exploration Project.}}
\author{Musa Maharramov\footnotemark[2] \and Stewart A. Levin\footnotemark[2]}
\begin{document}
\maketitle
\newcommand{\slugmaster}{%
\slugger{siims}{xxxx}{xx}{z}{z--z}}

\renewcommand{\thefootnote}{\fnsymbol{footnote}}

\footnotetext[2]{{Department of Geophysics, Stanford University, 397 Panama Mall, Stanford, CA 94305 (\email{musa@sep.stanford.edu}, \email{stew@sep.stanford.edu}).}}
\renewcommand{\thefootnote}{\arabic{footnote}}

\begin{keywords} $L_1$-regularization, total-variation regularization, regularized inversion, ADMM, method of multipliers, Conjugate Gradients, compressive conjugate directions \end{keywords}

\begin{AMS}65K05, 90C06\end{AMS}

\pagestyle{myheadings}
\thispagestyle{plain}
\markboth{Musa Maharramov and Stewart A. Levin}{Compressive Conjugate Directions: Linear Theory}

\begin{abstract}
We present a powerful and easy-to-implement iterative algorithm for solving large-scale optimization problems that involve $L_1$/total-variation (TV) regularization. The method is based on combining the Alternating Directions Method of Multipliers (ADMM) with a Conjugate Directions technique in a way that allows reusing conjugate search directions constructed by the algorithm across multiple iterations of the ADMM. The new method achieves fast convergence by trading off multiple applications of the modeling operator for the increased memory requirement of storing previous conjugate directions. We illustrate the new method with a series of imaging and inversion applications.
\end{abstract}

\section{Introduction}
We address a class of regularized least-squares fitting problems of the form
\begin{equation}
\begin{aligned}
        & \| \mathbf{B} \mathbf{u} \|_1 \; + \; \frac{\alpha}{2}\|\mathbf{A}\mathbf{u}-\mathbf{d} \|^2_2 \;\rightarrow\; \min,\\
        &\mathbf{u}\; \in \; \mathbb{R}^N,\; \mathbf{d}\;\in\;\mathbb{R}^M,\; \mathbf{A}:\mathbb{R}^N\to\mathbb{R}^M,\;
\mathbf{B}:\mathbb{R}^N\to\mathbb{R}^K,\;K\le N,\;
\end{aligned}
\label{eq:opt0}
\end{equation}
where $\mathbf{d}$ is a known vector (data), $\mathbf{u}$ a vector of unknowns\footnote{sometimes referred to as ``model''}, and $\mathbf{A}, \mathbf{B}$ are linear operators. If $\mathbf{B}$ is the identity map, then problem (\ref{eq:opt0}) is a least-squares fitting with $L_1$ regularization,
\begin{equation}
\begin{aligned}
        \| \mathbf{u} \|_1 \; + \; \frac{\alpha}{2}\|\mathbf{A}\mathbf{u}-\mathbf{d} \|^2_2 \;\rightarrow\; \min.
\end{aligned}
\label{eq:opt1}
\end{equation}
If the unknown vector $\mathbf{u}$ is the discretization of a function, and $\mathbf{B}$ is the first-order finite difference operator
\[ \left(\mathbf{B}\mathbf{u}\right)_i\;=\; u_{i+1}-u_i,\; i=1,2,\ldots,N-1,\]
then problem (\ref{eq:opt0}) turns into a least-squares fitting with a total-variation (TV) regularization
\begin{equation}
\begin{aligned}
\| \nabla \mathbf{u} \|_1 \; + \; \frac{\alpha}{2}\|\mathbf{A}\mathbf{u}-\mathbf{d} \|^2_2 \;\rightarrow\; \min.
\end{aligned}
\label{eq:opt}
\end{equation}
On the one hand, in (\ref{eq:opt1}) we seek a model vector $\mathbf{u}$ such that forward-modeled data $\mathbf{A}\mathbf{u}$ match observed data $\mathbf{d}$ in the least squares sense, while imposing sparsity-promoting $L_1$ regularization. In (\ref{eq:opt}), on the other hand, we impose blockiness-promoting total-variation (TV) regularization. Note that rather than using a regularization parameter as a coefficient of the regularization term, we use a data-fitting weight $\alpha$. TV regularization (also known as the Rudin-Osher-Fatemi, or ROF, model \cite{ROF}) acts as a form of ``model styling'' that helps to preserve sharp contrasts and boundaries in the model even when spectral content of input data has a limited resolution. 

$L_1$-TV regularized least-squares fitting, a key tool in imaging and de-noising applications (see, e.g. \cite{ROF,Chambolle1997,VogelOman,Kim2007}), is beginning to play an increasingly important role in applications where the modeling operator $\mathbf{A}$ in (\ref{eq:opt0}) is computationally challenging to apply. In particular, in seismic imaging problems of exploration geophysics such as full-waveform inversion \cite{Tarantola,Fichtner2011} modeling of seismic wave propagation in a three-dimensional medium from multiple seismic sources is by far the greatest contributor to the computational cost of inversion, and reduction of the number of applications of the operator $\mathbf{A}$ is key to success in practical applications. 

$L_1$-regularized least-squares problems can be reduced to inequality-constrained quadratic programs and solved using interior-point methods based on, e.g., Newton \cite{Boyd} or nonlinear Conjugate Gradients \cite{Kim2007} methods. Alternatively, the resulting bound-constrained quadratic programs can be solved using gradient projection \cite{Figuer} or projected Conjugate Gradients \cite{Qiu2013}. A conceptually different class of techniques for solving $L_1$-regularized least-squares problems is based on homotopy methods \cite{Hastie,efron,Osborne}.

Another class of methods for solving (\ref{eq:opt0}) that merits a special mention applies splitting schemes for the sum of two operators. For example the \emph{iterative shrinking-thresholding algorithm} (ISTA) is based on applying \emph{forward-backward splitting} \cite{Bruck,Passty} to solving the $L_1$-regularized problem (\ref{eq:opt1}) by gradient descent \cite{ISTA3,ISTA1,ISTA2}:
\begin{equation}
\begin{aligned}
        \mathbf{y}_{k+1} \;=\; & \mathbf{u}_{k} \;-\; \gamma \alpha \mathbf{A}^T\left(\mathbf{A}\mathbf{u}_k-\mathbf{d}\right),\\
       \mathbf{u}_{k+1} \; = \;& \mathrm{shrink}\left\{\mathbf{y}_{k+1},\gamma\right\}, 
\end{aligned}
\label{eq:ISTA}
\end{equation}
where $\gamma>0$ is a sufficiently small step parameter, and the \emph{soft thresholding} or \emph{shrinkage} operator is the Moreau resolvent (see, e.g., \cite{Monotone}) of $\partial \gamma\|\mathbf{u}\|_1$,
\begin{equation}
        \begin{aligned}
                \mathrm{shrink}\left\{ \mathbf{y}, \gamma \right\} \;=\; & \left(1 + \partial \gamma \|\mathbf{y}\|_1\right)^{-1} \;=\; \mathrm{argmin}\,_{\mathbf{x}}\left\{ \gamma \|\mathbf{x}\|_1 + \frac{1}{2}\|\mathbf{y}-\mathbf{x}\|_2^2\right\}\;=\; \\
        & \frac{\mathbf{y}}{|\mathbf{y}|} \max\left(|\mathbf{y}|-\gamma,0\right),
\end{aligned}
\label{eq:shrink}
\end{equation}
and $\partial \;=\;\partial_{\mathbf{u}}$ denotes the subgradient \cite{Rockafellar1,Monotone}, and the absolute value of a vector is computed component-wise. The typically slow convergence of the first-order method (\ref{eq:ISTA}) can be accelerated by an over-relaxation step \cite{Nesterov}, resulting in the \emph{Fast ISTA} algorithm (FISTA) \cite{Beck2}:
\begin{equation}
\begin{aligned}
        \mathbf{y}_{k+1} \;=\; & \mathbf{u}_{k} \;-\; \gamma \alpha \mathbf{A}^T\left(\mathbf{A}\mathbf{u}_k-\mathbf{d}\right),\\
       \mathbf{z}_{k+1} \; = \;& \mathrm{shrink}\left\{\mathbf{y}_{k+1},\gamma\right\},\\
            \zeta_{k+1} \;=\; & \left(1+\sqrt{1+4\zeta_k^2}\right)/2,\\
         \mathbf{u}_{k+1}\;=\; & \mathbf{y}_{k+1} + \frac{\zeta_k-1}{\zeta_{k+1}}\left(\mathbf{y}_{k+1}-\mathbf{y}_k\right),
\end{aligned}
\label{eq:FISTA}
\end{equation}
where $\zeta_1=1$ and $\gamma$ is sufficiently small. 

It is important to note that algorithm (\ref{eq:FISTA}) is applied to the $L_1$-regularized problem (\ref{eq:opt1}), not the TV-regularized problem (\ref{eq:opt}). An accelerated algorithm for solving a TV-regularized \emph{denoising problem}\footnote{with $\mathbf{A}=\mathbf{I}$ in (\ref{eq:opt})} was proposed in \cite{Beck1} and applied the Nesterov relaxation \cite{Nesterov} to solving the dual of the TV-regularized denoising problem \cite{Chambolle2004}. However, using a similar approach to solving (\ref{eq:opt}) with a non-trivial operator $\mathbf{A}$ results in accelerated schemes that still require inversion of $\mathbf{A}$ \cite{Beck1,Gold2014} and thus lack the primary appeal of the accelerated gradient descent methods---i.e., a single application of $\mathbf{A}$ and its transpose per iteration\footnote{In \cite{Beck1} inversion of $\mathbf{A}$ is replaced by a single gradient descent, however, over-relaxation is applied to the dual variable.}. 

The advantage of (\ref{eq:FISTA}) compared with simple gradient descent is that Nesterov's over-relaxation step requires storing two previous solution vectors and provides improved search direction for minimization. Note, however, that the step length $\gamma$ is inversely proportional to the Lipschitz constant of $\alpha\mathbf{A}^T\left(\mathbf{A}\mathbf{u}-\mathbf{d}\right)$ \cite{Beck2} and may be small in practice.

A very general approach to solving problems (\ref{eq:opt0}) involving either $L_1$ or TV regularization is provided by primal-dual methods. For example, in TV-regularized least-squares problem (\ref{eq:opt}), by substituting
\begin{equation}
        \mathbf{z}=\mathbf{B}\mathbf{u}
        \label{eq:zBu}
\end{equation}
and adding (\ref{eq:zBu}) as a constraint, we obtain an equivalent equality-constrained optimization problem
\begin{equation}
\begin{aligned}
& \| \mathbf{z} \|_1 \; + \; \frac{\alpha}{2}\|\mathbf{A} \mathbf{u}-\mathbf{d} \|^2_2 \;  \;\rightarrow\; \min,\\
&\mathbf{z}\;=\; \mathbf{B} \mathbf{u}.
\end{aligned}
\label{eq:opt2}
\end{equation}
The optimal solution of (\ref{eq:opt2}) corresponds to the saddle-point of its Lagrangian
\begin{equation}
        L_0\left(\mathbf{u},\mathbf{z},\boldsymbol\mu\right)\;=\;
        \|\mathbf{z}\|_1 + \frac{\alpha}{2} \| \mathbf{A}\mathbf{u} - \mathbf{d}\|_2^2 + \boldsymbol \mu^T\left(\mathbf{z}-\mathbf{B}\mathbf{u}\right),
        \label{eq:L0}
\end{equation}
that can be found by the \emph{Uzawa method} \cite{Uzawa}. The Uzawa method finds the saddle point by alternating a minimization with respect to the primal variables $\mathbf{u},\mathbf{z}$ and ascent over the dual variable $\boldsymbol \mu$ for the objective function equal to the standard Lagrangian (\ref{eq:L0}), $L=L_0$,
\begin{equation}
\begin{aligned}
        \left(\mathbf{u}_{k+1},\mathbf{z}_{k+1}\right)\;=\; & \mathrm{argmin}\, L\left(\mathbf{u},\mathbf{z},\boldsymbol \mu_{k}\right),\\
                                 \boldsymbol \mu_{k+1}\;=\; & \boldsymbol \mu_k \; + \; \lambda \left[\mathbf{z}_{k+1}-\mathbf{B}\mathbf{u}_{k+1}\right]
\end{aligned}
        \label{eq:dual}
\end{equation}
for some positive step size $\lambda$. Approach (\ref{eq:dual}), when applied to the Augmented Lagrangian \cite{Rockafellar56}, $L=L_+$,
\begin{equation}
        L_+\left(\mathbf{u},\mathbf{z},\boldsymbol\mu\right)\;=\;
        \|\mathbf{z}\|_1 + \frac{\alpha}{2} \| \mathbf{A}\mathbf{u} - \mathbf{d}\|_2^2 + \boldsymbol \mu^T\left(\mathbf{z}-\mathbf{B}\mathbf{u}\right) + \frac{\lambda}{2}\|\mathbf{z}-\mathbf{B}\mathbf{u}\|^2_2,
        \label{eq:LAug}
\end{equation}
results in the \emph{method of multipliers} \cite{Hestenes69}. For problems (\ref{eq:opt0}) all these methods still require joint minimization with respect to $\mathbf{u}$ and $\mathbf{z}$ of some objective function that includes both $\|\mathbf{z}\|_1$ and a smooth function of $\mathbf{u}$. Splitting the joint minimization into separate steps of minimization with respect  $\mathbf{u}$, followed by minimization with respect to $\mathbf{z}$, results in the \emph{Alternating-Directions Method of Multipliers} (ADMM) \cite{Glowinski1975,Gabay1976,GlowinskiTallec1989,Eckstein1992,ADMM}. To establish a connection to the splitting techniques applied to the sum of two operators, we note that the ADMM is equivalent to applying the Douglas-Rachford splitting \cite{DR} to the problem
\begin{equation}
\partial \left[\| \mathbf{B} \mathbf{u} \|_1 \; + \; \frac{\alpha}{2}\|\mathbf{A}\mathbf{u}-\mathbf{d} \|^2_2 \right]  \ni \boldsymbol 0,
\label{eq:0subgrad}
\end{equation}
where $\partial$ is the subgradient, and problem (\ref{eq:0subgrad}) is equivalent to (\ref{eq:opt0}). The ADMM is a particular case of a primal-dual iterative solution framework with splitting \cite{Zhang2010}, where the minimization in (\ref{eq:dual}) is split into two steps, 
\begin{equation}
        \begin{aligned}
                \mathbf{u}_{k+1}\;=\; &  \mathrm{argmin}\, L\left(\mathbf{u},\mathbf{z}_{k},\boldsymbol \mu_{k}\right),\\
                \mathbf{z}_{k+1}\;=\; &  \mathrm{argmin}\, L\left(\mathbf{u}_{k+1},\mathbf{z},\boldsymbol \mu_k\right),\\
           \boldsymbol \mu_{k+1}\;=\; & \boldsymbol \mu_k \; + \; \lambda \left[\mathbf{z}_{k+1}-\mathbf{B}\mathbf{u}_{k+1}\right]
        \end{aligned}
        \label{eq:framework}
\end{equation}
For the ADMM, we substitute $L=L_+$ in (\ref{eq:framework}) but other choices of a modified Lagrange function $L$ are possible that may produce convergent primal-dual algorithms \cite{Zhang2010}. Making the substitution $L=L_+$ from (\ref{eq:LAug}) into (\ref{eq:framework}), and introducing a scaled vector of multipliers,
\begin{equation}
\mathbf{b}_k\;=\; \boldsymbol \mu_k /\lambda,\; k=0,1,2,\ldots
\label{eq:b}
\end{equation}
we obtain
\begin{equation}
\begin{aligned}
        \mathbf{u}_{k+1} & \;=\; \mathrm{argmin}\, \frac{\alpha}{2}\|\mathbf{A} \mathbf{u}-\mathbf{d} \|^2_2 \; +\; \frac{\lambda}{2} \| \mathbf{z}_k\;-\; \mathbf{B} \mathbf{u}+ \mathbf{b}_k \|_2^2,\\
        \mathbf{z}_{k+1} & \;=\; \mathrm{argmin}\,   \| \mathbf{z} \|_1 \; + \; \frac{\lambda}{2} \| \mathbf{z}\;-\; \mathbf{B} \mathbf{u}_{k+1}+ \mathbf{b}_k \|_2^2,\\
  \mathbf{b}_{k+1} & \;=\; \mathbf{b}_k +\mathbf{z}_{k+1} -  \mathbf{B}\mathbf{u}_{k+1},\; k=0,1,2,\ldots 
\end{aligned}
\label{eq:opt4}
\end{equation}
where we used the fact that adding a constant term $\lambda /2 \|\mathbf{b}_k\|^2_2$ to the objective function does not alter the solution.  In the iterative process (\ref{eq:opt4}), we apply splitting, minimizing 
\begin{equation}
\begin{aligned}
\| \mathbf{z} \|_1 \; + \; \frac{\alpha}{2}\|\mathbf{A} \mathbf{u}-\mathbf{d} \|^2_2 \; + \frac{\lambda}{2} \| \mathbf{z}\;-\; \mathbf{B} \mathbf{u}+ \mathbf{b}_k \|_2^2 
\end{aligned}
\label{eq:obj}
\end{equation}
alternately with respect to $\mathbf{u}$ and $\mathbf{z}$. Further we note that the minimization of (\ref{eq:obj}) with respect to $\mathbf{z}$ (in a splitting step with $\mathbf{u}$ fixed) is given trivially by the shrinkage operator (\ref{eq:shrink}),
\begin{equation}
\mathbf{z}_{k+1}\;=\;\mathrm{shrink}\left\{\mathbf{B} \mathbf{u}-\mathbf{b}_k,{1}/{\lambda}\right\}.
\label{eq:shrink1}
\end{equation}
Combining (\ref{eq:opt4}) and (\ref{eq:shrink1}) we obtain Algorithm \ref{alg:aug}.
\begin{algorithm}[htl]
        \caption{Alternating Direction Method of Multipliers (ADMM) for (\ref{eq:opt0})}
 \label{alg:aug}
\begin{algorithmic}[1]
\State $\mathbf{u}_{0} \;\gets\; \boldsymbol 0^N,\; \mathbf{z}_0^K\;\gets\; \boldsymbol 0$
\State {$\mathbf{b}_0\;\gets\; \boldsymbol 0^K$}
\For {$k\gets0,1,2,3,\ldots$}
\State {$\mathbf{u}_{k+1}\;\gets\; \mathrm{argmin}\,  \left\{\frac{\lambda}{2} \| \mathbf{z}_k - \mathbf{B} \mathbf{u} + \mathbf{b}_k \|_2^2 + \frac{\alpha}{2}\| \mathbf{A} \mathbf{u} - \mathbf{d} \|_2^2\right\}$ }
    \State {$\mathbf{z}_{k+1}\;\gets\; \mathrm{shrink}\left\{\mathbf{B} \mathbf{u}_{k+1}-\mathbf{b}_k,{1}/{\lambda}\right\}$}
    \State {$\mathbf{b}_{k+1}\;\gets\; \mathbf{b}_k + \mathbf{z}_{k+1} - \mathbf{B} \mathbf{u}_{k+1}$}
    \State {Exit loop if ${\|\mathbf{u}_{k+1}-\mathbf{u}_k\|_2}/{\| \mathbf{u}_k \|_2} \; \le \; \text{target accuracy}$}
\EndFor
\end{algorithmic}
\end{algorithm}

Minimization on the first line of (\ref{eq:opt4}) at each step of the ADMM requires inversion of the operator $\mathbf{A}$. In the first-order gradient-descent methods like (\ref{eq:FISTA}) a similar requirement is obviated by replacing the minimization with respect to variable $\mathbf{u}$ by gradient descent. However, for ill-conditioned problems the gradient may be a poor approximation to the optimal search direction. One interpretation of Nesterov's over-relaxation step in (\ref{eq:FISTA}) is that it provides a better search direction by perturbing the current solution update with a fraction of the previous update on the last line of (\ref{eq:FISTA}). The intermediate least-squares problem in (\ref{eq:opt4}) can be solved approximately using, for example, a few iterations of conjugate gradients. However, repeating multiple iterations of Conjugate Gradients at each step of the ADMM may be unnecessary. Indeed, as we demonstrate in the following sections, conjugate directions constructed at earlier steps of the ADMM can be reused because the matrix of the system of normal equations associated with the minimization on the first line of (\ref{eq:opt4}) does not change between ADMM steps\footnote{Only the right-hand sides of the system are updated as a result of thresholding.}. Therefore, we can trade the computational cost of applying the operator $\mathbf{A}$ and its transpose against the cost of storing a few solution and data-size vectors. As this approach is applied to the most general problem (\ref{eq:opt0}) with a non-trivial operator $\mathbf{B}$, in addition to the potential speed-up, this method has the advantage of working equally well for $L_1$ and $TV$-regularized problems.

We stress that our new approach does not improve the theoretical convergence properties of the classic ADMM method under the assumption of exact minimization in step 4 of Algorithm~\ref{alg:aug}. The asymptotic convergence rate is still $O(1/k)$ as with exact minimization \cite{He2012}. The new approach provides a numerically feasible way of implementing the ADMM for problems where a computationally expensive operator $\mathbf{A}$ precludes accurate minimization in step 4. However, the rate of convergence in the general method of multipliers (\ref{eq:dual}) is sensitive to the choice of parameter $\lambda$, and an improved convergence rate for some values of $\lambda$ can be accompanied with more ill-conditioned minimization problems at each step of (\ref{eq:opt4}) \cite{GlowinskiTallec1989}. By employing increasingly more accurate conjugate-directions solution of the minimization problem at each iteration of (\ref{eq:opt4}) the new method offsets the deteriorating condition of the intermediate least-squares problems, and achieves a faster practical convergence at early iterations.

Practical utility of the ADMM in applications that involve sparsity-promoting (\ref{eq:opt1}) or edge-preserving (\ref{eq:opt}) inversion is often determined by how quickly we can resolve sparse or blocky model components. These features can often be \emph{qualitatively} resolved within relatively few initial iterations of the ADMM (see discussion in the Appendix of \cite{GoldOsher09}). In our Section 4, fast recovery of such \emph{local} features will be one of the key indicators for judging the efficiency of the proposed method.

In the next section we describe two new algorithms, \emph{Steered} and \emph{Compressive Conjugate Gradients} based on the principle of reusing conjugate directions for multiple right-hand sides. In Section 3 we prove convergence and demonstrate that the new algorithm coincides with the exact ADMM in a finite number of iterations. Section 4 contains a practical implementation of the Compressive Conjugate Gradients method. We test the method on a series of problems from imaging and mechanics, and compare its performance against FISTA and ADMM with gradient descent and restarted conjugate gradients.

\section{Steered and Compressive Conjugate Directions}

Step 4 of Algorithm \ref{alg:aug} is itself a least-squares optimization problem of the form
\begin{equation}
        \|\mathbf{F}\mathbf{u}\;-\;\mathbf{v}_k\|_2^2 \; \rightarrow\; \min,
        \label{eq:Fuvk}
\end{equation}
where
\begin{equation}
        \mathbf{F}\;=\; 
\begin{bmatrix}
        \sqrt{\alpha}\mathbf{A}\\
        \sqrt{\lambda}\mathbf{B}
\end{bmatrix}
        \label{eq:Fop}
\end{equation}
and
\begin{equation}
        \mathbf{v}_k\;=\;
        \begin{bmatrix}
                \sqrt{\alpha}\mathbf{d}\\
                \sqrt{\lambda}\left(\mathbf{z}_k+\mathbf{b}_k\right)
        \end{bmatrix}
        \label{eq:vk}
\end{equation}

Solving optimization problem (\ref{eq:Fuvk}) is mathematically equivalent to solving the following system of normal equations \cite{NLA},
\begin{equation}
        \mathbf{F}^T\mathbf{F}\mathbf{u}\;=\; \mathbf{F}^T\mathbf{v}_k,
        \label{eq:norm}
\end{equation}
as operator (\ref{eq:Fop}) has maximum rank. Solving (\ref{eq:norm}) has the disadvantage of squaring the condition number of operator (\ref{eq:Fop}) \cite{NLA}. When the operator $\mathbf{A}$ is available in a matrix form, and a factorization of operator $\mathbf{F}$ is numerically feasible, solving the normal equations (\ref{eq:norm}) should be avoided and a technique based on a matrix factorization should be applied directly to solving (\ref{eq:Fuvk}) \cite{BJORCK,SAAD}. However, when matrix $\mathbf{A}$ is not known explicitly or its size exceeds practical limitations of direct methods, as is the case in applications of greatest interest for us, an iterative algorithm, such as the Conjugate Gradients for Normal Equations (CGNE) \cite{BJORCK,SAAD}, can be used to solve (\ref{eq:norm}).  Solving (\ref{eq:Fuvk}) exactly may be unnecessary and we can expect that for large-scale problems only a few steps of an iterative method need be carried out. However, every iteration typically requires the application of operator $\mathbf{A}$ and its adjoint, and in large-scale optimization problems we are interested in minimizing the number of applications of these operations. For large-scale optimization problems we need an alternative to re-starting an iterative solver for each intermediate problem (\ref{eq:Fuvk}). We propose to minimize restarting iterations\footnote{avoiding restarting altogether in the theoretical limit of infinite computer storage} by devising a conjugate-directions technique for solving (\ref{eq:Fuvk}) with a non-stationary right-hand side. At each iteration of the proposed algorithm we find a search direction that is conjugate to previous directions with respect to the operator $\mathbf{F}^T\mathbf{F}$. In the existing conjugate direction techniques, iteratively constructed conjugate directions span the Krylov subspaces \cite{NLA},
\begin{equation}
        \mathcal{K}_k \;=\;        \mathrm{span}\,\left\{ \mathbf{F}^T\mathbf{v}_0, \left(\mathbf{F}^T\mathbf{F}\right) \mathbf{F}^T \mathbf{v}_0,\ldots,\left(\mathbf{F}^T\mathbf{F}\right)^k \mathbf{F}^T\mathbf{v}_0  \right\},\; k=0,1,\ldots\; .
        \label{eq:Krylov}
\end{equation}
However, in our approach we construct a sequence of vectors (search directions) that are conjugate with respect to operator $\mathbf{F}^T\mathbf{F}$ at the $k$th step but may not span the Krylov subspace $\mathcal{K}_k$. This complicates convergence analysis of our technique, but allows ``steering'' search directions by iteration-dependent right-hand sides. Since the right-hand side in (\ref{eq:Fuvk}) is the result of the shrinkage (\ref{eq:shrink1}) at previous iterations that steer or compress the solution, we call our approach ``steered'' or ``compressive'' conjugate directions. 

For the least-squares problem (\ref{eq:Fuvk}), we construct two sets of vectors for $k=0,1,2,\ldots$
\begin{equation}
\begin{aligned}
        &\left\{ \mathbf{p}_0,\mathbf{p}_1,\mathbf{p}_2,\ldots,\mathbf{p}_k \right\},\; \left\{ \mathbf{q}_0,\mathbf{q}_1,\mathbf{q}_2,\ldots,\mathbf{q}_k \right\},\\
        & \mathbf{q}_i\;=\; \mathbf{F} \mathbf{p}_i,\; i=0,1,2,\ldots,k,
\end{aligned}
\label{eq:pq}
\end{equation}
such that
\begin{equation}
        \mathbf{q}_i^T \mathbf{q}_j \;=\; \mathbf{p}_i^T \mathbf{F}^T \mathbf{F} \mathbf{p}_j \;=\; 0\; \mathrm{ if }\; i\;\neq\; j.
        \label{eq:conjdir}
\end{equation}
Equations (\ref{eq:pq}) and (\ref{eq:conjdir}) mean that the vectors $\mathbf{p}_i$ form \emph{conjugate directions} \cite{NLA,SAAD}. At each iteration we find an approximation $\mathbf{u}_k$ to the solution of (\ref{eq:Fuvk}) as a linear combination of vectors $\mathbf{p}_i,i=0,1,\ldots,k$, for which the residual
\begin{equation}
        \mathbf{r}_{k+1} \; = \; \mathbf{v}_{k+1} \;- \ \mathbf{F}\mathbf{u}_{k+1},
        \label{eq:res}
\end{equation}
is orthogonal to vectors $\mathbf{q}_i$,
\begin{equation}
        \mathbf{q}_i^T \mathbf{r}_{k+1} \;=\; \mathbf{q}_i^T \left(\mathbf{v}_{k+1} \;- \ \mathbf{F}\mathbf{u}_{k+1}\right)\;=\; 0,\; i=0,1,\ldots,k.
        \label{eq:resorth}
\end{equation}
Vector $\mathbf{p}_k$ is constructed as a linear combination of \emph{all} previous vectors $\mathbf{p}_i,i=0,1,\ldots,k$ and $\mathbf{F}^T\mathbf{r}_k$ so that the conjugacy condition in (\ref{eq:pq}) is satisfied. The resulting algorithm for \emph{arbitrary} $\mathbf{v}_k$ depending on $k$ is given by Algorithm \ref{alg:scd}.

\begin{algorithm}[htl]
        \caption{Steered Conjugate Directions for solving (\ref{eq:Fuvk})}
 \label{alg:scd}
\begin{algorithmic}[1]
\State $\mathbf{u}_{0} \;\gets\; \boldsymbol 0^N$
\State {$\mathbf{p}_0\;\gets\; \mathbf{F}^T \mathbf{v}_0,\;\mathbf{q}_0\;\gets\;\mathbf{F}\mathbf{p}_0,\; \delta_0\;\gets\; {\mathbf{q}_0^T \mathbf{q}_0}$}
\For {$k=0,1,2,3,\ldots$}
    \For {$i=0,1,\ldots,k$}
        \State {$\tau_i\;\gets\; {\mathbf{q}_i^T \mathbf{v}_{k}}/\delta_i$}
    \EndFor
    \State {$\mathbf{u}_{k+1}\;\gets\;\sum_{i=0}^{k} \tau_i \mathbf{p}_i$}
    \State{$\mathbf{r}_{k+1}\;\gets\;\mathbf{v}_{k+1}\;-\; \sum_{i=0}^{k}\tau_i  \mathbf{q}_i$}
        
    \State {$\mathbf{w}_{k+1}\;\gets\;  \mathbf{F}^T \mathbf{r}_{k+1}$}
    \State {$\mathbf{s}_{k+1}\;\gets\;  \mathbf{F} \mathbf{w}_{k+1}$}
    \For {$i=0,1,\ldots,k$}
        \State {$\beta_i\;\gets\;  - {\mathbf{q}_i^T \mathbf{s}_{k+1}}/\delta_i$}
    \EndFor
    \State {$\mathbf{p}_{k+1}\;\gets\;\sum_{i=0}^{k} \beta_i \mathbf{p}_i \; + \; \mathbf{w}_{k+1}$}
    \State {$\mathbf{q}_{k+1}\;\gets\;\sum_{i=0}^{k} \beta_i \mathbf{q}_i \; + \; \mathbf{s}_{k+1}$}
    \State {$\delta_{k+1} \;\gets\;\mathbf{q}_{k+1}^T \mathbf{q}_{k+1}$}
    \If {$\delta_{k+1} = 0$}
    \Comment {Use condition ``$\delta_{k+1}  <  \text{tolerance}$'' in practice}
         \State {$\delta_{k+1}\;\gets\;1,\; \mathbf{p}_{k+1}\;\gets\;\mathbf{0}^N,\; \mathbf{q}_{k+1}\;\gets\;\mathbf{0}^{M+K}$}
    \EndIf
    \State {Exit loop if ${\|\mathbf{u}_{k+1}-\mathbf{u}_k\|_2}/{\| \mathbf{u}_k \|_2} \; \le \; \text{target accuracy}$}
\EndFor
\end{algorithmic}
\end{algorithm}

Note that the above algorithm is not specific to a particular sequence of right-hand-side vectors $\mathbf{v}_k$ and its applicability goes beyond solving the constrained optimization problems (\ref{eq:opt2}). The algorithm requires storing $2k+2$ vectors (\ref{eq:pq}), as well as one vector each for the current solution iterate $\mathbf{u}_k$, variable right-hand side $\mathbf{v}_k$, intermediate vectors $\mathbf{w}_k$ and $\mathbf{s}_k$. The requirement of storing a growing number of vectors makes the algorithm resemble the GMRES method \cite{SAAD} for solving linear systems with non-self-adjoint operators. However, in our case, this is a consequence of having a variable right-hand side, requiring re-computation of solution iterates as linear combinations of all of the previous search directions (\ref{eq:pq}). This requirement can be relaxed in applications where vector $\mathbf{v}_k$ is updated, for example, by the modified Lagrangian technique for solving a constrained optimization problem, and converges to a limit. In Section 4 we describe practical applications of the algorithm achieving fast convergence while storing only a subset of vectors (\ref{eq:pq}). The algorithm requires one application of $\mathbf{F}$ and its transpose at each iteration and $2k+3$ dot-products of large vectors. 

Combining Algorithms \ref{alg:aug} and \ref{alg:scd} we obtain the \emph{Compressive Conjugate Directions} Algorithm~\ref{alg:ccd}.

\begin{algorithm}[htl]
        \caption{Compressive Conjugate Directions for (\ref{eq:opt0})}
 \label{alg:ccd}
\begin{algorithmic}[1]
\State $\mathbf{u}_{0} \;\gets\; \boldsymbol 0^N,\; \mathbf{z}_0\;\gets\; \boldsymbol 0^K;\; \mathbf{b}_0\;\gets\; \boldsymbol 0^K,\; \mathbf{v}_0\;\gets\;\begin{bmatrix}
                \sqrt{\alpha}\mathbf{d}\\
                \sqrt{\lambda}\left(\mathbf{z}_0+\mathbf{b}_0\right)
\end{bmatrix}$
\State {$\mathbf{p}_0\;\gets\; \mathbf{F}^T \mathbf{v}_0,\;\mathbf{q}_0\;\gets\;\mathbf{F}\mathbf{p}_0,\;\delta_0\;\gets\; \mathbf{q}^T_0\mathbf{q}_0$}
\For {$k=0,1,2,3,\ldots$}
    \For {$i=0,1,\ldots,k$}
        \State {$\tau_i\;\gets\; {\mathbf{q}_i^T \mathbf{v}_{k}}/\delta_i$}
    \EndFor
    \State {$\mathbf{u}_{k+1}\;\gets\;\sum_{i=0}^{k} \tau_i \mathbf{p}_i$}
    \State {$\mathbf{z}_{k+1}\;\gets\; \mathrm{shrink}\left\{\mathbf{B} \mathbf{u}_{k+1}-\mathbf{b}_k,{1}/{\lambda}\right\}$}
    \State {$\mathbf{b}_{k+1}\;\gets\; \mathbf{b}_k +\mathbf{z}_{k+1} - \mathbf{B} \mathbf{u}_{k+1}$}
    \State {$\mathbf{v}_{k+1}\;\gets\;\begin{bmatrix}
                \sqrt{\alpha}\mathbf{d}\\
                \sqrt{\lambda}\left(\mathbf{z}_{k+1}+\mathbf{b}_{k+1}\right)
\end{bmatrix}$}
    \State{$\mathbf{r}_{k+1}\;\gets\;\mathbf{v}_{k+1}\;-\; \sum_{i=0}^{k}\tau_i  \mathbf{q}_i$}
        
    \State {$\mathbf{w}_{k+1}\;\gets\;  \mathbf{F}^T \mathbf{r}_{k+1}$}
    \State {$\mathbf{s}_{k+1}\;\gets\;  \mathbf{F} \mathbf{w}_{k+1}$}
    \For {$i=0,1,\ldots,k$}
        \State {$\beta_i\;\gets\;  - {\mathbf{q}_i^T \mathbf{s}_{k+1}}/\delta_i$}
    \EndFor
    \State {$\mathbf{p}_{k+1}\;\gets\;\sum_{i=0}^{k} \beta_i \mathbf{p}_i \; + \; \mathbf{w}_{k+1}$}
    \State {$\mathbf{q}_{k+1}\;\gets\;\sum_{i=0}^{k} \beta_i \mathbf{q}_i \; + \; \mathbf{s}_{k+1}$}
    \State {$\delta_{k+1} \;\gets\;\mathbf{q}_{k+1}^T \mathbf{q}_{k+1}$}
    \If {$\delta_{k+1} = 0$}
    \Comment {Use condition ``$\delta_{k+1}  <  \text{tolerance}$'' in practice}
         \State {$\delta_{k+1}\;\gets\;1,\; \mathbf{p}_{k+1}\;\gets\;\mathbf{0}^N,\; \mathbf{q}_{k+1}\;\gets\;\mathbf{0}^{M+K}$}
    \EndIf
    \State {Exit loop if ${\|\mathbf{u}_{k+1}-\mathbf{u}_k\|_2}/{\| \mathbf{u}_k \|_2} \; \le \; \text{target accuracy}$}
\EndFor
\end{algorithmic}
\end{algorithm}

\section{Convergence Analysis}
Convergence properties of the ADMM were studied in many publications and are well known. However, here we provide a self-contained proof of convergence for Algorithm~\ref{alg:aug} that mostly follows the presentation of \cite{ADMM}. Later, we use this result to study the convergence of Algorithm~\ref{alg:ccd}.  
\begin{theorem}\label{thm:aug}
        Assume that $M\ge N$, operators $\mathbf{A}$, $\mathbf{B}$ are maximum rank, and 
        \begin{equation}
        \begin{aligned}
                \mathbf{u}\; & = \; \mathbf{u}^\ast,\\
                \mathbf{z}\; & = \; \mathbf{z}^\ast\;=\; \mathbf{B}\mathbf{u}^\ast,
        \end{aligned}
        \label{eq:solution0}
        \end{equation}
is the unique solution of problem (\ref{eq:opt2}). Assume that a vector $\mathbf{b}^\ast$ is defined as
        \begin{equation}
        \begin{aligned}
                \mathbf{b}^\ast \; & = \; \boldsymbol \mu^\ast/\lambda,
        \end{aligned}
        \label{eq:multiplier0}
        \end{equation}
where $\boldsymbol \mu^\ast$ is the vector of Lagrange multipliers for the equality constraint in (\ref{eq:opt2}).  Algorithm~\ref{alg:aug} then converges to this solution if $\lambda > 0$, that is,
        \begin{equation}
        \begin{aligned}
                \mathbf{u}_k\; \rightarrow \; \mathbf{u}^\ast,\; \mathbf{z}_k\;  \rightarrow \; \mathbf{z}^\ast,\; \mathbf{b}_k\;  \rightarrow \; \mathbf{b}^\ast,\; k\;\rightarrow\; \infty.
        \end{aligned}
        \label{eq:conv0}
        \end{equation}
\end{theorem}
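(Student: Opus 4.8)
The plan is to prove convergence through a monotonically decreasing Lyapunov (energy) function, following the classical ADMM analysis of \cite{ADMM} and then sharpening it to obtain convergence of the iterates themselves. Write $f(\mathbf{z}) = \|\mathbf{z}\|_1$ and $g(\mathbf{u}) = \frac{\alpha}{2}\|\mathbf{A}\mathbf{u}-\mathbf{d}\|_2^2$, denote the primal residual by $\mathbf{t}_{k+1} = \mathbf{z}_{k+1} - \mathbf{B}\mathbf{u}_{k+1}$, and set $p_k = f(\mathbf{z}_k) + g(\mathbf{u}_k)$ and $p^\ast = f(\mathbf{z}^\ast) + g(\mathbf{u}^\ast)$. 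First I would record the KKT conditions satisfied by the saddle point $(\mathbf{u}^\ast,\mathbf{z}^\ast,\boldsymbol\mu^\ast)$ of the unaugmented Lagrangian (\ref{eq:L0}): primal feasibility $\mathbf{z}^\ast = \mathbf{B}\mathbf{u}^\ast$, together with the stationarity relations $\mathbf{B}^T\boldsymbol\mu^\ast = \alpha\mathbf{A}^T(\mathbf{A}\mathbf{u}^\ast - \mathbf{d})$ and $-\boldsymbol\mu^\ast \in \partial\|\mathbf{z}^\ast\|_1$. The candidate energy function, expressed through the scaled multipliers (\ref{eq:b}), is
\begin{equation}
V_k \;=\; \lambda\|\mathbf{b}_k - \mathbf{b}^\ast\|_2^2 \;+\; \lambda\|\mathbf{z}_k - \mathbf{z}^\ast\|_2^2.
\label{eq:lyap}
\end{equation}

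Next I would bound $p_{k+1} - p^\ast$ from two sides. The lower bound follows from the saddle-point inequality $L_0(\mathbf{u}^\ast,\mathbf{z}^\ast,\boldsymbol\mu^\ast) \le L_0(\mathbf{u}_{k+1},\mathbf{z}_{k+1},\boldsymbol\mu^\ast)$, which after using $\mathbf{z}^\ast = \mathbf{B}\mathbf{u}^\ast$ reads $p_{k+1} - p^\ast \ge -(\boldsymbol\mu^\ast)^T\mathbf{t}_{k+1}$. The upper bound comes from the optimality of the two minimization steps of Algorithm~\ref{alg:aug}. Stationarity of the $\mathbf{z}$-update (step 5) gives $-\boldsymbol\mu_{k+1}\in\partial\|\mathbf{z}_{k+1}\|_1$, while stationarity of the $\mathbf{u}$-update (step 4), once the multiplier recursion $\boldsymbol\mu_{k+1} = \boldsymbol\mu_k + \lambda\mathbf{t}_{k+1}$ is substituted, gives $\mathbf{B}^T\boldsymbol\mu_{k+1} = \alpha\mathbf{A}^T(\mathbf{A}\mathbf{u}_{k+1}-\mathbf{d}) + \lambda\mathbf{B}^T(\mathbf{z}_{k+1}-\mathbf{z}_k)$. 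Converting these subgradient inclusions into linear lower bounds by convexity of $f$ and $g$, adding them, and simplifying with $\mathbf{B}\mathbf{u}^\ast = \mathbf{z}^\ast$ yields an upper bound of the form $p_{k+1}-p^\ast \le -\boldsymbol\mu_{k+1}^T\mathbf{t}_{k+1} + \lambda(\mathbf{z}_{k+1}-\mathbf{z}_k)^T\big(\mathbf{t}_{k+1} - (\mathbf{z}_{k+1}-\mathbf{z}^\ast)\big)$.

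Adding the two bounds and eliminating $\boldsymbol\mu_{k+1}$ through the recursion collapses the cross terms into a telescoping expression and produces the descent estimate
\begin{equation}
V_{k+1} \;\le\; V_k \;-\; \lambda\|\mathbf{t}_{k+1}\|_2^2 \;-\; \lambda\|\mathbf{z}_{k+1}-\mathbf{z}_k\|_2^2.
\label{eq:descent}
\end{equation}
From (\ref{eq:descent}) the sequence $V_k$ is nonincreasing and bounded below, hence convergent; summing over $k$ shows $\sum_k\big(\|\mathbf{t}_{k+1}\|_2^2 + \|\mathbf{z}_{k+1}-\mathbf{z}_k\|_2^2\big) < \infty$, so the primal residual $\mathbf{t}_k \to \boldsymbol 0$ and the successive differences $\mathbf{z}_{k+1}-\mathbf{z}_k \to \boldsymbol 0$. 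Feeding these back into the two bounds on $p_{k+1}-p^\ast$ then squeezes $p_k \to p^\ast$.

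The step I expect to be the main obstacle is upgrading this objective-and-residual convergence, which is all the bare Lyapunov argument delivers, into convergence of the iterates to $(\mathbf{u}^\ast,\mathbf{z}^\ast,\mathbf{b}^\ast)$; this is precisely where the hypotheses $M\ge N$ and maximum rank of $\mathbf{A}$ enter. Since then $\mathbf{A}^T\mathbf{A}\succ 0$, the function $g$ is strongly convex, and I would exploit this by replacing the plain convexity bound for $g$ with its strongly convex refinement, which contributes an additional term $-\tfrac{1}{2}m\|\mathbf{u}_{k+1}-\mathbf{u}^\ast\|_2^2$ (with $m = \alpha\,\sigma_{\min}(\mathbf{A})^2 > 0$) to the right-hand side of (\ref{eq:descent}). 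Summability of that term forces $\mathbf{u}_k \to \mathbf{u}^\ast$; the identity $\mathbf{z}_k = \mathbf{B}\mathbf{u}_k + \mathbf{t}_k$ with $\mathbf{t}_k\to\boldsymbol 0$ then gives $\mathbf{z}_k \to \mathbf{B}\mathbf{u}^\ast = \mathbf{z}^\ast$. Finally, passing to the limit in $\mathbf{B}^T\boldsymbol\mu_{k+1} = \alpha\mathbf{A}^T(\mathbf{A}\mathbf{u}_{k+1}-\mathbf{d}) + \lambda\mathbf{B}^T(\mathbf{z}_{k+1}-\mathbf{z}_k)$ and using that maximum rank of $\mathbf{B}$ makes $\mathbf{B}^T$ injective, I conclude $\boldsymbol\mu_k \to \boldsymbol\mu^\ast$, i.e. $\mathbf{b}_k \to \mathbf{b}^\ast$, completing (\ref{eq:conv0}).
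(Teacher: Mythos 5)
Your argument is essentially the paper's: the same saddle-point lower bound (\ref{eq:lowerest}), the same optimality-based upper bound (\ref{eq:upperest}), and the same Lyapunov quantity $\lambda\|\mathbf{b}_k-\mathbf{b}^\ast\|_2^2+\lambda\|\mathbf{z}_k-\mathbf{z}^\ast\|_2^2$, whose monotone decrease gives summability of the residuals and $p_k\to p^\ast$. One step you pass over too quickly: adding the two bounds and telescoping the multiplier terms yields only a decrease of $V_k$ by the \emph{combined} square $\|(\mathbf{z}_k-\mathbf{z}_{k+1})+\mathbf{t}_{k+1}\|_2^2$ (the paper's (\ref{eq:upper3})); to split this into the two separate squares appearing in your descent inequality you still need the cross term $(\mathbf{z}_k-\mathbf{z}_{k+1})^T(\mathbf{b}_{k+1}-\mathbf{b}_k)$ to be nonnegative, which the paper establishes in (\ref{eq:upper5})--(\ref{eq:upper7}) from monotonicity of $\partial\|\cdot\|_1$ evaluated at consecutive iterates. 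That inequality is standard but not free, and without it you cannot conclude that $\mathbf{t}_{k}$ and $\mathbf{z}_{k+1}-\mathbf{z}_k$ tend to zero \emph{separately}, which the rest of your argument uses.

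Where you genuinely diverge from the paper is the endgame. The paper upgrades $p_k\to p^\ast$ to $\mathbf{u}_k\to\mathbf{u}^\ast$ by extracting a convergent subsequence from the bounded sequence $\mathbf{u}_k$ and invoking continuity, strict convexity, and uniqueness of the minimizer. You instead replace the plain convexity bound for the $\mathbf{u}$-subproblem by its strongly convex refinement (modulus $\alpha\,\sigma_{\min}(\mathbf{A})^2>0$ under $M\ge N$ and maximum rank of $\mathbf{A}$), which injects a summable term $\|\mathbf{u}_{k+1}-\mathbf{u}^\ast\|_2^2$ directly into the descent inequality. This is correct and arguably cleaner: it delivers whole-sequence convergence in one stroke, avoids having to establish boundedness of $\mathbf{u}_k$ separately, and even yields the quantitative conclusion $\sum_k\|\mathbf{u}_k-\mathbf{u}^\ast\|_2^2<\infty$, which the paper's compactness argument does not. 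The recovery of $\mathbf{z}_k\to\mathbf{z}^\ast$ from $\mathbf{t}_k\to\boldsymbol 0$ and of $\mathbf{b}_k\to\mathbf{b}^\ast$ from injectivity of $\mathbf{B}^T$ is the same in both treatments (the paper's (\ref{eq:kkt1})--(\ref{eq:kkt2})).
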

\begin{proof} Problem (\ref{eq:opt2}) has a convex objective function and equality constraints, hence (\ref{eq:solution0},\ref{eq:multiplier0}) is a saddle point of its Lagrangian (\ref{eq:L0}) \cite{Boyd}. Substituting $\mathbf{z}_{k+1},\mathbf{u}_{k+1}$ from Algorithm~\ref{alg:aug}, we have
        \begin{equation}
                \begin{aligned}
                        & L_0\left (\mathbf{z}^\ast,\mathbf{u}^\ast,\boldsymbol \mu^\ast\right) \; \le \; 
                        L_0\left(\mathbf{z}_{k+1},\mathbf{u}_{k+1},\boldsymbol \mu^\ast\right)\; \Longleftrightarrow \\
                        p^\ast \;=\; & \|\mathbf{B}\mathbf{u}^\ast\|_1 \;+\; \frac{\alpha}{2}\| \mathbf{A}\mathbf{u}^\ast-\mathbf{d}\|_2^2 \; = \;  \|\mathbf{z}^\ast\|_1 \;+\; \frac{\alpha}{2}\| \mathbf{A}\mathbf{u}^\ast-\mathbf{d}\|_2^2 \; \le \; \\
                                       & \|\mathbf{z}_{k+1}\|_1 \; + \; \frac{\alpha}{2}\| \mathbf{A}\mathbf{u}_{k+1}-\mathbf{d}\|_2^2 \; + \; \boldsymbol\mu^{\ast T}\left(\mathbf{z}_{k+1}-\mathbf{B}\mathbf{u}_{k+1}\right)\; = \; \\
                                     & p_{k+1} \; + \;  \boldsymbol\mu^{\ast T}\left(\mathbf{z}_{k+1}-\mathbf{B}\mathbf{u}_{k+1}\right)\; = \; p_{k+1}  \; + \;  \lambda \mathbf{b}^{\ast T}\left(\mathbf{z}_{k+1}-\mathbf{B}\mathbf{u}_{k+1}\right),
                \end{aligned}
                \label{eq:lowerest}
        \end{equation}
        where $p^\ast$ is the optimal value of the objective function and $p_{k+1}$ is its approximation at iteration $k$ of the algorithm. Inequality (\ref{eq:lowerest}) provides a lower bound for the objective function estimate $p_{k+1}$. Step 4 of the algorithm is equivalent to
\begin{equation}
        \alpha \mathbf{A}^T\mathbf{A} \mathbf{u}_{k+1}\; + \;\lambda \mathbf{B}^T\mathbf{B} \mathbf{u}_{k+1} \;=\;  \alpha \mathbf{A}^T\mathbf{d} + \lambda \mathbf{B}^T\left(\mathbf{z}_k + \mathbf{b}_k\right).
        \label{eq:ukp1}
\end{equation}
Substituting the expression for $\mathbf{b}_k$ from steps 6 into (\ref{eq:ukp1}), we obtain
\begin{equation}
        \alpha \mathbf{A}^T\mathbf{A} \mathbf{u}_{k+1}\; \;=\;  \alpha \mathbf{A}^T\mathbf{d} + \lambda \mathbf{B}^T\left(\mathbf{z}_k - \mathbf{z}_{k+1} + \mathbf{b}_{k+1}\right).
        \label{eq:ukp2}
\end{equation}
Equality (\ref{eq:ukp2}) is equivalent to 
\begin{equation}
        \mathbf{u}_{k+1}\;=\; \mathrm{argmin}\, \frac{\alpha}{2} \|\mathbf{A}\mathbf{u}-\mathbf{d}\|_2^2 \; - \; \lambda \left(\mathbf{z}_k-\mathbf{z}_{k+1} + \mathbf{b}_{k+1}\right)^T\mathbf{B}\mathbf{u}.
        \label{eq:ukp3}
\end{equation}
Substituting $\mathbf{u}_{k+1}$ and $\mathbf{u}^\ast$ into the right-hand side of (\ref{eq:ukp3}), we obtain
\begin{equation}
        \begin{aligned}
        \frac{\alpha}{2} \|\mathbf{A}\mathbf{u}_{k+1}-\mathbf{d}\|_2^2 \; \le \; &  \frac{\alpha}{2} \|\mathbf{A}\mathbf{u}^\ast-\mathbf{d}\|_2^2  \;  + \\
                                                                                 & \lambda \left(\mathbf{z}_k-\mathbf{z}_{k+1} + \mathbf{b}_{k+1}\right)^T\mathbf{B}\left(\mathbf{u}_{k+1}-\mathbf{u}^\ast\right).
        \end{aligned}
        \label{eq:ukp4}
\end{equation}
Step 5 is equivalent to
\begin{equation}
        \begin{aligned}
                \boldsymbol 0 \; \in \; \partial_{\mathbf{z}} \|\mathbf{z}\|_1 \; + \; \lambda \left( \mathbf{z}_{k+1} - \mathbf{B}\mathbf{u}_{k+1} + \mathbf{b}_k\right)\; = \;  \partial_{\mathbf{z}} \|\mathbf{z}\|_1 \; + \; \lambda {b}_{k+1},\\
                \mathbf{z}_{k+1} \; = \mathrm{argmin}\, \left \{\| \mathbf{z} \|_1 \; + \; \lambda \mathbf{b}_{k+1}^T\mathbf{z} \right\},
        \end{aligned}
        \label{eq:ukp5}
\end{equation}
where we used the expression for $\mathbf{b}_k$ from step 6. Substituting $\mathbf{z}=\mathbf{z}_{k+1}$ and $\mathbf{z}=\mathbf{z}^\ast$ into the right-hand side of the second line of (\ref{eq:ukp5}), we obtain
\begin{equation}
 \|\mathbf{z}_{k+1}\|_1 \; \le \; \| \mathbf{z}^\ast\|_1 \; + \; \lambda \mathbf{b}_{k+1}^T\left(\mathbf{z}^\ast - \mathbf{z}_{k+1}\right).
 \label{eq:ukp6}
\end{equation}
Adding (\ref{eq:ukp4}) and (\ref{eq:ukp6}), we get
\begin{equation}
        \begin{aligned}
                p_{k+1} \; \le \; & p^\ast  \;  + \; \lambda \mathbf{b}_{k+1}^T\left(\mathbf{z}^\ast - \mathbf{z}_{k+1}\right) + \\                                                                                 & \lambda \left(\mathbf{z}_k-\mathbf{z}_{k+1} + \mathbf{b}_{k+1}\right)^T\mathbf{B}\left(\mathbf{u}_{k+1}-\mathbf{u}^\ast\right),
        \end{aligned}
        \label{eq:upperest}
\end{equation}
an upper bound for $p_{k+1}$. Adding (\ref{eq:lowerest}) and (\ref{eq:upperest}), we get
\begin{equation}
\begin{aligned}
        0 \; \le \;  &  \lambda \mathbf{b}^{\ast T}\left(\mathbf{z}_{k+1}-\mathbf{B}\mathbf{u}_{k+1}\right) \; + \; \lambda \mathbf{b}_{k+1}^T\left(\mathbf{z}^\ast - \mathbf{z}_{k+1}\right) + \\                                                                                 & \lambda \left(\mathbf{z}_k-\mathbf{z}_{k+1} + \mathbf{b}_{k+1}\right)^T\mathbf{B}\left(\mathbf{u}_{k+1}-\mathbf{u}^\ast\right),
\end{aligned}
\end{equation}
or after rearranging,
\begin{equation}
\begin{aligned}
                   0 \; \le \;   & \lambda \left( \mathbf{b}^\ast - \mathbf{b}_{k+1}\right)^T\left(\mathbf{z}_{k+1} - \mathbf{B}\mathbf{u}_{k+1}\right) \; - \; \lambda \left( \mathbf{z}_{k} - \mathbf{z}_{k+1}\right)^T\left(\mathbf{z}_{k+1} - \mathbf{B}\mathbf{u}_{k+1}\right) \; + \\
                    & \lambda \left( \mathbf{z}_{k} - \mathbf{z}_{k+1}\right)^T\left(\mathbf{z}_{k+1} - \mathbf{z}^\ast\right).
\end{aligned}
\label{eq:upper1}
\end{equation}
We will now use (\ref{eq:upper1}) to derive an upper estimate for 
\[
        \|\mathbf{b}_k-\mathbf{b}^\ast\|_2^2 \; + \; 
        \|\mathbf{z}_{k}-\mathbf{z}^\ast\|_2^2.
\]
Using step 6 of Algorithm~\ref{alg:aug} for the first term in (\ref{eq:upper1}) and introducing $\boldsymbol\rho_{k+1} = \mathbf{z}_{k+1}-\mathbf{B}\mathbf{u}_{k+1}$, we get
\begin{equation}
\begin{aligned}
        & \lambda \left( \mathbf{b}^\ast - \mathbf{b}_{k+1}\right)^T\boldsymbol\rho_{k+1} \; = \\
        & \lambda \left( \mathbf{b}^\ast - \mathbf{b}_{k} -  \boldsymbol\rho_{k+1}\right)^T\boldsymbol\rho_{k+1} \; = \; \lambda \left( \mathbf{b}^\ast - \mathbf{b}_{k}\right)^T \boldsymbol\rho_{k+1}  - \lambda\| \boldsymbol\rho_{k+1}\|_2^2 \; = \\ &
        \lambda \left( \mathbf{b}^\ast - \mathbf{b}_{k}\right)^T \left(\mathbf{b}_{k+1} - \mathbf{b}_{k}\right) - \frac{\lambda}{2} \| \boldsymbol\rho_{k+1}\|_2^2 - \frac{\lambda}{2} \| \boldsymbol\rho_{k+1}\|_2^2 \; = \\
        & \lambda \left( \mathbf{b}^\ast - \mathbf{b}_{k}\right)^T \left(\mathbf{b}_{k+1} - \mathbf{b}_{k}\right)- \frac{\lambda}{2} \| \boldsymbol\rho_{k+1}\|_2^2  - \frac{\lambda}{2} \left(\mathbf{b}_{k+1} - \mathbf{b}_{k}\right)^T\left(\mathbf{b}_{k+1} - \mathbf{b}_{k}\right)\; = \\
        &  - \lambda \left( \mathbf{b}_{k} - \mathbf{b}^\ast \right)^T \left[\left(\mathbf{b}_{k+1} - \mathbf{b}^\ast\right)- \left(\mathbf{b}_{k} - \mathbf{b}^\ast\right)\right] - \frac{\lambda}{2} \| \boldsymbol\rho_{k+1}\|_2^2 - \\
        & \frac{\lambda}{2} \left[\left(\mathbf{b}_{k+1} - \mathbf{b}^\ast\right)- \left(\mathbf{b}_{k} - \mathbf{b}^\ast\right)\right]^T\left[\left(\mathbf{b}_{k+1} - \mathbf{b}^\ast\right)- \left(\mathbf{b}_{k} - \mathbf{b}^\ast\right)\right]\; = \\
        & \frac{\lambda}{2}\|\mathbf{b}_k - \mathbf{b}^\ast\|_2^2 - \frac{\lambda}{2}\|\mathbf{b}_{k+1} - \mathbf{b}^\ast\|_2^2 - \frac{\lambda}{2} \| \boldsymbol\rho_{k+1}\|_2^2. 
\end{aligned}
\label{eq:first}
\end{equation}
Substituting (\ref{eq:first}) into (\ref{eq:upper1}), we obtain 
\begin{equation}
\begin{aligned}
                   0 \; \le \;   &  \frac{\lambda}{2}\|\mathbf{b}_k - \mathbf{b}^\ast\|_2^2 - \frac{\lambda}{2}\|\mathbf{b}_{k+1} - \mathbf{b}^\ast\|_2^2 - \frac{\lambda}{2} \| \boldsymbol\rho_{k+1}\|_2^2\; - \; \lambda \left( \mathbf{z}_{k} - \mathbf{z}_{k+1}\right)^T\boldsymbol\rho_{k+1} \; + \\
                    & \lambda \left( \mathbf{z}_{k} - \mathbf{z}_{k+1}\right)^T\left(\mathbf{z}_{k+1} - \mathbf{z}^\ast\right)\; = \\
                    &  \frac{\lambda}{2}\|\mathbf{b}_k - \mathbf{b}^\ast\|_2^2 - \frac{\lambda}{2}\|\mathbf{b}_{k+1} - \mathbf{b}^\ast\|_2^2 - \frac{\lambda}{2} \| \boldsymbol\rho_{k+1}\|_2^2\; - \;\lambda \left( \mathbf{z}_{k} - \mathbf{z}_{k+1}\right)^T\boldsymbol\rho_{k+1} \; + \\
                    & \lambda \left( \mathbf{z}_{k} - \mathbf{z}_{k+1}\right)^T\left[\left(\mathbf{z}_{k+1} - \mathbf{z}_k\right) + \left(\mathbf{z}_k - \mathbf{z}^\ast\right)\right]\; =  \\
            &  \frac{\lambda}{2}\|\mathbf{b}_k - \mathbf{b}^\ast\|_2^2 - \frac{\lambda}{2}\|\mathbf{b}_{k+1} - \mathbf{b}^\ast\|_2^2 - \frac{\lambda}{2} \| \boldsymbol\rho_{k+1}\|_2^2\; - \;\lambda \left( \mathbf{z}_{k} - \mathbf{z}_{k+1}\right)^T\boldsymbol\rho_{k+1} \; - \\
            & \lambda \left( \mathbf{z}_{k} - \mathbf{z}_{k+1}\right)^T \left(\mathbf{z}_{k} - \mathbf{z}_{k+1}\right) +  \lambda \left( \mathbf{z}_{k} - \mathbf{z}_{k+1}\right)^T \left(\mathbf{z}_{k} - \mathbf{z}^\ast\right)\; =  \\
          &  \frac{\lambda}{2}\|\mathbf{b}_k - \mathbf{b}^\ast\|_2^2 - \frac{\lambda}{2}\|\mathbf{b}_{k+1} - \mathbf{b}^\ast\|_2^2 - \frac{\lambda}{2}\left(\mathbf{z}_k-\mathbf{z}_{k+1} + \boldsymbol\rho_{k+1}\right)^T\left(\mathbf{z}_k-\mathbf{z}_{k+1} + \boldsymbol\rho_{k+1}\right)\; - \\
          &  \frac{\lambda}{2}\|\mathbf{z}_k-\mathbf{z}_{k+1} \|_2^2  + \lambda \left( \mathbf{z}_{k} - \mathbf{z}_{k+1}\right)^T \left(\mathbf{z}_{k} - \mathbf{z}^\ast\right)\; =\\
& \frac{\lambda}{2}\|\mathbf{b}_k - \mathbf{b}^\ast\|_2^2 - \frac{\lambda}{2}\|\mathbf{b}_{k+1} - \mathbf{b}^\ast\|_2^2 - \frac{\lambda}{2} \|\mathbf{z}_k-\mathbf{z}_{k+1} + \boldsymbol\rho_{k+1}\|_2^2 -\frac{\lambda}{2}\|\mathbf{z}_k-\mathbf{z}_{k+1} \|_2^2\;  + \\
         & \lambda \left[ \left(\mathbf{z}_{k} - \mathbf{z}^\ast\right) - \left(\mathbf{z}_{k+1}-\mathbf{z}^\ast\right)\right]^T \left(\mathbf{z}_{k} - \mathbf{z}^\ast\right)\; = \\
& \frac{\lambda}{2}\|\mathbf{b}_k - \mathbf{b}^\ast\|_2^2 - \frac{\lambda}{2}\|\mathbf{b}_{k+1} - \mathbf{b}^\ast\|_2^2 - \frac{\lambda}{2} \|\mathbf{z}_k-\mathbf{z}_{k+1} + \boldsymbol\rho_{k+1}\|_2^2\; - \\
       & \frac{\lambda}{2}\|\left(\mathbf{z}_k-\mathbf{z}^\ast\right) - \left(\mathbf{z}_{k+1}-\mathbf{z}^\ast\right) \|_2^2  + \lambda \left[ \left(\mathbf{z}_{k} - \mathbf{z}^\ast\right) - \left(\mathbf{z}_{k+1}-\mathbf{z}^\ast\right)\right]^T \left(\mathbf{z}_{k} - \mathbf{z}^\ast\right)\; =\\
 &  \frac{\lambda}{2}\|\mathbf{b}_k - \mathbf{b}^\ast\|_2^2 - \frac{\lambda}{2}\|\mathbf{b}_{k+1} - \mathbf{b}^\ast\|_2^2 - \frac{\lambda}{2} \|\mathbf{z}_k-\mathbf{z}_{k+1} + \boldsymbol\rho_{k+1}\|_2^2\; - \\
& \frac{\lambda}{2}\|\mathbf{z}_{k+1}-\mathbf{z}^\ast\|_2^2 +  \frac{\lambda}{2}\|\mathbf{z}_{k}-\mathbf{z}^\ast\|_2^2, 
\end{aligned}
\label{eq:upper2}
\end{equation}
yielding
\begin{equation}
\begin{aligned}
        & \frac{\lambda}{2} \|\mathbf{z}_k-\mathbf{z}_{k+1} + \boldsymbol\rho_{k+1}\|_2^2\; \le \\
& \frac{\lambda}{2}\left(\|\mathbf{z}_{k}-\mathbf{z}^\ast\|_2^2 + \|\mathbf{b}_k - \mathbf{b}^\ast\|_2^2 \right)-\frac{\lambda}{2}\left(\|\mathbf{z}_{k+1}-\mathbf{z}^\ast\|_2^2 + \|\mathbf{b}_{k+1} - \mathbf{b}^\ast\|_2^2\right).
\end{aligned}
\label{eq:upper3}
\end{equation}
Expanding the left-hand side of (\ref{eq:upper3}), we obtain
\begin{equation}
        \begin{aligned}
                & \frac{\lambda}{2} \left( \|\mathbf{z}_k-\mathbf{z}_{k+1} \|_2^2 + 2\left(\mathbf{z}_k-\mathbf{z}_{k+1}\right)^T\boldsymbol\rho_{k+1}+ \|\boldsymbol\rho_{k+1}\|_2^2\right)\; \le \\         
& \frac{\lambda}{2}\left(\|\mathbf{z}_{k}-\mathbf{z}^\ast\|_2^2 + \|\mathbf{b}_k - \mathbf{b}^\ast\|_2^2 \right)-\frac{\lambda}{2}\left(\|\mathbf{z}_{k+1}-\mathbf{z}^\ast\|_2^2 + \|\mathbf{b}_{k+1} - \mathbf{b}^\ast\|_2^2\right).
        \end{aligned}
        \label{eq:upper4}
\end{equation}
Let us prove that the middle term in the left-hand side of (\ref{eq:upper4}) is non-negatve,

\[ 0\; \le \; \left(\mathbf{z}_{k} - \mathbf{z}_{k+1}\right)^T\boldsymbol \rho_{k+1} \; = \; \left(\mathbf{z}_{k} - \mathbf{z}_{k+1}\right)^T \left(\mathbf{b}_{k+1} - \mathbf{b}_{k}\right)\]

where we used step 6 of Algorithm \ref{alg:aug}. Indeed, since $\mathbf{z}_{k+1}$ minimizes (\ref{eq:obj}) with $\mathbf{u}=\mathbf{u}_{k+1}$, using the convexity of $L_1$ norm, we have for $\mathbf{z}=\mathbf{z}_{k+1}$,  
\begin{equation}
        \begin{aligned}
                & \partial_z \frac{\lambda}{2}\|\mathbf{z}-\mathbf{B}\mathbf{u}_{k+1}+\mathbf{b}_{k}\|_2^2=\lambda\left(\mathbf{z}-\mathbf{B}\mathbf{u}_{k+1}+\mathbf{b}_{k}\right) \in -\partial \|\mathbf{z}\|_1\; \Rightarrow \\
                & \|\mathbf{z}_{k+1}\|_1 - \|\mathbf{z}_{k}\|_1 \; \le \;  \left(\mathbf{z}_{k}-\mathbf{z}_{k+1}\right)^T\left(\mathbf{z}_{k+1}-\mathbf{B}\mathbf{u}_{k+1}+\mathbf{b}_{k}\right)= \left(\mathbf{z}_{k}-\mathbf{z}_{k+1}\right)^T\mathbf{b}_{k+1}.
        \end{aligned}
        \label{eq:upper5}
\end{equation}
Similarly, since $\mathbf{z}_{k}$ minimizes (\ref{eq:obj}) for $\mathbf{u}=\mathbf{u}_k$ and $\mathbf{b}=\mathbf{b}_{k-1}$, for $\mathbf{z}=\mathbf{z}_{k}$ we have
\begin{equation}
        \begin{aligned}
                & \partial_z \frac{\lambda}{2}\|\mathbf{z}-\mathbf{B}\mathbf{u}_{k}+\mathbf{b}_{k-1}\|_2^2=\lambda\left(\mathbf{z}-\mathbf{B}\mathbf{u}_{k}+\mathbf{b}_{k-1}\right) \in -\partial \|\mathbf{z}\|_1\; \Rightarrow \\
                & \|\mathbf{z}_{k}\|_1 - \|\mathbf{z}_{k+1}\|_1\; \le \;  \left(\mathbf{z}_{k+1}-\mathbf{z}_{k}\right)^T\left(\mathbf{z}_{k}-\mathbf{B}\mathbf{u}_{k}+\mathbf{b}_{k-1}\right)= \left(\mathbf{z}_{k+1}-\mathbf{z}_{k}\right)^T\mathbf{b}_{k}.
        \end{aligned}
        \label{eq:upper6}
\end{equation}
In both (\ref{eq:upper5}) and (\ref{eq:upper6}) we used step 6 of Algorithm \ref{alg:aug} and the fact that for any convex function $f(\mathbf{x})$
\[ f(\mathbf{x}_0) + \boldsymbol \xi^T \left(\mathbf{x}-\mathbf{x}_0\right) \; \le \; f(\mathbf{x})\; \Leftrightarrow \; f(\mathbf{x}_0)  - f(\mathbf{x}) \; \le \; -\boldsymbol \xi^T \left(\mathbf{x}-\mathbf{x}_0\right),\;   \text{ if }  \boldsymbol \xi\in \partial f(\mathbf{x}_0), \]
where $\partial$ is subgradient \cite{Rockafellar1}. Summing (\ref{eq:upper5}) and (\ref{eq:upper6}) we get
\begin{equation}
                0\; \le \; \left(\mathbf{z}_{k}-\mathbf{z}_{k+1}\right)^T\left(\mathbf{b}_{k+1}-\mathbf{b}_{k}\right).
        \label{eq:upper7}
\end{equation}
From (\ref{eq:upper7}) and (\ref{eq:upper4}), we have
\begin{equation}
        \begin{aligned}
                &  \|\mathbf{z}_k-\mathbf{z}_{k+1} \|_2^2 +  \|\boldsymbol\rho_{k+1}\|_2^2\; \le \\         
& \left(\|\mathbf{z}_{k}-\mathbf{z}^\ast\|_2^2 + \|\mathbf{b}_k - \mathbf{b}^\ast\|_2^2 \right)-\left(\|\mathbf{z}_{k+1}-\mathbf{z}^\ast\|_2^2 + \|\mathbf{b}_{k+1} - \mathbf{b}^\ast\|_2^2\right),
        \end{aligned}
        \label{eq:upper8}
\end{equation}
or
\begin{equation}
        \begin{aligned}
                & \|\mathbf{z}_{k+1}-\mathbf{z}^\ast\|_2^2 + \|\mathbf{b}_{k+1} - \mathbf{b}^\ast\|_2^2 \; \le \\
                & \|\mathbf{z}_{k}-\mathbf{z}^\ast\|_2^2 + \|\mathbf{b}_k - \mathbf{b}^\ast\|_2^2 - \|\mathbf{z}_{k+1}-\mathbf{z}_{k}\|_2^2 -  \|\boldsymbol\rho_{k+1}\|_2^2.
        \end{aligned}
        \label{eq:upper9}
\end{equation}
From (\ref{eq:upper9}) we can see that the sequence $\|\mathbf{z}_{k}-\mathbf{z}^\ast\|_2^2 + \|\mathbf{b}_k - \mathbf{b}^\ast\|_2^2$ and consequently $\mathbf{z}_k$ and $\mathbf{b}_k$ are bounded. Summing (\ref{eq:upper8}) for $k=0,1,\ldots,\infty$, we obtain convergence of the series
\begin{equation}
        \begin{aligned}
                \sum_{k=0}^\infty { \left\{  \|\mathbf{z}_k-\mathbf{z}_{k+1} \|_2^2 +  \|\boldsymbol\rho_{k+1}\|_2^2 \right \}} \; \le \;\|\mathbf{z}_{0}-\mathbf{z}^\ast\|_2^2 + \|\mathbf{b}_0 - \mathbf{b}^\ast\|_2^2.
        \end{aligned}
        \label{eq:upper10}
\end{equation}
From (\ref{eq:upper10}) follows
\begin{equation}
        \mathbf{z}_k-\mathbf{z}_{k+1} \to 0,\;\;\mathbf{z}_{k}-\mathbf{B}\mathbf{u}_{k}\to0,\;k\to \infty.
        \label{eq:lim2}
\end{equation}
Now using (\ref{eq:upperest}) we obtain
\begin{equation}
        \begin{aligned}
                p_{k+1} \; -\;  p^\ast  \; & \le \;\lambda \mathbf{b}_{k+1}^T\left(\mathbf{z}^\ast - \mathbf{z}_{k+1}\right) + \lambda \left(\mathbf{z}_k-\mathbf{z}_{k+1} + \mathbf{b}_{k+1}\right)^T\mathbf{B}\left(\mathbf{u}_{k+1}-\mathbf{u}^\ast\right)\;=\\
   & \lambda \mathbf{b}^T_{k+1}\left(\mathbf{z}_k-\mathbf{z}_{k+1}\right) + \lambda \mathbf{b}^T_{k+1}\left(\mathbf{z}^\ast-\mathbf{z}_k\right)\; + \\
   & \lambda \left(\mathbf{z}_k - \mathbf{z}_{k+1}\right)^T\mathbf{B}\left(\mathbf{u}_{k+1}-\mathbf{u}^\ast\right) + \lambda \mathbf{b}^T_{k+1}\mathbf{B}\left(\mathbf{u}_{k+1}-\mathbf{u}^\ast\right)\;=\\
   & \lambda \mathbf{b}^T_{k+1}\left(\mathbf{z}_k-\mathbf{z}_{k+1}\right)\; + \;\lambda \left(\mathbf{z}_k - \mathbf{z}_{k+1}\right)^T\mathbf{B}\left(\mathbf{u}_{k+1}- \mathbf{u}^\ast\right) \; + \\
   & \lambda \mathbf{b}^T_{k+1}\left(\mathbf{z}^\ast-\mathbf{z}_k\right)\; + \; \lambda \mathbf{b}^T_{k+1}\mathbf{B}\left(\mathbf{u}_{k+1}-\mathbf{u}^\ast\right)\;=\\
   & \lambda \mathbf{b}^T_{k+1}\left(\mathbf{z}_k-\mathbf{z}_{k+1}\right)\; + \;\lambda \left(\mathbf{z}_k - \mathbf{z}_{k+1}\right)^T\mathbf{B}\left(\mathbf{u}_{k+1}- \mathbf{u}^\ast\right) \; + \\
   & \lambda \mathbf{b}^T_{k+1}\left(\mathbf{B}\mathbf{u}_{k+1}-\mathbf{z}_{k+1} + \mathbf{z}_{k+1} - \mathbf{z}_{k} + \mathbf{z}^\ast - \mathbf{B}\mathbf{u}^\ast\right)\;\to\; 0,\; k \to\infty,
        \end{aligned}
        \label{eq:upperest2}
\end{equation}
where the right-hand side of (\ref{eq:upperest2}) converges to zero because of (\ref{eq:lim2}), boundedness of $\mathbf{z}_k$ and $\mathbf{b}_k$ and $\mathbf{z}^\ast=\mathbf{B}\mathbf{u}^\ast$.
Likewise, from (\ref{eq:lowerest}) we have
\begin{equation}
        \begin{aligned}
                p^\ast - p_{k+1} \;\le\; \lambda \mathbf{b}^{\ast T}\left(\mathbf{z}_{k+1}-\mathbf{B}\mathbf{u}_{k+1}\right)\; \to\; 0,\; k\to \infty.
                \end{aligned}
                \label{eq:lowerest2}
        \end{equation}
        Combining (\ref{eq:upperest2}) and (\ref{eq:lowerest2}) we obtain $p_k\to p^\ast$---i.e., value of the objective function estimate at iteration $k$ converges to the true minimum as $k\to\infty$. From the bounded sequence $\mathbf{u}_k\in \mathbb{R}^N$ we can extract a convergent subsequence
\begin{equation}
        \mathbf{u}_{k_i}\;\to\; \mathbf{u}^{\ast \ast}.
        \label{eq:u2}
\end{equation}
Because our objective function is continuous, $\mathbf{u}^{\ast\ast}$ is a solution of (\ref{eq:opt0}) and (\ref{eq:opt2}). However, if $\mathbf{A}$ is maximum rank the objective function of (\ref{eq:opt0}) is strictly convex, hence $\mathbf{u}^\ast=\mathbf{u}^{\ast\ast}$. The sequence $\mathbf{u}_k$ must converge to $\mathbf{u}^\ast$ because otherwise we would be able to extract a subsequence convergent to a different limit and repeat the above analysis.

And finally, to prove that $\mathbf{b}_k\to\mathbf{b}^\ast$, we see that from the Karush-Kuhn-Tucker (KKT) conditions \cite{Boyd} for (\ref{eq:opt2}) we have
\begin{equation}
        \alpha\mathbf{A}\mathbf{A}^T \mathbf{u}^\ast \;=\; \mathbf{A}^T\mathbf{d}\; + \;\lambda \mathbf{B}^T \mathbf{b}^\ast.
        \label{eq:kkt1}
\end{equation}
Passing (\ref{eq:ukp2}) to limit as $k\to\infty$, using (\ref{eq:lim2}) and replacing $\mathbf{b}_{k+1}$ with a convergent subsequence as necessary, we get
\begin{equation}
        \alpha\mathbf{A}\mathbf{A}^T \mathbf{u}^\ast \;=\; \mathbf{A}^T\mathbf{d}\; + \;\lambda \mathbf{B}^T \lim \mathbf{b}_k.
        \label{eq:kkt2}
\end{equation}
Since $\mathbf{B}$ is maximum rank, $\mathrm{rank}\,\mathbf{B}=K\le N$, (\ref{eq:kkt2}) means that $\lim \mathbf{b}_k = \mathbf{b}^\ast$.
\end{proof}

Note that our our proof does not depend on the selection of starting values for $\mathbf{u}_0$, $\mathbf{z}_{0}$ and $\mathbf{b}_{0}$, and this fact will be used later on in proving the convergence of Algorithm~\ref{alg:ccd}. Before we study convergence properties of Algorithm~\ref{alg:ccd}, we prove one auxiliary result.
\begin{theorem}\label{thm:spaces}
        Algorithm \ref{alg:ccd} constructs a sequence of subspaces of $\mathbb{R}^N$ spanning expanding sets of conjugate directions,
 \begin{equation}
        \begin{aligned}
         & S_k\;=\;\mathrm{span}\,\left\{\mathbf{p}_0,\mathbf{p}_1,\ldots,\mathbf{p}_k\right\},\; k=0,1,2,\ldots\\
         & S_0 \subseteq S_1 \subseteq S_2 \subseteq \ldots \subseteq S_k \subseteq \ldots
                \label{eq:spaces}
        \end{aligned}
 \end{equation}
such that
\begin{equation}
        \lim_{k\to\infty} S_k\;=\; S\; \subseteq \; \mathbb{R}^N.
        \label{eq:S}
\end{equation}
Under the assumptions of Theorem~\ref{thm:aug}, solution of the constrained optimization problem
\begin{equation}
\begin{aligned}
& \| \mathbf{z} \|_1 \; + \; \frac{\alpha}{2}\|\mathbf{A} \mathbf{u}-\mathbf{d} \|^2_2 \;  \;\rightarrow\; \min,\\
&\mathbf{z}\;=\; \mathbf{B} \mathbf{u},\\
& \mathbf{u}\; \in\; S.
\end{aligned}
\label{eq:opt5}
\end{equation}
matches the solution of (\ref{eq:opt2}).
\end{theorem}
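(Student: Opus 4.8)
The plan is to prove the structural claim (\ref{eq:spaces})--(\ref{eq:S}) first, and then reduce the optimization claim to Theorem~\ref{thm:aug} applied to a restricted problem. The nesting $S_k \subseteq S_{k+1}$ is immediate, since $S_{k+1}$ is spanned by $\mathbf{p}_0,\ldots,\mathbf{p}_{k+1}$. For the existence of the limit $S$, I would argue that whenever a newly generated direction $\mathbf{p}_{k+1}$ is nonzero, the conjugacy relation (\ref{eq:conjdir}) together with the maximal rank of $\mathbf{F}$ in (\ref{eq:Fop}) (which forces $\mathbf{F}\mathbf{x}=\mathbf{0}$ to imply $\mathbf{x}=\mathbf{0}$) makes $\mathbf{q}_{k+1}=\mathbf{F}\mathbf{p}_{k+1}$ nonzero and orthogonal to $\mathbf{q}_0,\ldots,\mathbf{q}_k$, so $\mathbf{p}_{k+1}\notin S_k$ and $\dim S_{k+1}=\dim S_k+1$. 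Since $\dim S_k$ is a nondecreasing integer bounded by $N$, it stabilizes after finitely many steps at some $s=\dim S$; for all $k\ge k_0$ we then have $S_k=S$ and the computed $\delta_{k+1}$ vanishes (the reset branch of Algorithm~\ref{alg:ccd} is taken).

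Next I would observe that lines 4--7 of Algorithm~\ref{alg:ccd} produce $\mathbf{u}_{k+1}=\sum_{i=0}^k\tau_i\mathbf{p}_i$ as the exact least-squares minimizer of $\|\mathbf{F}\mathbf{u}-\mathbf{v}_k\|_2^2$ over $S_k$: the choice $\tau_i=\mathbf{q}_i^T\mathbf{v}_k/\delta_i$ makes the residual orthogonal to $\mathrm{span}\,\{\mathbf{q}_0,\ldots,\mathbf{q}_k\}=\mathbf{F}S_k$, which is exactly normal-equation optimality over $S_k$. Hence, for $k\ge k_0$, the $\mathbf{u}$-step minimizes over the fixed subspace $S$ exactly, and the updates of $\mathbf{u},\mathbf{z},\mathbf{b}$ coincide with one step of Algorithm~\ref{alg:aug} applied to the reduced problem obtained by writing $\mathbf{u}=\mathbf{P}\mathbf{c}$ for a fixed matrix $\mathbf{P}$ whose columns form a basis of $S$, i.e. to (\ref{eq:opt2}) with $\mathbf{A},\mathbf{B}$ replaced by $\mathbf{A}\mathbf{P},\mathbf{B}\mathbf{P}$. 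Since $\mathbf{A}$ and $\mathbf{P}$ have full column rank, so does $\mathbf{A}\mathbf{P}$, the reduced objective is strictly convex, and (\ref{eq:opt5}) has a unique solution $(\mathbf{u}_S^\ast,\mathbf{z}_S^\ast)$. Invoking Theorem~\ref{thm:aug} on the reduced problem (its proof is independent of the starting iterates, so I may start it at iteration $k_0$) yields $\mathbf{u}_k\to\mathbf{u}_S^\ast$, $\mathbf{z}_k\to\mathbf{z}_S^\ast=\mathbf{B}\mathbf{u}_S^\ast$, as well as $\mathbf{z}_{k+1}-\mathbf{z}_k\to\mathbf{0}$ and $\boldsymbol\rho_{k+1}=\mathbf{z}_{k+1}-\mathbf{B}\mathbf{u}_{k+1}\to\mathbf{0}$, exactly as in (\ref{eq:lim2}).

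It remains to show that this restricted solution is in fact the unrestricted one, and here lies the main obstacle. The delicate point is that the direction update uses $\mathbf{w}_{k+1}=\mathbf{F}^T\mathbf{r}_{k+1}$ with the \emph{new} right-hand side $\mathbf{v}_{k+1}$, whereas $\mathbf{u}_{k+1}$ was computed from $\mathbf{v}_k$. Writing $\tilde{\mathbf{w}}_{k+1}=\mathbf{F}^T(\mathbf{v}_k-\mathbf{F}\mathbf{u}_{k+1})$, optimality of $\mathbf{u}_{k+1}$ over $S$ gives $\tilde{\mathbf{w}}_{k+1}\in S^\perp$, while stabilization gives $\mathbf{w}_{k+1}\in S$ for $k\ge k_0$. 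Since $\mathbf{w}_{k+1}-\tilde{\mathbf{w}}_{k+1}=\mathbf{F}^T(\mathbf{v}_{k+1}-\mathbf{v}_k)$, taking the $S^\perp$-component of this identity and using $\mathbf{w}_{k+1}\in S$ shows that $\tilde{\mathbf{w}}_{k+1}$ equals the $S^\perp$-component of $\mathbf{F}^T(\mathbf{v}_{k+1}-\mathbf{v}_k)$. The increments $\mathbf{v}_{k+1}-\mathbf{v}_k$ tend to zero because by (\ref{eq:vk}) they are assembled from $\mathbf{z}_{k+1}-\mathbf{z}_k$ and $\mathbf{b}_{k+1}-\mathbf{b}_k=\boldsymbol\rho_{k+1}$, both of which vanish in the limit; hence $\tilde{\mathbf{w}}_{k+1}\to\mathbf{0}$ and therefore $\mathbf{w}_{k+1}=\mathbf{F}^T(\mathbf{v}_{k+1}-\mathbf{F}\mathbf{u}_{k+1})\to\mathbf{0}$. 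This says precisely that the full, unrestricted gradient of $\tfrac12\|\mathbf{F}\mathbf{u}-\mathbf{v}_{k+1}\|_2^2$ vanishes in the limit, i.e. that $\mathbf{u}_S^\ast$ is stationary over all of $\mathbb{R}^N$, not merely over $S$.

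Finally I would assemble the Karush--Kuhn--Tucker conditions for (\ref{eq:opt2}) at the limit. Expanding $\mathbf{w}_{k+1}\to\mathbf{0}$ with $\mathbf{F}^T\mathbf{F}=\alpha\mathbf{A}^T\mathbf{A}+\lambda\mathbf{B}^T\mathbf{B}$ and $\boldsymbol\rho_{k+1}\to\mathbf{0}$ shows that $\lambda\mathbf{B}^T\mathbf{b}_{k+1}$ converges to $\alpha\mathbf{A}^T(\mathbf{A}\mathbf{u}_S^\ast-\mathbf{d})$; since the original $\mathbf{B}$ is maximum rank, $\mathbf{B}^T$ is injective and bounded below, so $\mathbf{b}_{k+1}$ itself converges to some $\mathbf{b}^\ast$ with $\alpha\mathbf{A}^T(\mathbf{A}\mathbf{u}_S^\ast-\mathbf{d})=\lambda\mathbf{B}^T\mathbf{b}^\ast$ (this recovers multiplier convergence even though $\mathbf{B}\mathbf{P}$ need not be maximum rank). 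The shrinkage step gives $-\lambda\mathbf{b}_{k+1}\in\partial\|\mathbf{z}_{k+1}\|_1$ at every iteration, which passes to the limit by closedness of the subdifferential to yield $-\lambda\mathbf{b}^\ast\in\partial\|\mathbf{z}_S^\ast\|_1$; combined with $\mathbf{z}_S^\ast=\mathbf{B}\mathbf{u}_S^\ast$ these are exactly the optimality conditions for (\ref{eq:opt2}) (compare (\ref{eq:kkt1})). By the uniqueness asserted in Theorem~\ref{thm:aug}, $(\mathbf{u}_S^\ast,\mathbf{z}_S^\ast)=(\mathbf{u}^\ast,\mathbf{z}^\ast)$, so the solution of (\ref{eq:opt5}) coincides with that of (\ref{eq:opt2}). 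The hard part throughout is the shifted-right-hand-side bookkeeping of the third paragraph; everything else is Theorem~\ref{thm:aug} plus elementary finite-dimensional linear algebra.
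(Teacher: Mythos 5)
Your proposal is correct, and while its first half (the nesting of the $S_k$ and their stabilization at a finite iteration, cf.\ (\ref{eq:k1})) coincides with the paper's argument, the second half takes a genuinely different route to the matching claim. The paper argues that once $S_k$ stabilizes, the right-hand side $\mathbf{F}^T\mathbf{v}_k$ of (\ref{eq:norm}) lies in $S$, concludes that steps 4--7 of Algorithm~\ref{alg:ccd} then deliver the \emph{exact unconstrained} minimizer at every subsequent iteration, and hence that Algorithm~\ref{alg:ccd} is literally Algorithm~\ref{alg:aug} from $k_1$ on; convergence and $\mathbf{u}^\ast\in S$ then follow at once from Theorem~\ref{thm:aug}. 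You instead identify the tail of Algorithm~\ref{alg:ccd} with Algorithm~\ref{alg:aug} applied to the problem \emph{restricted} to $S$ (via $\mathbf{u}=\mathbf{P}\mathbf{c}$), obtain convergence to the restricted optimum, and only then show---through the $\tilde{\mathbf{w}}_{k+1}$ versus $\mathbf{w}_{k+1}$ bookkeeping and $\mathbf{v}_{k+1}-\mathbf{v}_k\to\mathbf{0}$---that the restricted optimum is asymptotically stationary for the full problem and satisfies the KKT system of (\ref{eq:opt2}), hence equals $\mathbf{u}^\ast$ by uniqueness. The trade-off is instructive: the paper's route is shorter, but its pivotal step (that $\mathbf{F}^T\mathbf{v}_k\in S$ forces the unconstrained solution of (\ref{eq:norm}) into $S$) tacitly requires $S$ to be invariant under $\mathbf{F}^T\mathbf{F}$, which is not established; your route never claims per-iteration exactness, needs only limiting stationarity, and delivers the multiplier convergence $\mathbf{b}_k\to\mathbf{b}^\ast$ as a by-product (correctly using maximal rank of the original $\mathbf{B}$ rather than of $\mathbf{B}\mathbf{P}$), at the cost of a longer limit argument. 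Only one cosmetic slip: $\tilde{\mathbf{w}}_{k+1}$ equals \emph{minus} the $S^\perp$-component of $\mathbf{F}^T(\mathbf{v}_{k+1}-\mathbf{v}_k)$, which does not affect the conclusion that it vanishes.
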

\begin{proof} If $S = \mathbb{R}^N$ statement of the theorem is trivial, so we assume that $\mathrm{dim}\,S<N$. Since our problem is finite-dimensional, the limit (\ref{eq:S}) is achieved at a finite iteration,
\begin{equation}
\exists k_1\;        \forall k \ge k_1:\; S_k \; \equiv \; S.
\label{eq:k1}
        \end{equation}
        steps 4-7 of Algorithm~\ref{alg:ccd} are equivalent to projecting the solution of the system of normal equations (\ref{eq:norm}) onto the space $S_k$. If $p_{k+1}=0$ in steps 20-22, then the right-hand side of (\ref{eq:norm}) for any $k\ge k_1$ can be represented as a linear combination of vectors from $S_{k_1} \equiv S$. Steps 8 and 9 of Algorithm~\ref{alg:ccd} are equivalent to steps 5 and 6 of Algorithm~\ref{alg:aug}. Step 10 prepares the right-hand side of (\ref{eq:norm}) for the minimization in step 4 of Algorithm~\ref{alg:aug} for iteration $k+1$. However, since the right-hand side of (\ref{eq:norm}) is a linear combination of vectors $\mathbf{p}_0,\mathbf{p}_1,\ldots,\mathbf{p}_k$ that span $S_k\equiv S$, steps 4-7 of Algorithm~\ref{alg:ccd} are equivalent to the exact solution of the unconstrained minimization problem in step 4 of Algorithm~\ref{alg:aug}. Hence, starting from iteration $k_1$ the two algorithms become equivalent. From Theorem~\ref{thm:aug} and 
        \[\forall k\ge k_1:\; \mathbf{u}_{k+1}\;\in\; S\]
follows that the solution of (\ref{eq:opt5}) coincides with that of (\ref{eq:opt2}).
\end{proof} 

Convergence of Algorithm~\ref{alg:ccd} now becomes a trivial corollary of theorems~\ref{thm:aug} and \ref{thm:spaces}.
\begin{theorem}\label{thm:ccd} Under the assumptions of Theorem~\ref{thm:aug}, Algorithm~\ref{alg:ccd} converges to the unique solution (\ref{eq:solution0}) of problem (\ref{eq:opt2}), and (\ref{eq:conv0}) holds.
\end{theorem}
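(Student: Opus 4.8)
The plan is to obtain Theorem~\ref{thm:ccd} by gluing together the two preceding results: Theorem~\ref{thm:spaces} supplies a finite iteration index $k_1$ beyond which Algorithm~\ref{alg:ccd} is \emph{literally} an instance of the exact ADMM, and Theorem~\ref{thm:aug} then forces the tail of the sequence to converge to the solution of (\ref{eq:opt2}). First I would invoke Theorem~\ref{thm:spaces}. Since the dimensions $\dim S_k$ form a nondecreasing integer sequence bounded above by $N$, the nesting $S_0\subseteq S_1\subseteq\cdots$ must stabilize after finitely many steps, giving (\ref{eq:k1}): there is a $k_1$ with $S_k\equiv S$ for all $k\ge k_1$. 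This is the only place where finiteness of the dimension is used.

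Next I would argue that for $k\ge k_1$ the updates of Algorithm~\ref{alg:ccd} coincide with those of Algorithm~\ref{alg:aug}. Steps 8 and 9 of Algorithm~\ref{alg:ccd} reproduce steps 5 and 6 of Algorithm~\ref{alg:aug} verbatim, so only the $\mathbf{u}$-update needs attention. By Theorem~\ref{thm:spaces}, once the search directions span the stabilized subspace $S$, the right-hand side $\mathbf{F}^T\mathbf{v}_k$ of the normal equations (\ref{eq:norm}) lies in the span of $\{\mathbf{p}_0,\ldots,\mathbf{p}_k\}$, so the projection carried out in steps 4--7 returns the \emph{exact} minimizer of step 4 of Algorithm~\ref{alg:aug}, not merely its projection onto $S$. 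Consequently the triple $(\mathbf{u}_{k_1},\mathbf{z}_{k_1},\mathbf{b}_{k_1})$ may be regarded as the initial data of an exact ADMM run, and for every subsequent $k$ the two algorithms generate identical iterates.

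Finally I would apply Theorem~\ref{thm:aug} to this tail. The remark following the proof of Theorem~\ref{thm:aug} records that its conclusion is independent of the starting values $\mathbf{u}_0,\mathbf{z}_0,\mathbf{b}_0$; hence it applies with the shifted initialization at index $k_1$ and yields $\mathbf{u}_k\to\mathbf{u}^\ast$, $\mathbf{z}_k\to\mathbf{z}^\ast$, $\mathbf{b}_k\to\mathbf{b}^\ast$, which is exactly (\ref{eq:conv0}). Theorem~\ref{thm:spaces} additionally guarantees that the $S$-constrained problem (\ref{eq:opt5}) shares its solution with (\ref{eq:opt2}), so the limit produced by the tail is indeed the unique solution (\ref{eq:solution0}) rather than some spurious point of $S$.

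I expect the only real subtlety---and the step I would write most carefully---to be the claim that after stabilization the projection in steps 4--7 yields the exact unconstrained minimizer rather than its $S$-projection. This rests on the fact, established in Theorem~\ref{thm:spaces}, that once $S_k\equiv S$ the residual feeds back a right-hand side that already lies in $S$, so no component of the true solution is lost outside the span of the stored directions; everything else is bookkeeping that matches Algorithm~\ref{alg:ccd} to Algorithm~\ref{alg:aug} term by term.
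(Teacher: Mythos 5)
Your proposal is correct and follows essentially the same route as the paper: Theorem~\ref{thm:spaces} supplies the finite index $k_1$ after which Algorithm~\ref{alg:ccd} reproduces the exact ADMM updates, and Theorem~\ref{thm:aug} (whose conclusion is independent of the starting values) is then applied to the tail of the sequence. The one point you flag as subtle---that the projection in steps 4--7 returns the exact minimizer once the right-hand side lies in the stabilized span---is precisely the content already established in the proof of Theorem~\ref{thm:spaces}, so no additional argument is needed.
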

\begin{proof} In the proof of Theorem~\ref{thm:spaces} we have demonstrated that starting from $k=k_1$ defined in (\ref{eq:k1}) Algorithm~\ref{alg:ccd} is mathematically equivalent to Algorithm~\ref{alg:aug} starting from an initial approximation $\mathbf{u}_{k_1-1}$, $\mathbf{z}_{k_1-1}$ and $\mathbf{b}_{k_1-1}$. Convergence of Algorithm~\ref{alg:aug} does not depend on these starting values, hence Algorithm~\ref{alg:ccd} converges to the same unique solution as Algorithm~\ref{alg:aug} and (\ref{eq:conv0}) holds.
\end{proof}

The result of Theorem~\ref{thm:ccd} indicates that our Compressive Conjugate Directions method matches the ADMM in exact arithmetic after a finite number of iterations, while avoiding direct inversion of operator $\mathbf{A}$. This obvously means that the (worst-case) asymptotic convergence rate of Algorithm~\ref{alg:ccd} matches that of the ADMM and is $O(1/k)$ \cite{He2012}.

\section{Limited-memory Compressive Conjugate Directions Method}

Algorithm~\ref{alg:ccd} (that we call ``unlimited-memory'' Compressive Conjugate Directions Method) requires storing all of the previous conjugate directions (\ref{eq:pq}) because in step 7 the algorithm computes the expansion
        \begin{equation}
                \mathbf{u}_{k+1} \; =\; \sum_{i=0}^k \tau_i \mathbf{p}_i,
                \label{eq:exp}
        \end{equation}
of these solution approximations with respect to all conjugate direction vectors (\ref{eq:pq}) at each iteration. It is a consequence of changing right-hand sides of the normal equations system (\ref{eq:Fuvk}) that \emph{all} of the coefficients of expansion (\ref{eq:exp}) may require updating. However, in a practical implementation we may expect that only the last $m+1$ expansion coefficients (\ref{eq:exp}) significantly change, and freeze the coefficients \[\tau_i,\;i\;< \;k-m\]  at and after iteration $k$. This approach requires storing up to $2m+2$ latest vectors
\begin{equation}
        \mathbf{p}_{k},\mathbf{p}_{k-1},\ldots,\mathbf{p}_{k-m},\;\;
        \mathbf{q}_{k},\mathbf{q}_{k-1},\ldots,\mathbf{q}_{k-m}.
        \label{eq:pqm}
\end{equation}
A ``limited-memory'' variant of the method is implemented in Algorithm~\ref{alg:lmccd} that stores vectors (\ref{eq:pqm}) in a circular first-in-first-out buffer. An index variable $j$ points to the latest updated element within the buffer. Once $j$ exceed the buffer size for the first time and is reset to point to the head of the buffer, a flag variable $cycle$ is set, indicating that a search direction is overwritten at each subsequent iteration of the algorithm. The projection of the current solution iterate onto the old vector $\tau_j\mathbf{p}_j$ (now to be overwritten in the buffer) is then accumulated in a vector $\tilde{\mathbf{u}}$; the corresponding contribution to the predicted data equals $\tau_j\mathbf{q}_j$ and is accumulated in a vector $\tilde{\mathbf{v}}$, 
        \begin{equation}
        \begin{aligned}
                \tilde{\mathbf{u}} \; =\; \sum_{i=0}^{k-m-1} \tau_i \mathbf{p}_i,\;\;
                \tilde{\mathbf{v}} \; =\; \sum_{i=0}^{k-m-1} \tau_i \mathbf{q}_i.
                \label{eq:tildes}
        \end{aligned}
        \end{equation}
        Contributions (\ref{eq:tildes}) to the solution and predicted data from the discarded vectors (\ref{eq:pq}) are then added back to the approximate solution and residual in steps 8 and 12 of Algorithm~\ref{alg:lmccd}.

        \begin{algorithm}[htl]
        \caption{Limited-Memory Compressive Conjugate Directions Method for (\ref{eq:opt0})}
 \label{alg:lmccd}
        \begin{algorithmic}[1]
        \State $m\;\gets\; \text{memory size}, \; \tilde{\mathbf{u}} \;\gets\; \boldsymbol 0^N,\; \tilde{\mathbf{v}} \;\gets\; \boldsymbol 0^{N+K},\; j\gets0,\; cycle\gets.false.$
        \State $\mathbf{u}_{0} \;\gets\; \boldsymbol 0,\; \mathbf{z}_0\;\gets\; \boldsymbol 0^K;\; \mathbf{b}_0\;\gets\; \boldsymbol 0^K,\; \mathbf{v}_0\;\gets\;\begin{bmatrix}
                \sqrt{\alpha}\mathbf{d}\\
                \sqrt{\lambda}\left(\mathbf{z}_0+\mathbf{b}_0\right)
\end{bmatrix}$
\State {$\mathbf{p}_0\;\gets\; \mathbf{F}^T \mathbf{v}_0,\;\mathbf{q}_0\;\gets\;\mathbf{F}\mathbf{p}_0,\;\delta_0\;\gets\; \mathbf{q}^T_0\mathbf{q}_0$}
\For {$k=0,1,2,3,\ldots$}
    \For {$i=0,1,\ldots,\min(k,m)$}
    \State {$\tau_i\;\gets\; {\mathbf{q}_i^T \left(\mathbf{v}_{k}- \tilde{\mathbf{v}}\right)}/\delta_i$}
    \EndFor
    \State {$\mathbf{u}_{k+1}\;\gets\; \tilde{\mathbf{u}} \;+\; \sum_{i=0}^{\min(k,m)} \tau_i \mathbf{p}_i$}
    \State {$\mathbf{z}_{k+1}\;\gets\; \mathrm{shrink}\left\{\mathbf{B} \mathbf{u}_{k+1}-\mathbf{b}_k,{1}/{\lambda}\right\}$}
    \State {$\mathbf{b}_{k+1}\;\gets\; \mathbf{b}_k +\mathbf{z}_{k+1} - \mathbf{B} \mathbf{u}_{k+1}$}
    \State {$\mathbf{v}_{k+1}\;\gets\;\begin{bmatrix}
                \sqrt{\alpha}\mathbf{d}\\
                \sqrt{\lambda}\left(\mathbf{z}_{k+1}+\mathbf{b}_{k+1}\right)
\end{bmatrix}$}
\State{$\mathbf{r}_{k+1}\;\gets\;\mathbf{v}_{k+1}\;-\; \sum_{i=0}^{\min(k,m)}\tau_i  \mathbf{q}_i \;-\; \tilde{\mathbf{v}}$}
    \State {$\mathbf{w}_{k+1}\;\gets\;  \mathbf{F}^T \mathbf{r}_{k+1}$}
    \State {$\mathbf{s}_{k+1}\;\gets\;  \mathbf{F} \mathbf{w}_{k+1}$}
    \For {$i=0,1,\ldots,\min(k,m)$}
        \State {$\beta_i\;\gets\;  - {\mathbf{q}_i^T \mathbf{s}_{k+1}}/\delta_i$}
    \EndFor
    \State {$j\gets j+1$}
    \If {$j = m+1$}
      \State {$j\gets0,\; cycle\gets.true.$} 
    \EndIf
    \If {$cycle$}
    \State {$\tilde{\mathbf{u}}\;\gets\; \tilde{\mathbf{u}} \;+\; \tau_j\mathbf{p}_j $} 
    \State {$\tilde{\mathbf{v}}\;\gets\; \tilde{\mathbf{v}} \;+\; \tau_j\mathbf{q}_j $} 
    \EndIf
    \State {$\mathbf{p}_{j}\;\gets\;\sum_{i=0}^{\min(k,m)} \beta_i \mathbf{p}_i \; + \; \mathbf{w}_{k+1}$}
    \State {$\mathbf{q}_{j}\;\gets\;\sum_{i=0}^{\min(k,m)} \beta_i \mathbf{q}_i \; + \; \mathbf{s}_{k+1}$}
    \State {$\delta_{j} \;\gets\;\mathbf{q}_{j}^T \mathbf{q}_{j}$}
    \If {$\delta_{j} = 0$}
    \Comment {Use condition ``$\delta_{j}  <  \text{tolerance}$'' in practice}
         \State {$\delta_{j}\;\gets\;1,\; \mathbf{p}_{j}\;\gets\;\mathbf{0}^N,\; \mathbf{q}_{j}\;\gets\;\mathbf{0}^{M+K}$}
    \EndIf
    \State {Exit loop if ${\|\mathbf{u}_{k+1}-\mathbf{u}_k\|_2}/{\| \mathbf{u}_k \|_2} \; \le \; \text{target accuracy}$}
\EndFor
\end{algorithmic}
\end{algorithm}

\subsection{Trade-off between the number of iterations and problem condition number}
\label{subs:tradeoff}

In practical implementations of the ADMM when the operator $\mathbf{A}$ does not lend itself to direct solution methods, an iterative method can be used to solve the minimization problem in step 4 of Algorithm~\ref{alg:aug} \cite{GoldOsher09}. Algorithm~\ref{alg:rcg}, representing such an approach, uses a fixed number of iterations $N_c$ of CGNE in step 4. At each iteration of the ADMM conjugate gradients are hot-restarted from the previous solution approximation $\mathbf{u}_{k}$. For comparison purposes we will refer to this method as \emph{restarted Conjugate Gradients} or \emph{RCG}.

\begin{algorithm}
       \caption{ADMM and hot-restarted CG (\emph{RCG})}
 \label{alg:rcg}
\begin{algorithmic}[1]
        \State $\mathbf{u}_{0} \;\gets\; \boldsymbol 0^N,\; \mathbf{z}_0\;\gets\; \boldsymbol 0^K,\; \mathbf{b}_0\;\gets\; \boldsymbol 0^K,\; N_c\;\gets\; \text{prescribed number of CG iterations}$
\State {$\mathbf{p}_0\;\gets\; \mathbf{F}^T \mathbf{v}_0,\;\mathbf{q}_0\;\gets\;\mathbf{F}\mathbf{p}_0$}
\For {$k=0,1,2,3,\ldots$}
\State {Solve \[\mathbf{u}_{k+1}\;\gets\; \mathrm{argmin}\,  \left\{\frac{\lambda}{2} \| \mathbf{z}_k - \mathbf{B} \mathbf{u} + \mathbf{b}_k \|_2^2 + \frac{\alpha}{2}\| \mathbf{A} \mathbf{u} - \mathbf{d} \|_2^2\right\},\]}
\Statex {$\;\;\;\; $ starting from $\mathbf{u}_{k}$ and using $N_c$ iterations of CGNE.}
    \State {$\mathbf{z}_{k+1}\;\gets\; \mathrm{shrink}\left\{\mathbf{B} \mathbf{u}_{k+1}-\mathbf{b}_k,{1}/{\lambda}\right\}$}
    \State {$\mathbf{b}_{k+1}\;\gets\; \mathbf{b}_k + \mathbf{z}_{k+1} - \mathbf{B} \mathbf{u}_{k+1}$}
    \State {Exit loop if ${\|\mathbf{u}_{k+1}-\mathbf{u}_k\|_2}/{\| \mathbf{u}_k \|_2} \; \le \; \text{target accuracy}$}
\EndFor
\end{algorithmic}
\end{algorithm}

Note that Algorithm~\ref{alg:rcg} with $N_c=1$ performs a single step of gradient descent when solving the following intermediate least-squares minimization problem in step 4,
\begin{equation}
\begin{aligned}
        \mathbf{u}_{k+1} \;=\; \mathrm{argmin}\, \frac{\alpha}{2}\|\mathbf{A} \mathbf{u}-\mathbf{d} \|^2_2 \; +\; \frac{\lambda}{2} \| \mathbf{z}_k\;-\; \mathbf{B} \mathbf{u}+ \mathbf{b}_k \|_2^2.
\end{aligned}
\label{eq:interm}
\end{equation}
The performance of Algorithm~\ref{alg:rcg} depends on the condition number of the leasts-squares problem (\ref{eq:interm}) \cite{NLA}: for well-conditioned problems only a small number of conjugate gradients iterations $N_c$ may achieve a sufficiently accurate approximation to $\mathbf{u}_{k+1}$. The condition number of (\ref{eq:interm}) depends on properties of operators $\mathbf{A}$ and $\mathbf{B}$, as well as the value of $\lambda$. In applications with a simple modeling operator $\mathbf{A}$, such as is the case in denoising with $\mathbf{A}=\mathbf{I}$, a value of $\lambda$ may be experimentally selected so as to reduce the condition number of (\ref{eq:interm}). However, a trade-off may exist between the condition number of (\ref{eq:interm}) and the number of ADMM iterations in the outer loop (Step 3) of Algorithm~\ref{alg:aug}: well-conditioned interim least-squares problems may result in a significantly higher number of ADMM iterations. Such a trade-off is a well-known phenomenon in applications of the Augmented Lagrangian Method of Multipliers for smooth objective functions, see, e.g., \cite{GlowinskiTallec1989}. For example, large values of $\lambda$ in (\ref{eq:opt4}) more strongly penalize violations of the equality constraint, as in the Quadratic Penalty Function Method \cite{Nocedal} with a larger penalty and a more ill-conditioned quadratic minimization.  Of course, in the case of ADMM applied to (\ref{eq:opt0}), a non-smooth objective function, arbitrary and potentially ill-conditioned operator $\mathbf{A}$, and (most importantly) alternating splitting minimization of the modified Augmented Lagrangian (\ref{eq:opt4})\footnote{``modified'' because of the added constant term $\lambda /2 \|\mathbf{b}_k\|^2_2$} complicate the picture. In fact, for an arbitrary $\mathbf{A}$, the condition number of (\ref{eq:interm}) is not always an increasing function of $\lambda$. Some of the numerical examples described in the following subsections exhibit this trade-off between the condition number of the intermediate least-squares problem (\ref{eq:interm}) and the number of ADMM iterations: the better the condition-number of (\ref{eq:interm}), the more ADMM iterations are typically required. The main advantage of our Compressive Conjugate Directions approach implemented in Algorithms~\ref{alg:ccd} and \ref{alg:lmccd} is that information on the geometry of the objective function (\ref{eq:interm}) accumulates through \emph{external} ADMM iterations thus potentially reducing the amount of effort required to perform minimization of (\ref{eq:interm}) at each step. Since our objective is a practical implementation of the ADMM for (\ref{eq:opt0}) with computationally expensive operators $\mathbf{A}$, the overall number of operator $\mathbf{A}$ and $\mathbf{A}^T$ applications required to achieve given accuracy will be the principal benchmark for measuring the performance of various algorithms.

\section{Applications}

In this section we apply the method of Compressive Conjugate Directions to solving $L_1$ and TV-regularized inversion problems for several practical examples.

\subsection{Image Denoising}
A popular image denoising technique for removing short-wavelength random Gaussian noise from an image is based on solving (\ref{eq:opt}) with $\mathbf{A}=\mathbf{I}$. Vector $\mathbf{d}$ is populated with a noisy image, a denoised image is returned in $\mathbf{u}$,
\[\mathbf{u}=u_{i,j},\;i=1,\ldots,N_y,\;j=1,\ldots,N_x,\]
with an \emph{anisotropic TV norm} in (\ref{eq:opt}) defined by the linear gradient operator
\begin{equation}
        \nabla         \mathbf{u} \;=\; 
        \begin{bmatrix} 
                \nabla_x \mathbf{u} \\
                \nabla_y \mathbf{u} 
        \end{bmatrix} \;=\; 
        \begin{bmatrix} 
                {u}_{i,2} - {u}_{i,1} \\
                \cdots\\
                {u}_{i,N_x} - {u}_{i,N_x-1} \\
                \cdots\\
                {u}_{2,j} - {u}_{1,j} \\
                \cdots\\
                {u}_{N_y,j} - {u}_{N_y-1,j}
        \end{bmatrix},\; i=1,\ldots,N_y,\; j=1,\ldots,N_x.
        \label{eq:aniso}
\end{equation}
Here, the dimension of the model space is $N=N_x\times N_y$ with $M=N$ and $K=N- N_x - N_y$. Since operator $\mathbf{A}=\mathbf{I}$ is trivial, minimization of the number of operator applications in this problem carries no practical advantage. The only reason for providing this example is to demonstrate the stability of the proposed Compressive Conjugate Directions method with respect to choosing a value of $\lambda$.

Figure~\ref{fig:trueimg} shows the true, noise-free $382 \times 382$ image used in this experiment. Random Gaussian noise with a standard deviation $\sigma$ of $15\%$ of maximum signal amplitude was added to the true image to produce the noisy image of Figure~\ref{fig:noisyimg}. All low-wavenumber or ``blocky'' components of the noise below a quarter of the Nyquist wavenumber were filtered out, leaving only high-wavenumber ``salt-and-pepper'' noise. Parameter $\alpha=10$ was chosen experimentally based on the desired trade-off of fidelity and ``blockiness'' of the resulting denoised image. The result of solving (\ref{eq:opt}) using Algorithm~\ref{alg:rcg} with $\lambda=1$, one hundred combined applications of $\mathbf{A}$ and $\mathbf{A}^T$, and $N_c=1$ is shown in Figure~\ref{fig:rcg}. The result of applying our limited-memory Conjugate Directions Algorithm~\ref{alg:lmccd} for $m=50$ is shown in Figure~\ref{fig:ccd}\footnote{Here, this matches the results for \emph{any} memory size $m>0$ due to a well-conditioned problem (\ref{eq:interm}).}. Note that $N_c=1$ means that only a single step of Conjugate Gradients, or a single gradient descent, is made in step 4 of Algorithm~\ref{alg:rcg}. For this choice of $\lambda$, problem (\ref{eq:interm}) is very well conditioned, with a condition number of $\kappa = 1.8$. A single iteration of gradient descent achieves sufficient accuracy of minimization (\ref{eq:interm}) and for $\lambda=1$ there is no practical advantage in using our method as both methods perform equally well, see Figure~\ref{fig:e1}. In fact, the overhead of storing and using conjugate directions from previous iterations may exceed the cost of operator $\mathbf{A}$ and its adjoint applications if the latter are computationally cheap. 
\begin{figure}[htb]
  \begin{center}
          \subfigure[]{\includegraphics[width=.46\textwidth]{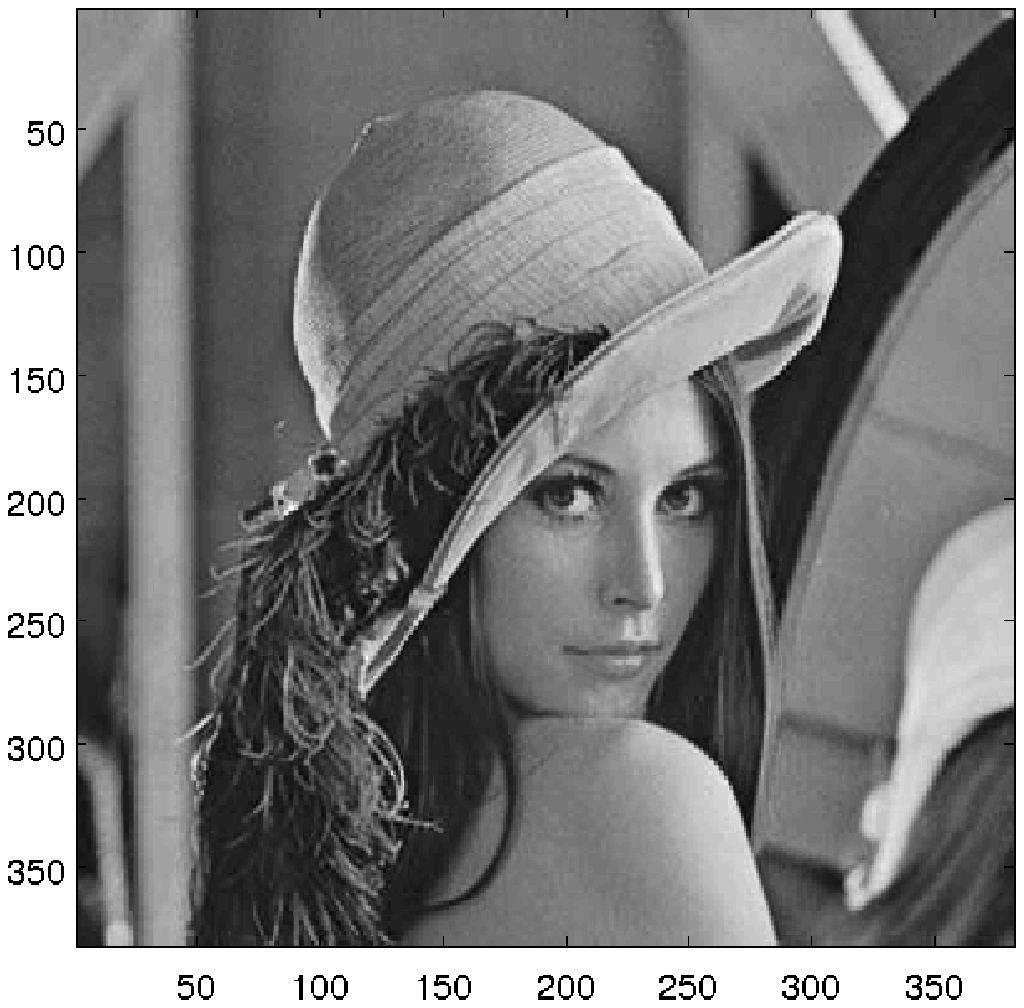}\label{fig:trueimg}}
          \quad
          \subfigure[]{\includegraphics[width=.46\textwidth]{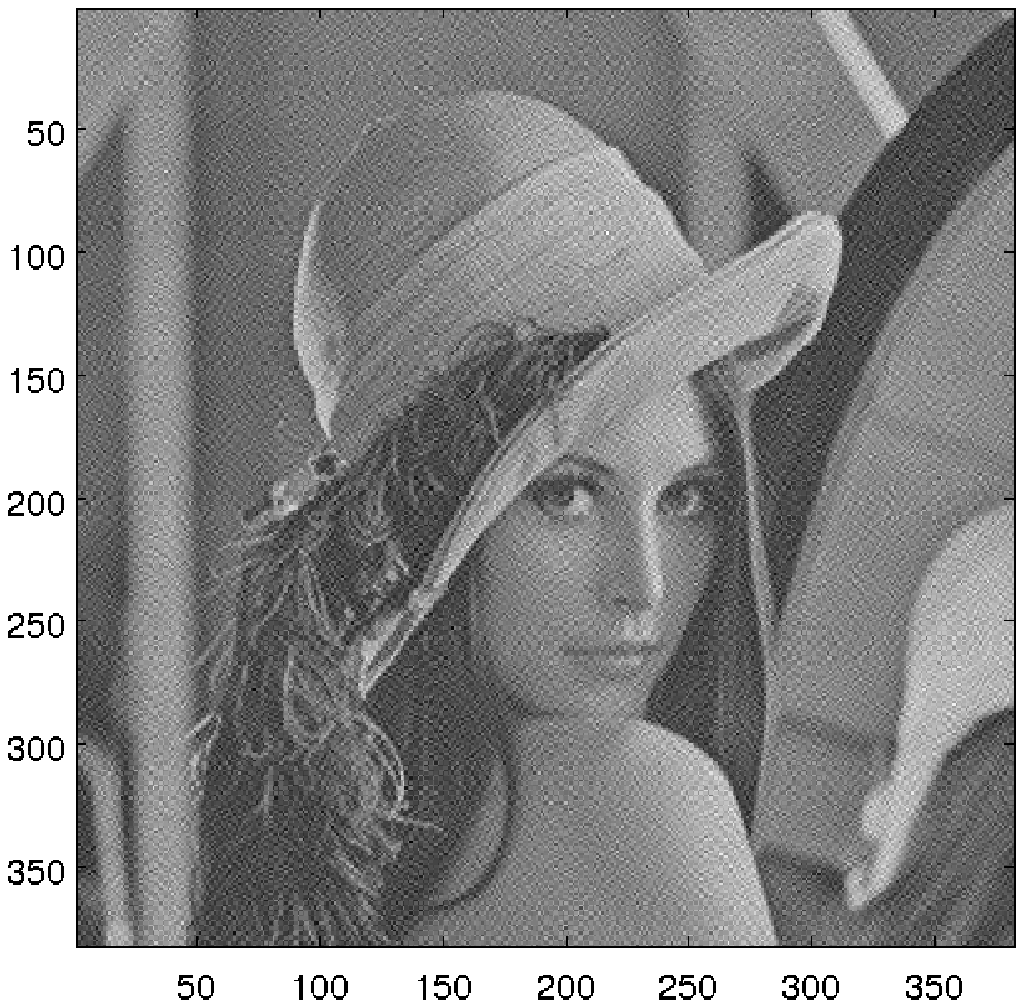}\label{fig:noisyimg}}\\
          \vspace{-.3cm}
          \subfigure[]{\includegraphics[width=.46\textwidth]{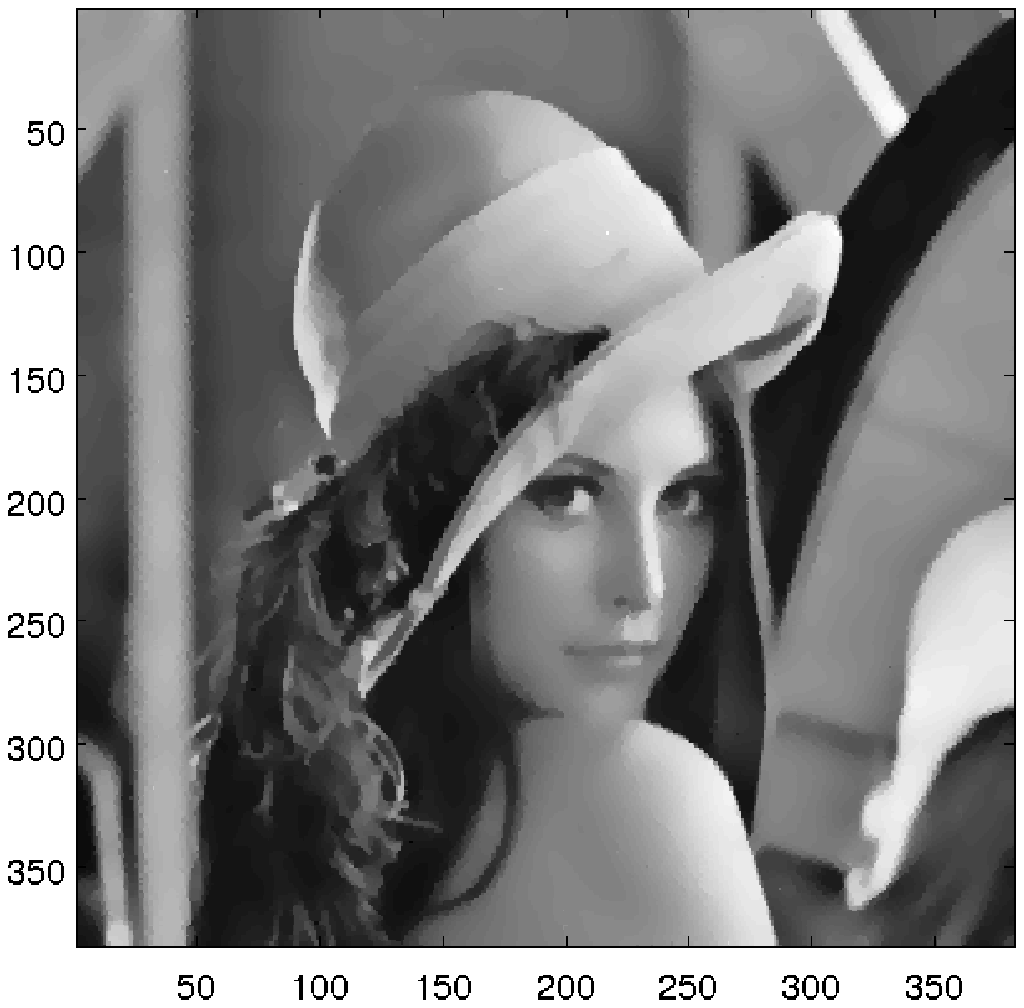}\label{fig:ccd}}
          \quad
          \subfigure[]{\includegraphics[width=.46\textwidth]{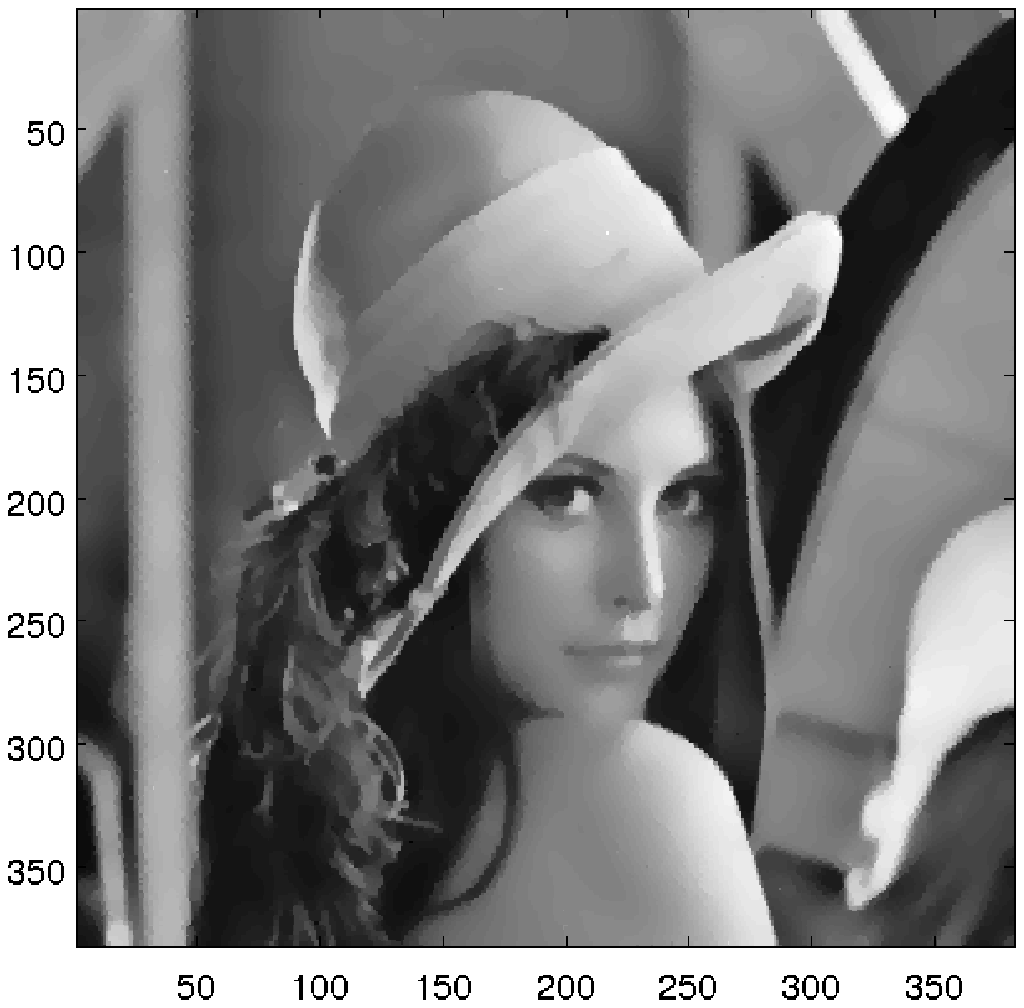}\label{fig:rcg}}
          \vspace{-.5cm}
          \caption{(a) Clean image; (b) Noisy image contaminated with Gaussian noise with $\sigma=15\%$ of maximum amplitude; (c) Image denoised using Algorithm~\ref{alg:lmccd} with $\alpha=10,\;\lambda=1$ and memory size $m=50$; (d) Image denoised using Algorithm~\ref{alg:rcg} with $\alpha=10,\;\lambda=1,\;N_c=1$.}
  \end{center}
\end{figure}
The approximation errors of applying the limited-memory Compressive Conjugate Directions Algorithm~\ref{alg:lmccd} with $m=50$ versus Algorithm~\ref{alg:rcg} with $N_c=1,5,10$ for $\lambda=10^2, 10^3, 10^4$ are shown in Figures~\ref{fig:e1},\ref{fig:e100},\ref{fig:e1000},\ref{fig:e10000}. Note that larger values of $\lambda$ result in increasingly larger condition numbers of (\ref{eq:interm}) shown on top of the plots. The performance of Algorithm~\ref{alg:rcg} here depends on a choice of $N_c$: increasing $N_c$ as required to achieve a sufficiently accurate approximate solution of (\ref{eq:interm}) results in fewer available ADMM iterations for a fixed ``budget'' of operator $\mathbf{A}$ and adjoint applications. However, Algorithm~\ref{alg:lmccd} accumulates conjugate directions (\ref{eq:pq}) computed at earlier iterations and requires only one application of the operator and its adjoint per ADMM iteration. Note that at iteration steps less than $N_c$, Algorithm~\ref{alg:rcg} may still outperform Algorithm~\ref{alg:lmccd} as it conducts more Conjugate Gradient iterations per solution of each problem (\ref{eq:interm}). However, once the ADMM iteration count exceeds the largest $N_c$, and sufficient information is accumulated by Algorithm~\ref{alg:lmccd} about the geometry of the objective function, the Compressive Conjugate Directions outperforms Algorithm~\ref{alg:rcg}.
\begin{figure}[htb]
  \begin{center}
          \subfigure[]{\includegraphics[width=.48\textwidth]{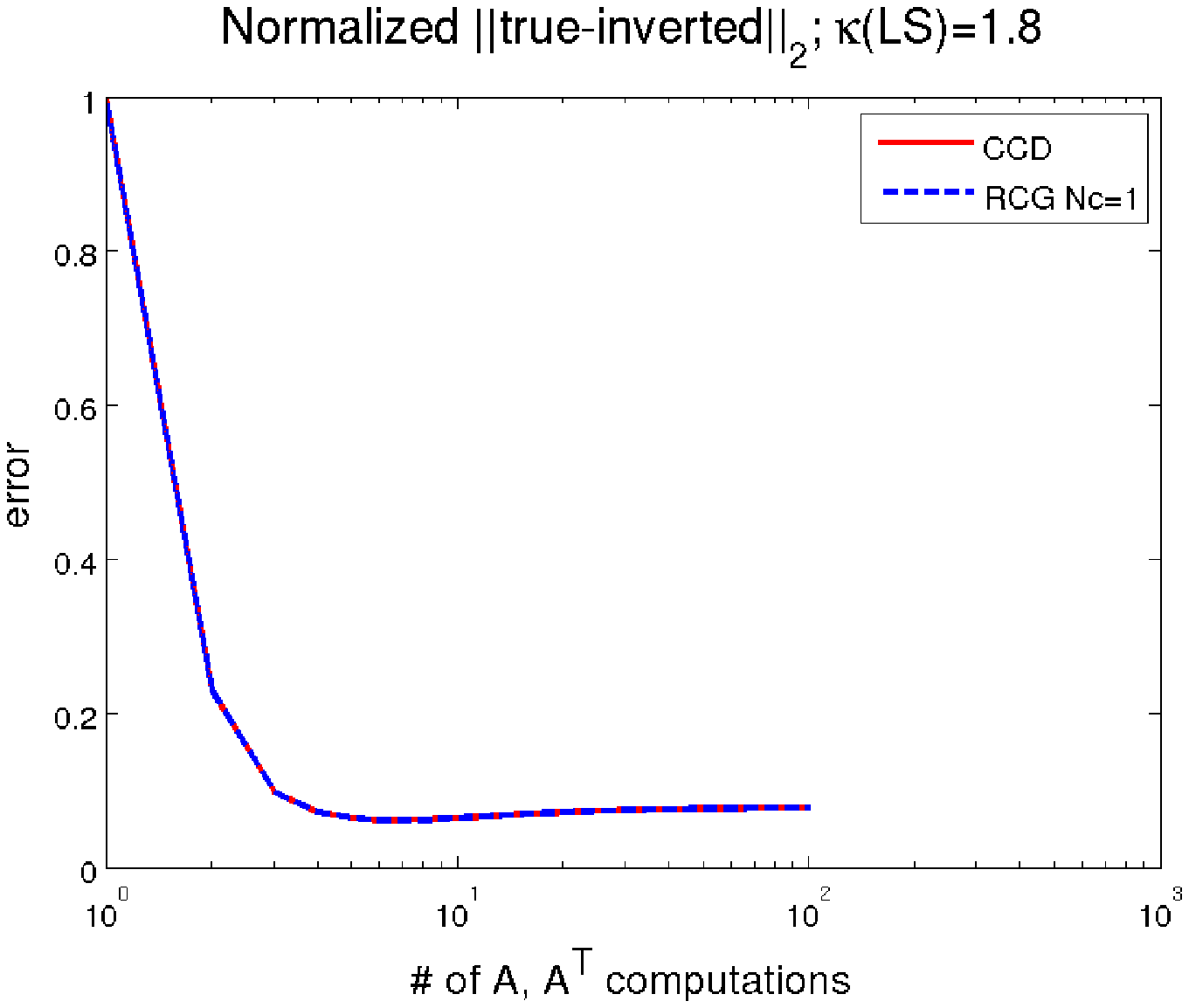}\label{fig:e1}}
          \quad
          \subfigure[]{\includegraphics[width=.48\textwidth]{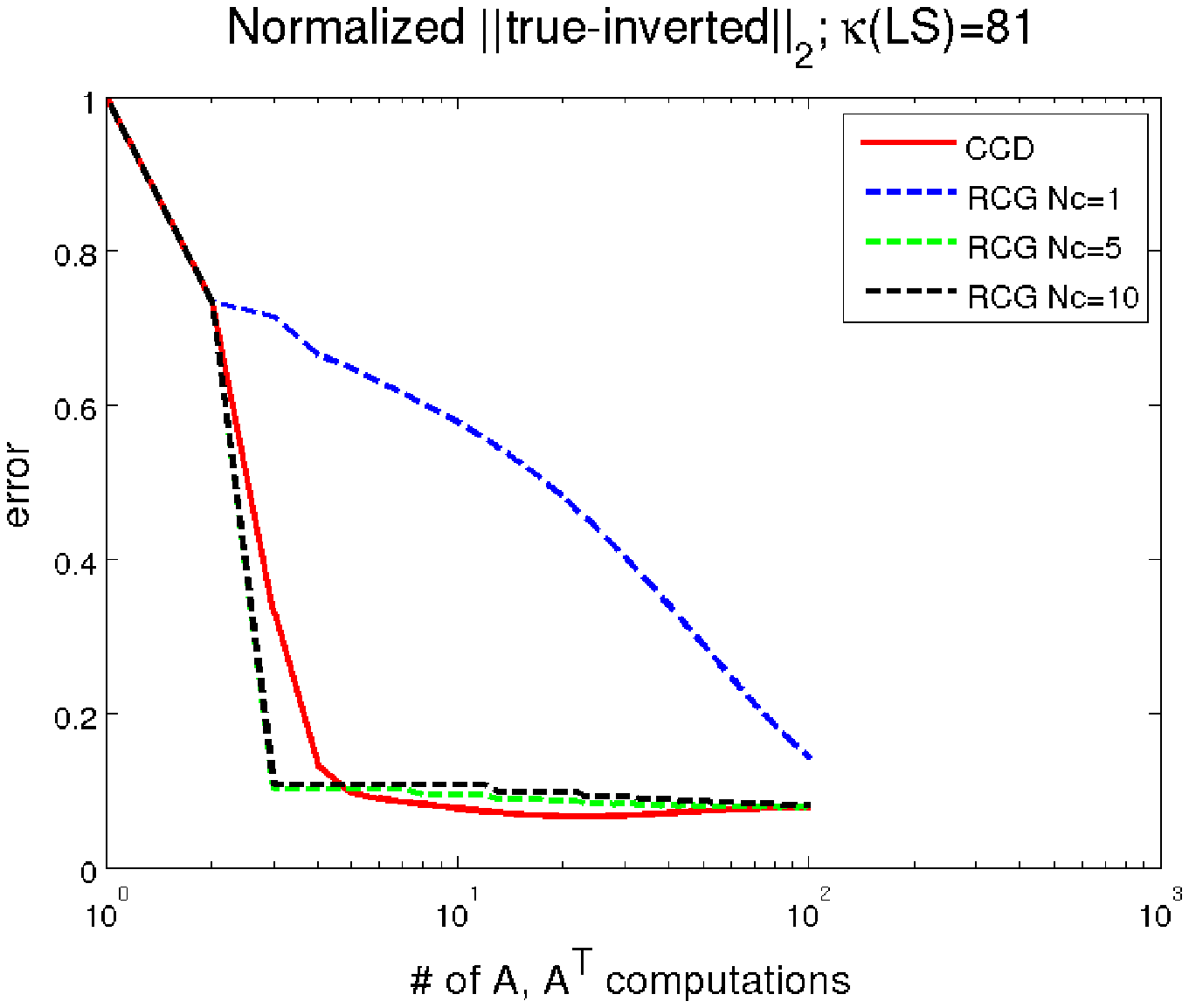}\label{fig:e100}}\\
          \vspace{-.3cm}
          \subfigure[]{\includegraphics[width=.48\textwidth]{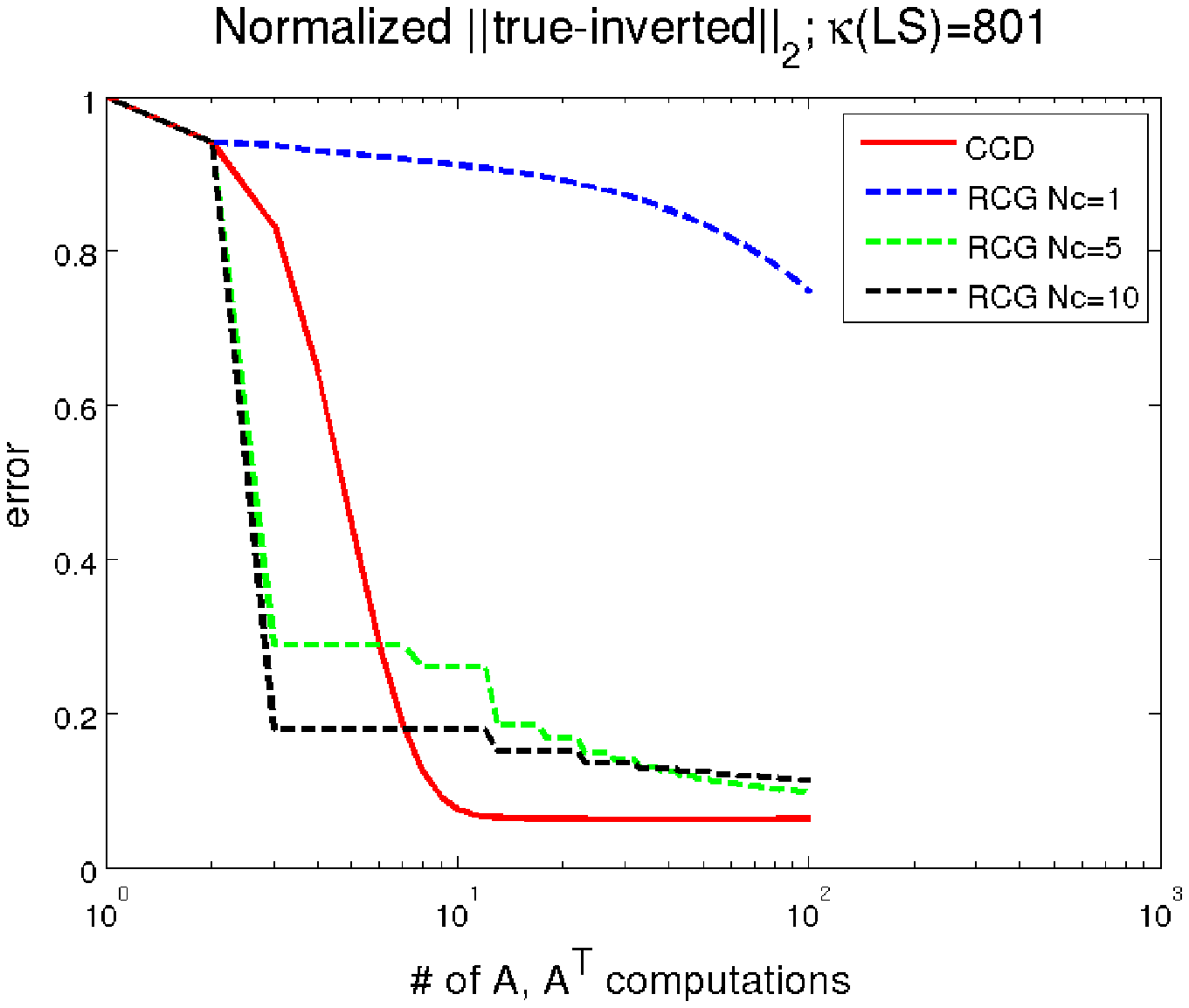}\label{fig:e1000}}
          \quad
          \subfigure[]{\includegraphics[width=.48\textwidth]{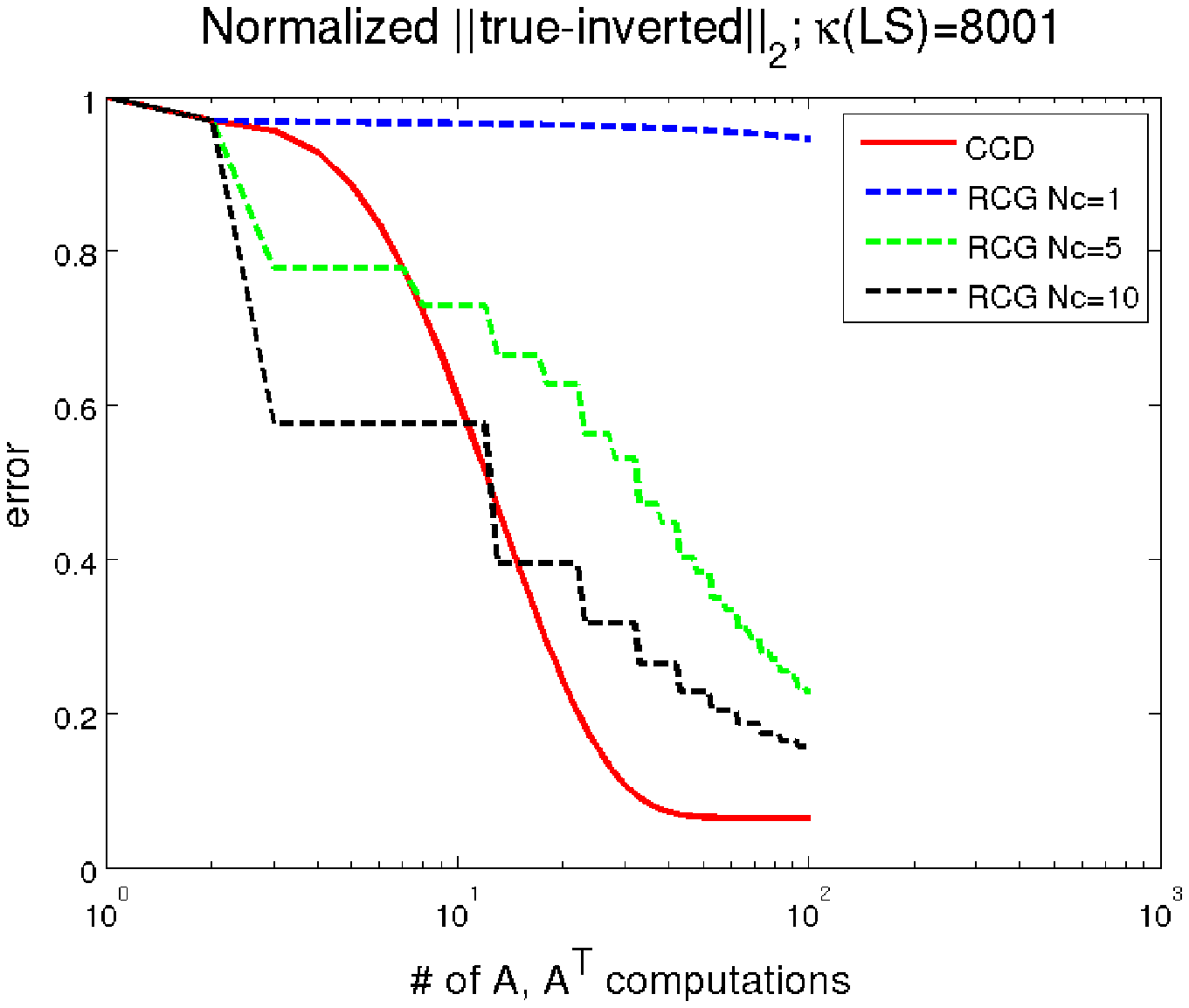}\label{fig:e10000}}
          \vspace{-.5cm}
          \caption{ Performance of Algorithm~\ref{alg:lmccd} with $m=20$ versus Algorithm~\ref{alg:rcg} with varying $N_c$ for (a) $\lambda=1$ (b) $\lambda=100$ (c) $\lambda=1000$ (d) $\lambda=10000$.}
  \end{center}
\end{figure}
Note that this example does not demonstrate the trade-off between the condition number of (\ref{eq:interm}) and the number of ADMM iterations. The reason for this is that for large $\lambda$ convergence is achieved relatively quickly within a number of iterations comparable to a number of Conjugate Gradients steps required to solve (\ref{eq:interm}). However, this example demonstrate another feature of the proposed Compressive Conjugate Directions Method: compared with a technique based on a restarted iterative solution of (\ref{eq:interm}), the method may be less sensitive to a suboptimal choice of $\lambda$.

\subsection{Inversion of Dilatational Point Pseudo-sources}

In our second example, we demonstrate our method on a geomechanical inversion problem with a non-trivial forward-modeling operator $\mathbf{A}$. Here, we are interested in inverting subsurface sources of deformation from noisy measurements of surface displacements, such as GPS, tilt-meter and InSAR observations.

 The forward modeling operator simulates vertical surface displacements in response to distributed dilatational (e.g. pressure change) sources \cite{SEGDEF}. Our modeling operator is defined as
\begin{equation}
\mathbf{A} \mathbf{u}\;=\; d(z),\; d(z)\;=\; c \int_0^A \frac{D u(\xi) d\xi  }  {\left(D^2 + (z-\xi)^2\right)^{3/2}},
\label{eq:F}
\end{equation}
where we assume that $\mathbf{u}=u(\xi), \xi\in [0,A]$ is a relative pore pressure change along a horizontal segment $[0,A]$ of a reservoir at a constant depth $D$, $\mathbf{d}=d(x),x\in [0,A]$ is the induced vertical displacement on the surface, and a factor $c$ is determined by the poroelastic medium properties, and reservoir dimensions. In this example, for demonstration purposes we consider a two-dimensional model, but a three-dimensional model is studied in subsection~\ref{subs:3d}. Operator (\ref{eq:F}) is a smoothing integral operator that, after discretization and application of a simple quadrature, can be represented by a dense matrix. Analytical representation of the surface displacement modeling operator (\ref{eq:F}) is possible for simple homogeneous media; however, modeling surface displacements in highly heterogeneous media will involve computationally expensive numerical methods such as Finite Elements \cite{Kosloff1980}.

In this experiment we seek to recover a spiky model of subsurface sources shown in Figure~\ref{fig:strue} from noisy observations of the induced surface displacements shown in Figure~\ref{fig:sdata}. 
\begin{figure}[htb]
  \begin{center}
          \subfigure[]{\includegraphics[width=.48\textwidth]{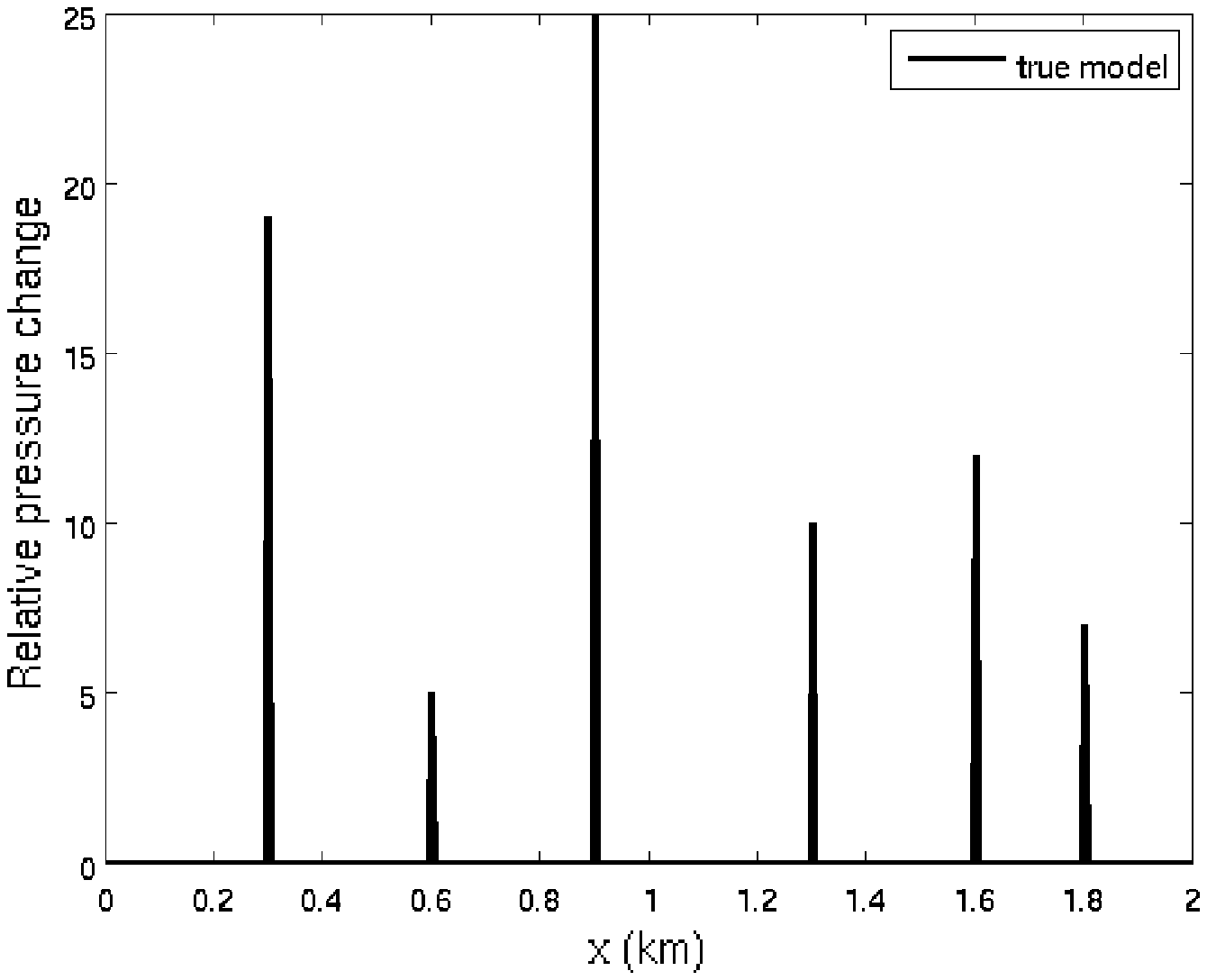}\label{fig:strue}}
          \quad
          \subfigure[]{\includegraphics[width=.48\textwidth]{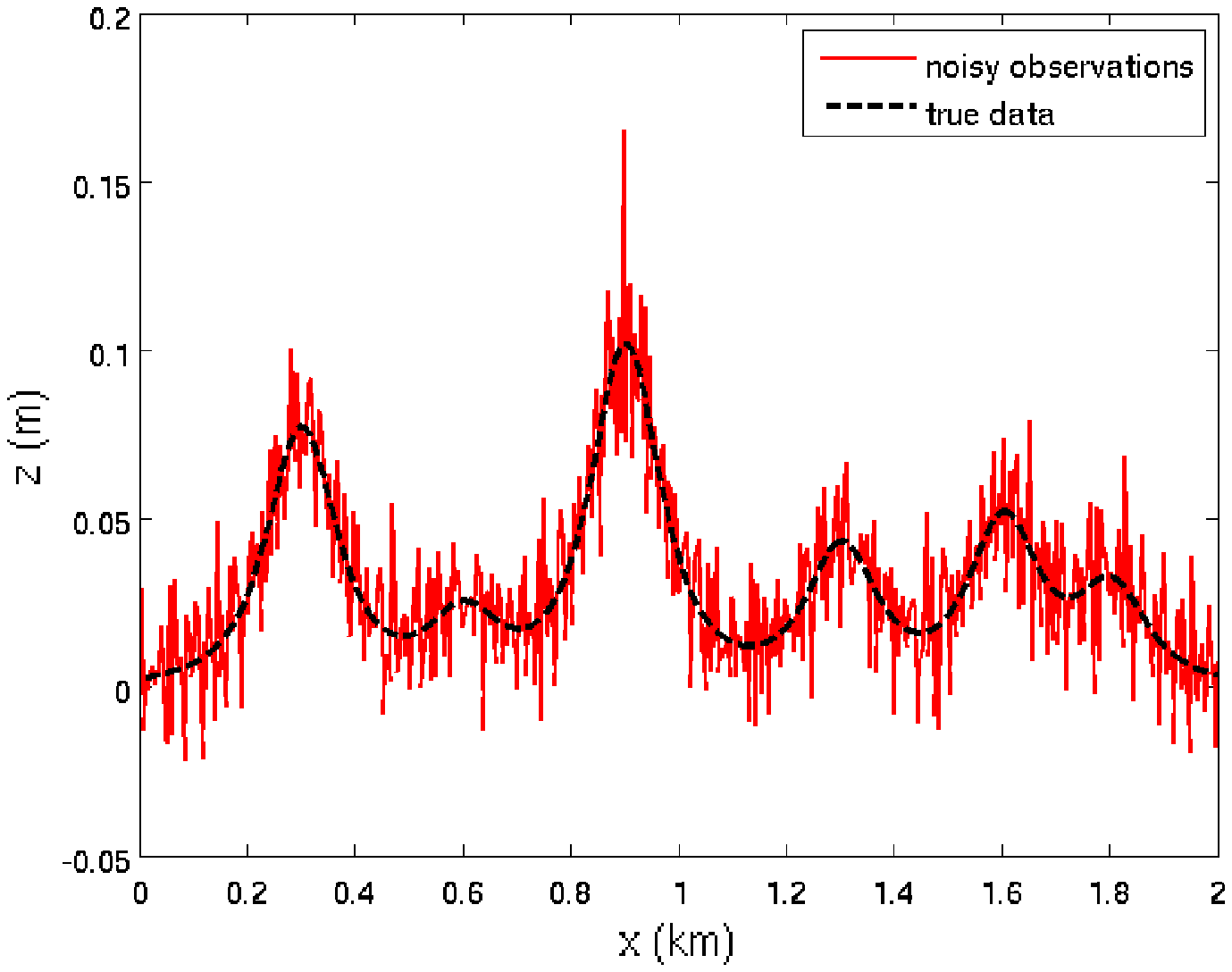}\label{fig:sdata}}
          \vspace{-.5cm}
          \caption{ (a) A spiky true pseudosources; (b) the resulting true (black) and noisy (red) surface displacements. }
  \end{center}
\end{figure}
Such sparse dilatational pseudo-sources are mathematically equivalent to concentrated reservoir pressure changes in hydrogeology and exploration geophysics, as well as expanding spherical lava chambers (the ``Mogi model'') in volcanology \cite{SEGDEF}. We forward-modeled surface displacements due to the sources of Figure~\ref{fig:strue} using operator (\ref{eq:F}), and, as in our denoising tests, added random Gaussian noise with $\sigma=15\%$ of the maximum data amplitude. Prior to adding the noise, all low-wavenumber noise components below a fifth of the Nyquist wavenumber were muted, leaving only the high-wavenumber noise shown in Figure~\ref{fig:sdata}. 

We set $D=.1$ km, $A=2$ km, $c=10^{-2}$ in (\ref{eq:F}), and discretized both the model and data space using a 500-point uniform grid, $N=M=500$. We solve problem (\ref{eq:opt1}) with $\alpha=10000$, and our objective is to accurately identify locations of the spikes in Figure~\ref{fig:strue} and their relative magnitudes, carrying out as few applications of operator (\ref{eq:F}) as possible. 
\begin{figure}[htb]
  \begin{center}
          \subfigure[]{\includegraphics[width=.48\textwidth]{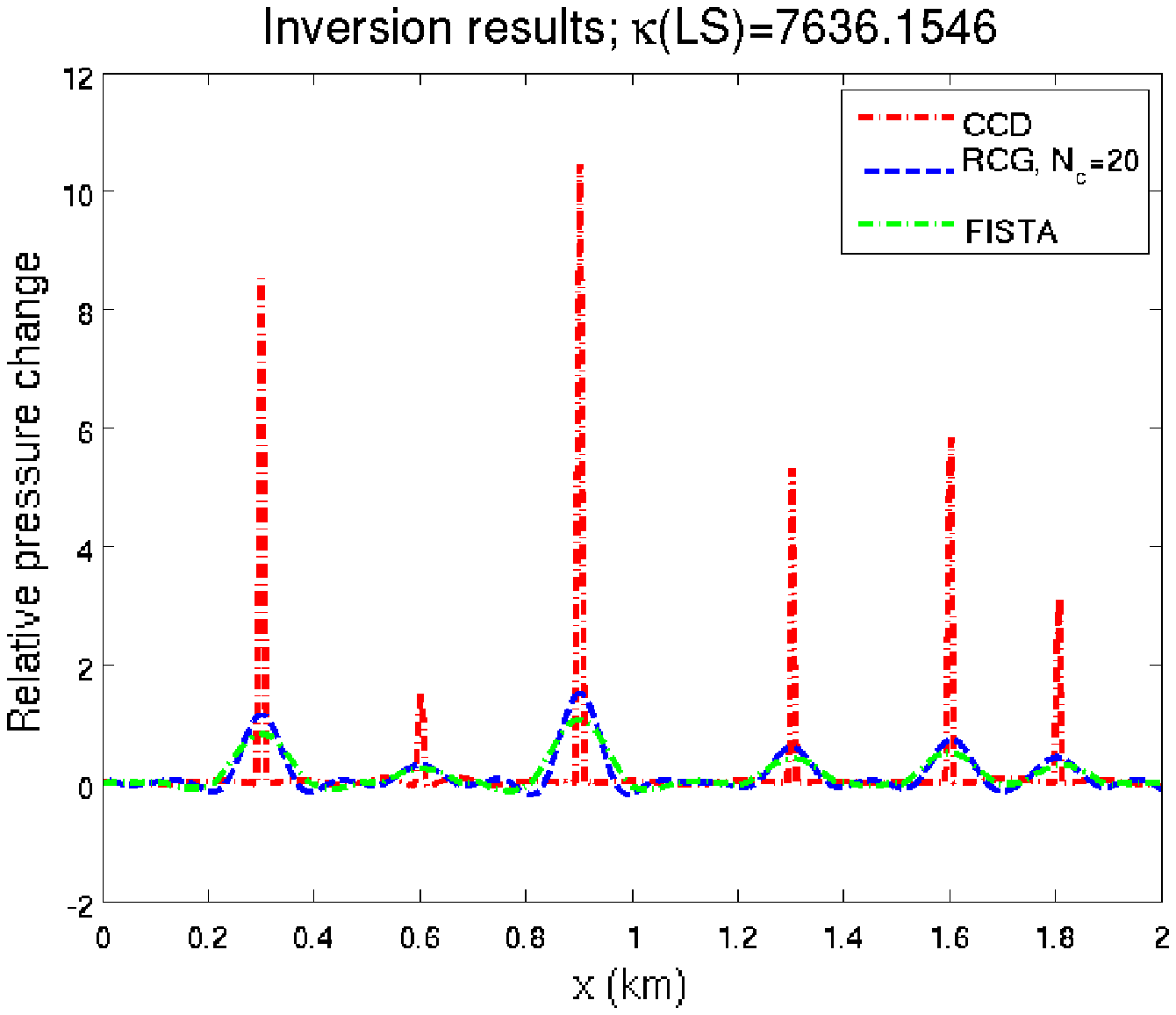}\label{fig:s005inv}}
          \subfigure[]{\includegraphics[width=.48\textwidth]{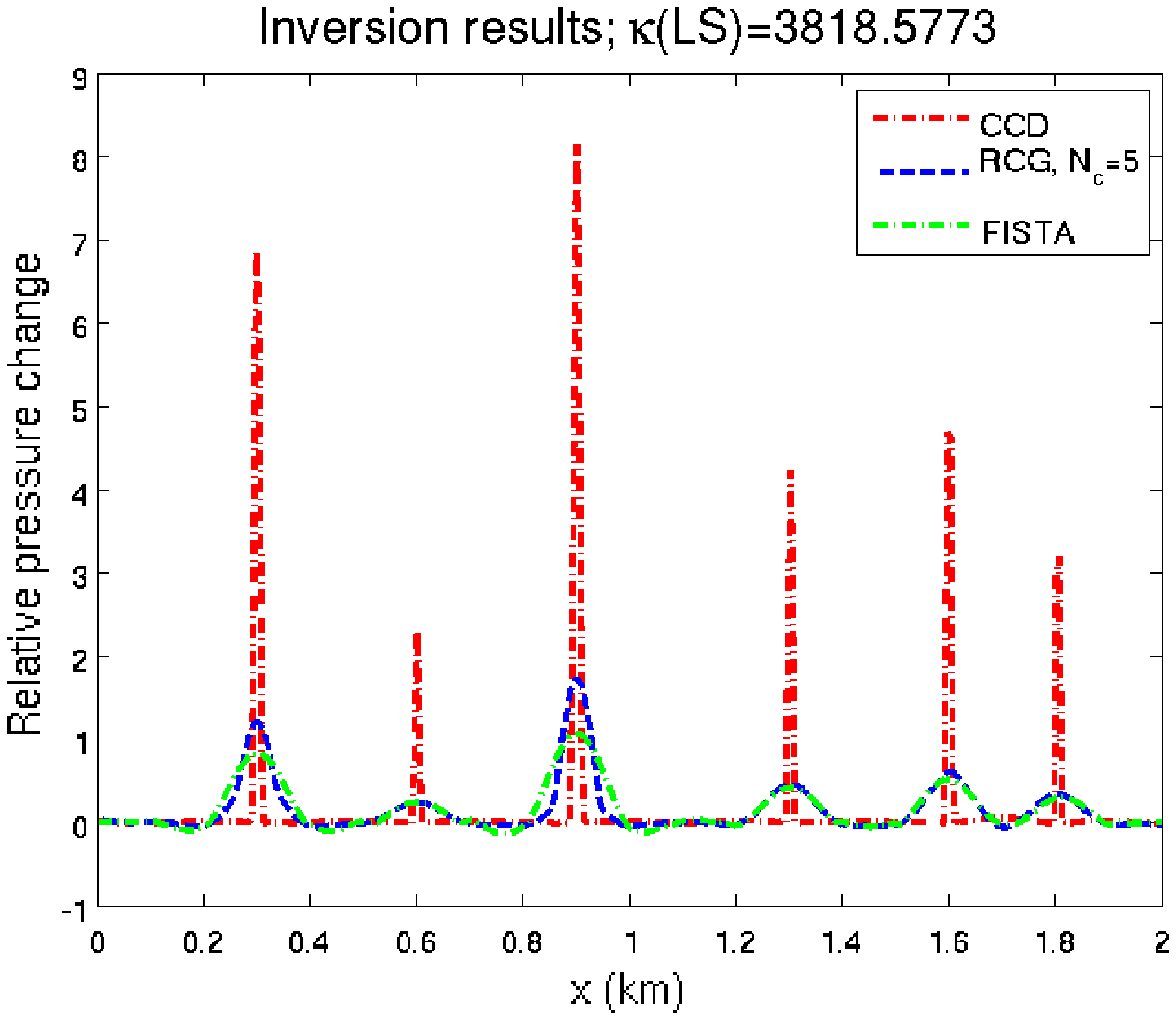}\label{fig:s01inv}}\\
          \vspace{-.3cm}
          \subfigure[]{\includegraphics[width=.48\textwidth]{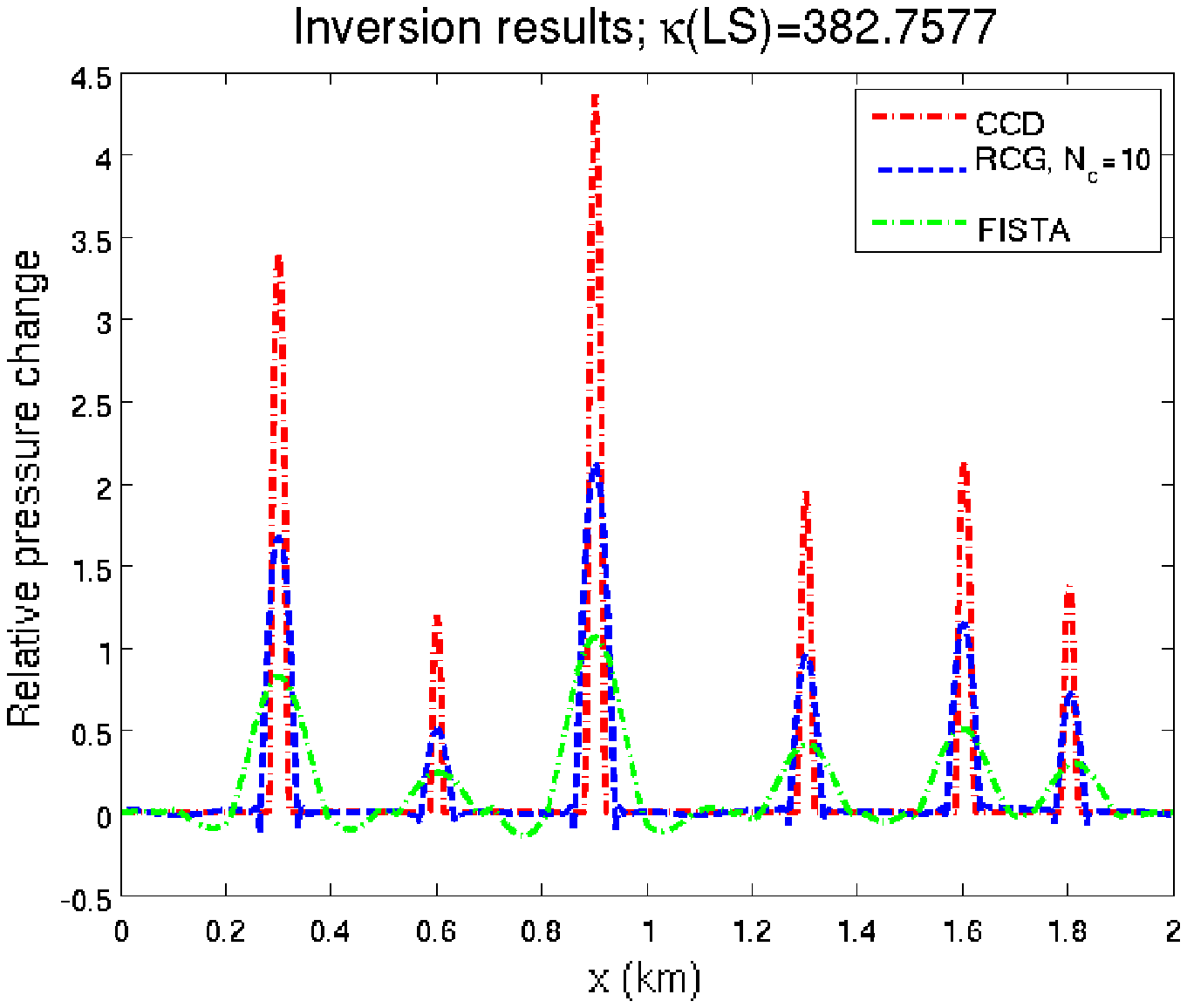}\label{fig:s1inv}}
          \subfigure[]{\includegraphics[width=.48\textwidth]{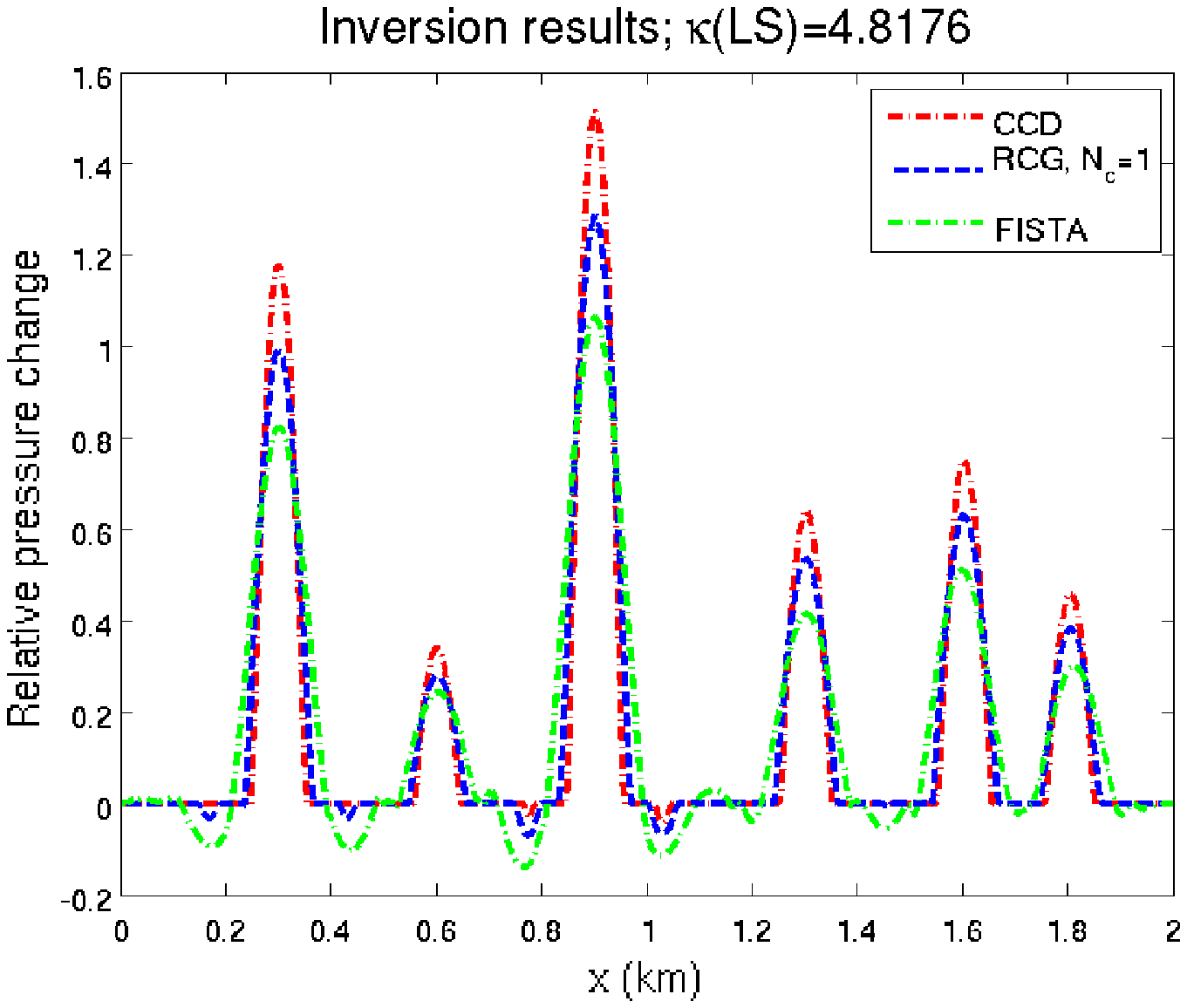}\label{fig:s100inv}}
          \vspace{-.5cm}
          \caption{ Inversion results for CCD (red), RCG (blue), FISTA (green) after 100 operator and adjoint applications for (a) $\lambda=.05$; (b) $\lambda=0.1$; (c) $\lambda=1$; (d) $\lambda=100$. Note that FISTA does not use $\lambda$ and the same FISTA results are shown in all plots but using different vertical scales. Improving condition number of (\ref{eq:interm}) is accompanied by slower convergence. Compressive Conjugate Directions method most accurately resolves the spiky model at early iterations, and performs well when (\ref{eq:interm}) is ill-conditioned.}
  \end{center}
\end{figure}
Inversion results of using the limited-memory Compressive Conjugate Directions Algorithm~\ref{alg:lmccd} with $m=100$, ADMM with restarted Conjugate Gradients Algorithm~\ref{alg:rcg} and FISTA of (\ref{eq:FISTA}) are shown in Figures~\ref{fig:s005inv},\ref{fig:s01inv},\ref{fig:s1inv},\ref{fig:s100inv} for $\lambda=0.05, 0.1, 1, 100$. In each case one hundred combined products of operators $\mathbf{A}$ and $\mathbf{A}^T$ with vectors were computed. We used the maximum FISTA step size of $\tau=10^{-4}$ in (\ref{eq:FISTA}) computed for operator (\ref{eq:F}).  
\begin{figure}[htb]
  \begin{center}
          \subfigure[]{\includegraphics[width=.48\textwidth]{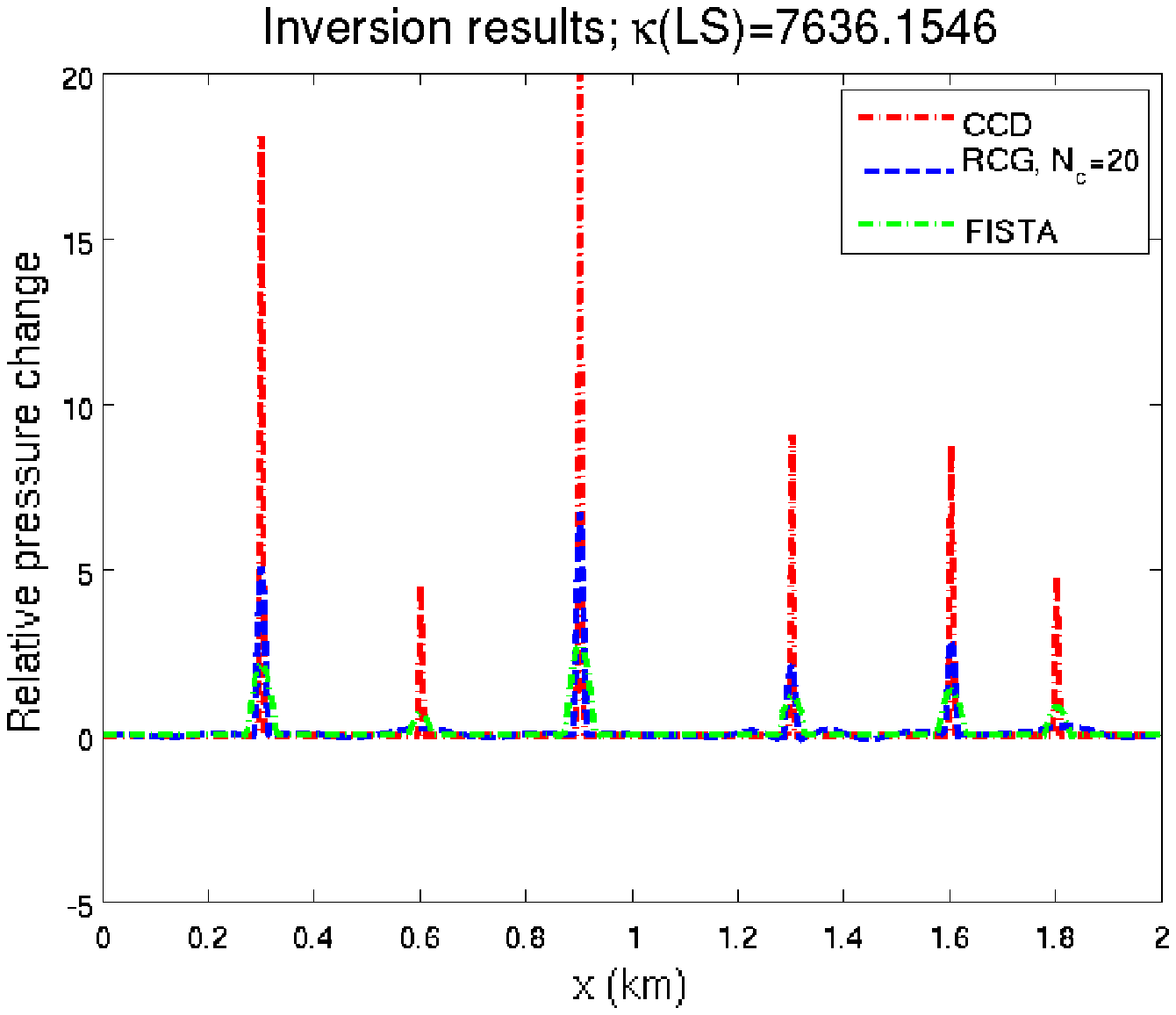}\label{fig:1s005inv}}
          \subfigure[]{\includegraphics[width=.48\textwidth]{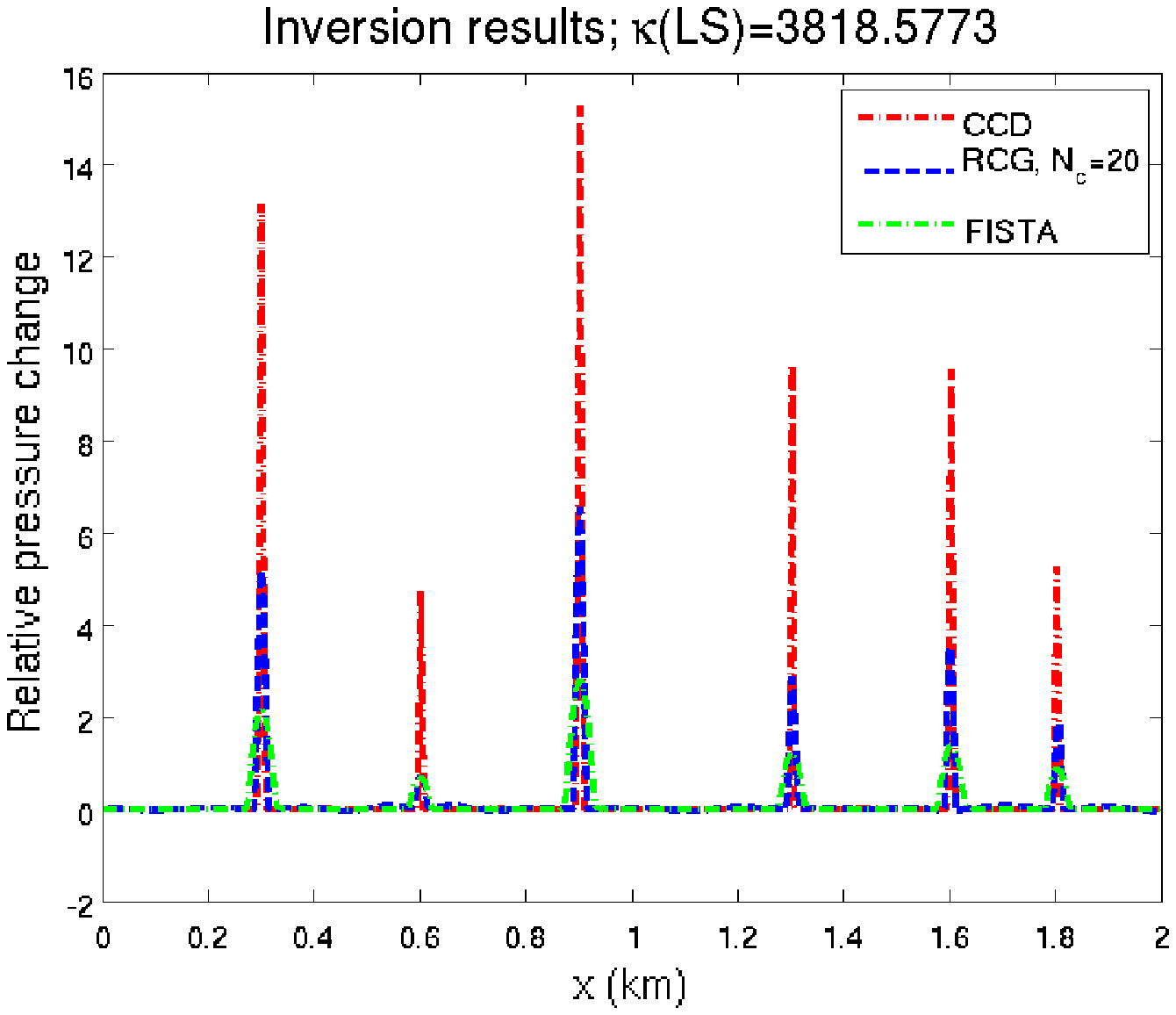}\label{fig:1s01inv}}\\
          \vspace{-.3cm}
          \subfigure[]{\includegraphics[width=.48\textwidth]{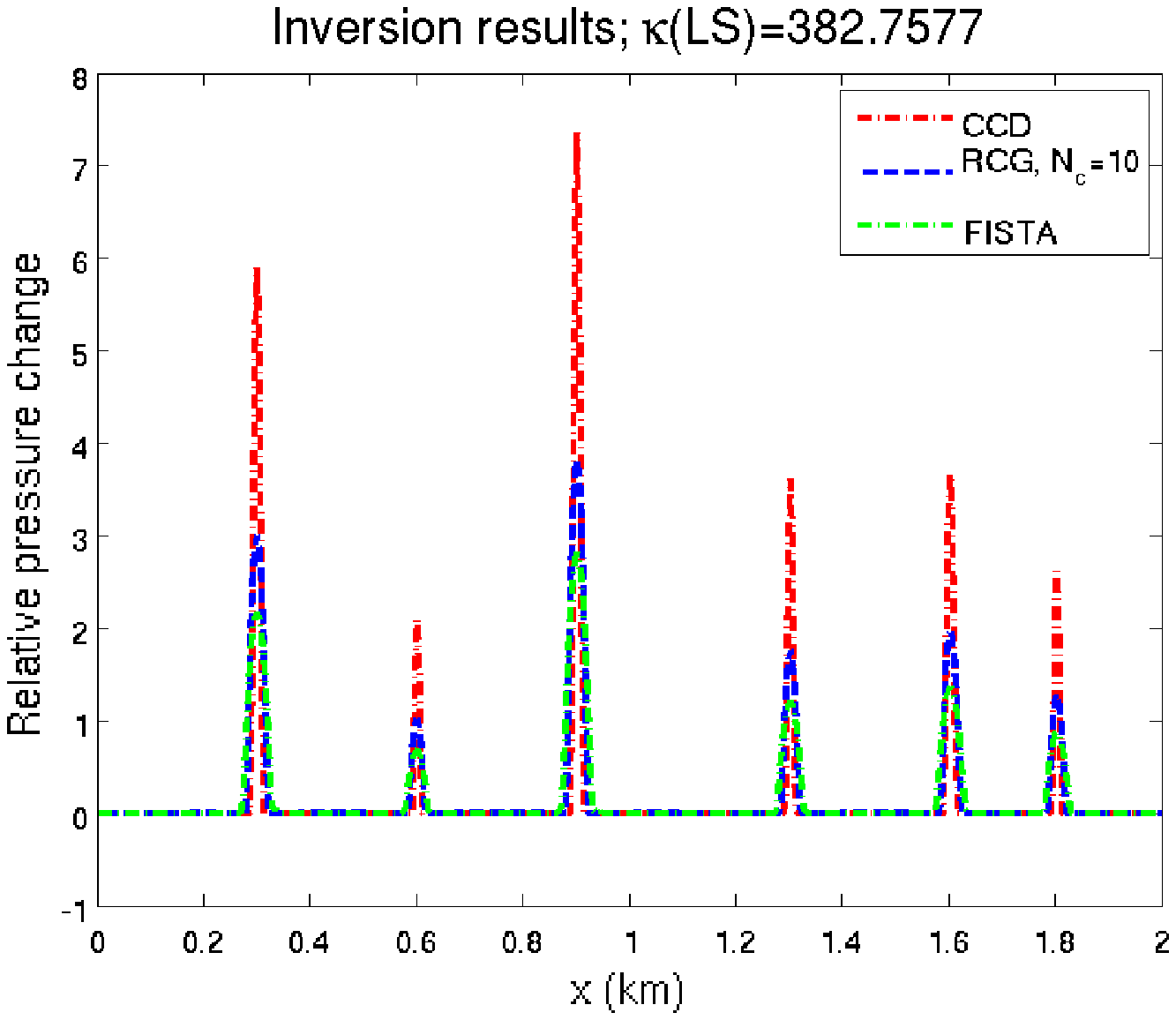}\label{fig:1s1inv}}
          \subfigure[]{\includegraphics[width=.48\textwidth]{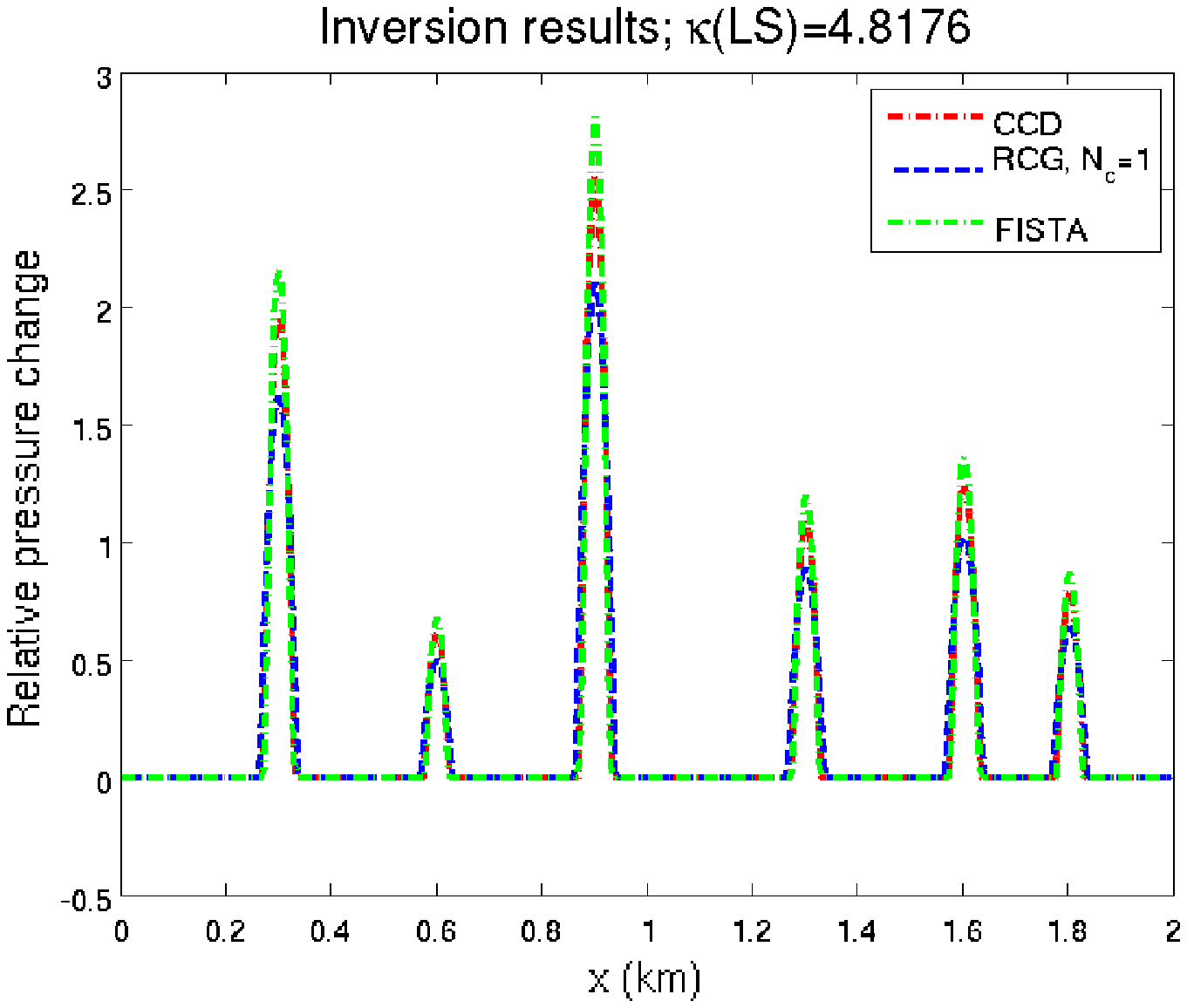}\label{fig:1s100inv}}
          \vspace{-.5cm}
          \caption{  Inversion results for CCD (red), RCG (blue), FISTA (green) after 1000 operator and adjoint applications for (a) $\lambda=.05$; (b) $\lambda=0.1$; (c) $\lambda=1$; (d) $\lambda=100$. Note that FISTA does not use $\lambda$ and the same FISTA results are shown in all plots but using different vertical scales. Compressive Conjugate Directions method still retains its advantage in resolving the spiky model at earlier iterations. \emph{Asymptotically} faster convergence of FISTA kicks in when $\lambda=100$ with a well-conditioned (\ref{eq:interm}), when the ADMM convergence is slowed---compare with Figure~\ref{fig:as100}.}
  \end{center}
\end{figure}
These results indicate that the Compressive Conjugate Directions method achieves qualitative recovery of the spiky model at early iterations. Superiority of the new method is especially pronounced when the intermediate least-squares minimization problem (\ref{eq:interm}) is ill-conditioned (see plot tops). The method retains its advantage after 1000 operator and adjoint applications, as shown in Figures~\ref{fig:1s005inv},\ref{fig:1s01inv},\ref{fig:1s1inv},\ref{fig:1s100inv}. Note that the error plots of the CCD in Figures~\ref{fig:cs005},\ref{fig:cs01},\ref{fig:cs1},\ref{fig:cs100} exhibit a trade-off between the convergence rate and condition number of problem (\ref{eq:interm}) discussed earlier in this subsection~\ref{subs:tradeoff}: a more ill-conditioned (\ref{eq:interm}) is associated with a faster convergence rate of the new method.

\begin{figure}[htb]
  \begin{center}
          \subfigure[]{\includegraphics[width=.48\textwidth]{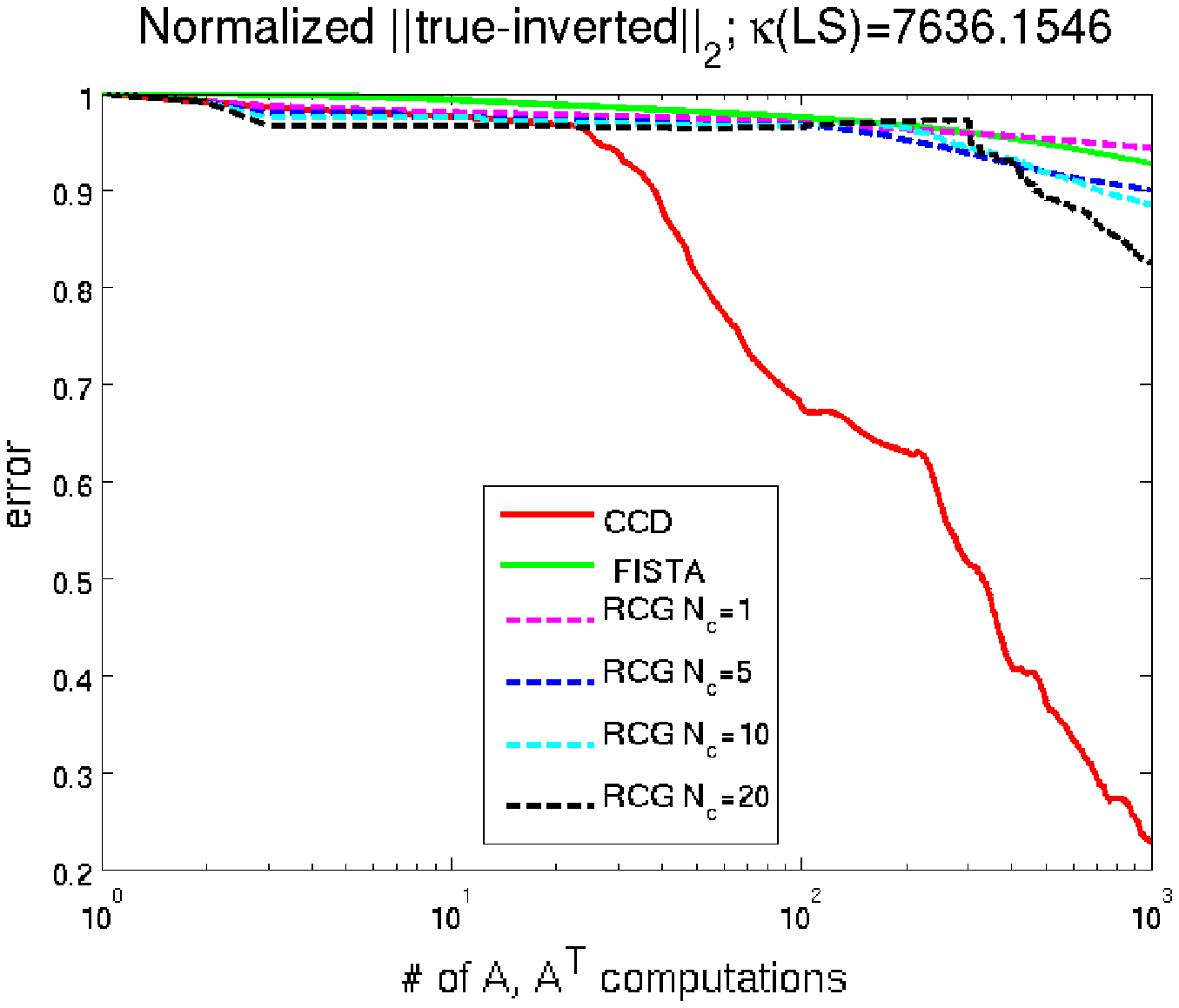}\label{fig:cs005}}
          \subfigure[]{\includegraphics[width=.48\textwidth]{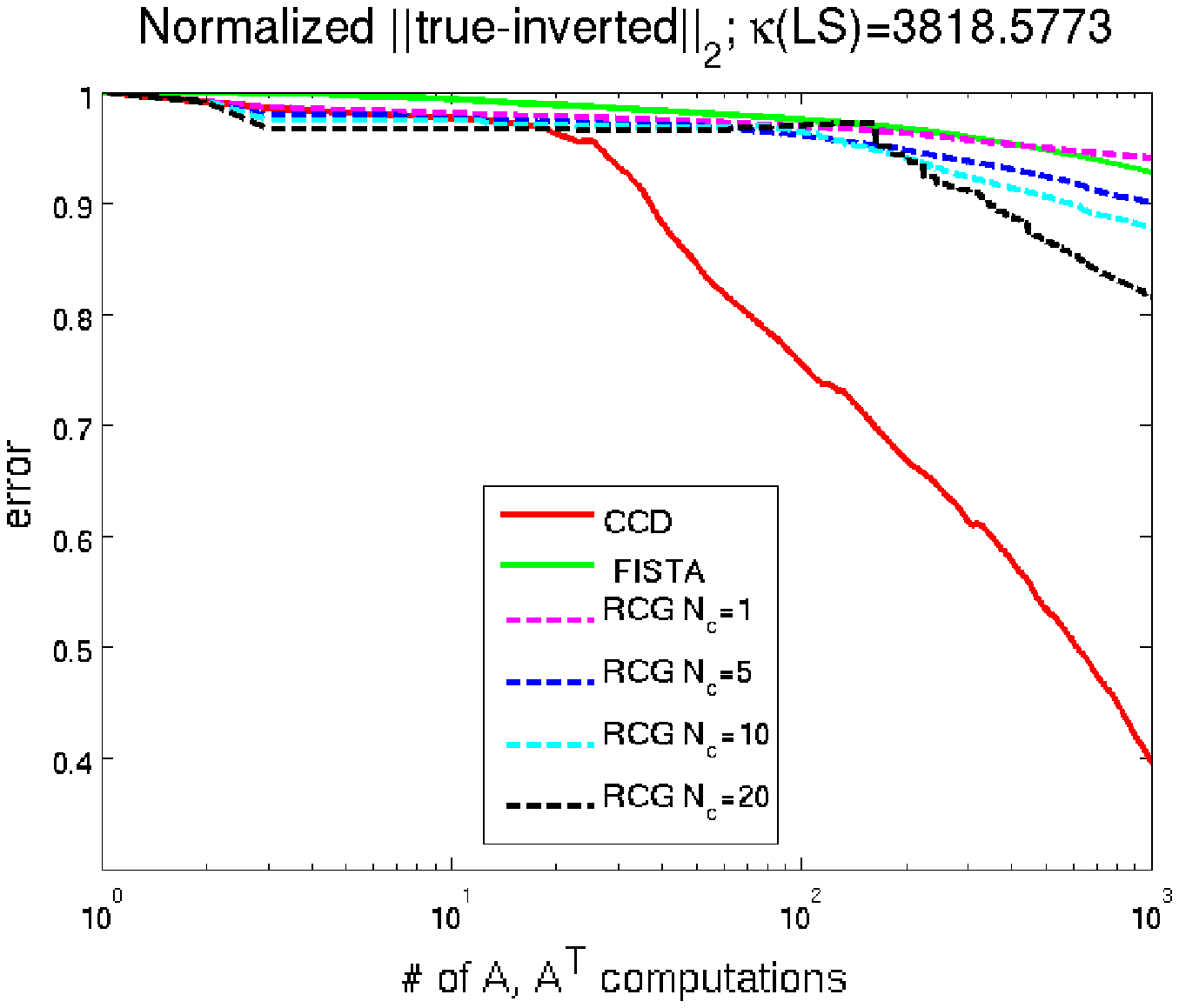}\label{fig:cs01}}\\
          \vspace{-.3cm}
          \subfigure[]{\includegraphics[width=.48\textwidth]{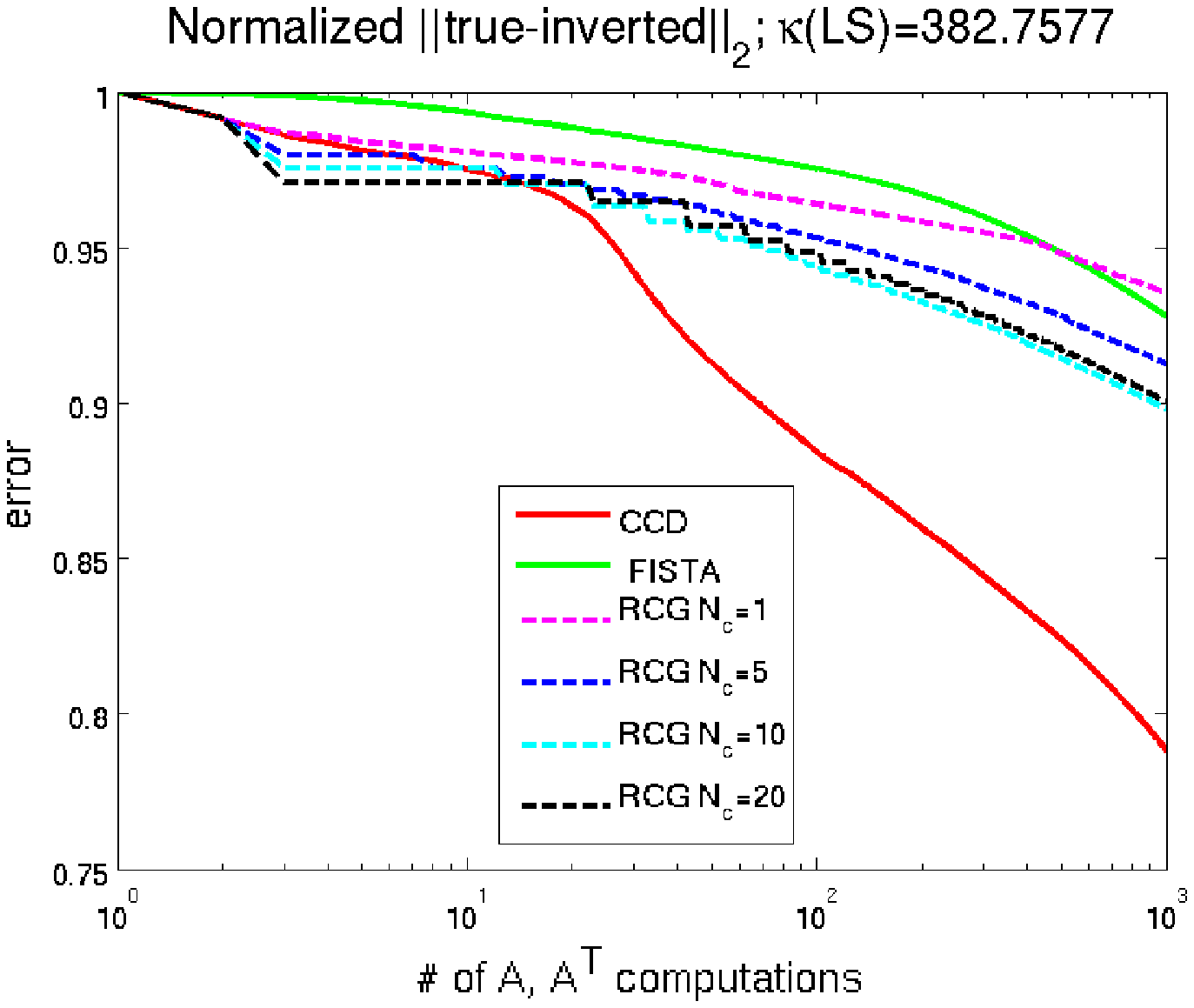}\label{fig:cs1}}
          \subfigure[]{\includegraphics[width=.48\textwidth]{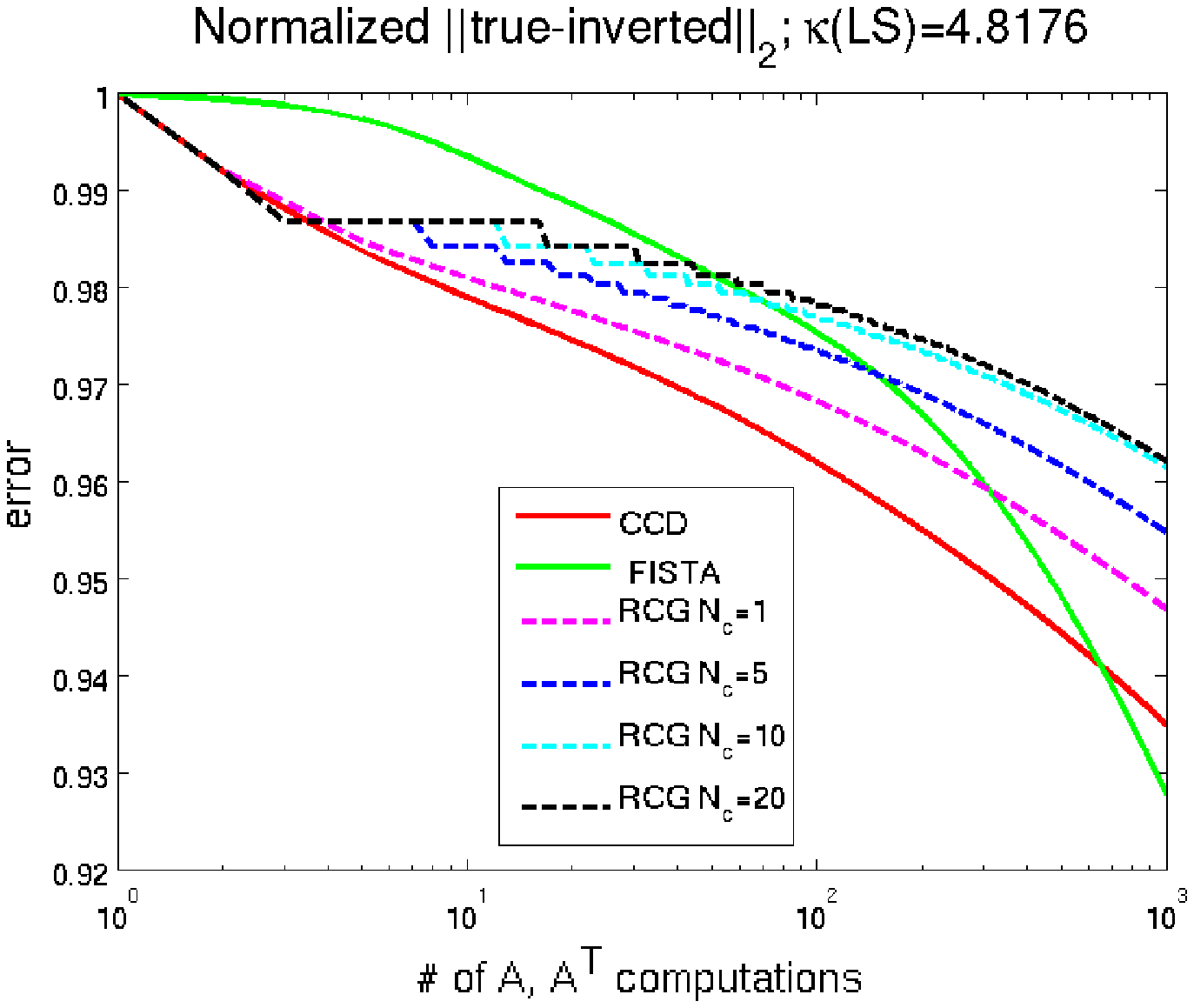}\label{fig:cs100}}
          \vspace{-.5cm}
          \caption{  Convergence curves for CCD (solid red), RCG (dashed), FISTA (solid green) for (a) $\lambda=.05$; (b) $\lambda=0.1$; (c) $\lambda=1$; (d) $\lambda=100$---compare with Figures~\ref{fig:1s005inv},\ref{fig:1s01inv},\ref{fig:1s1inv},\ref{fig:1s100inv}.}
  \end{center}
\end{figure}

Figures~\ref{fig:as005},\ref{fig:as01},\ref{fig:as1},\ref{fig:as100} show error plots for the CCD, ADMM with \emph{exact} minimization of (\ref{eq:interm}), and FISTA. The said trade-off between the convergence rate and condition number of (\ref{eq:interm}) is exhibited by the ADMM. The CCD curves approach the convergence rates of the ADMM once Algorithm~\ref{alg:lmccd} has accumulated enough information about the geometry of the objective function in vectors (\ref{eq:pqm}). Note that the advantage of a faster asymptotic convergence rate of FISTA kicks in only when the ADMM-based methods use values of $\lambda$ that are not optimal for their convergence---see Figures~\ref{fig:cs100} and \ref{fig:as100}. In this case (\ref{eq:interm}) is very well conditioned, and its adequate solution requires only a single step of gradient descent at each iteration of the ADMM, depriving conjugate-gradients-based methods of their advantage. FISTA, being based on accelerating a gradient-descent method, now \emph{asymptotically} beats the convergence rates of the other techniques but this happens too late through the iterations to be of practical significance. In other words, in this particular example FISTA can beat the ADMM (and CCD) only if the latter use badly selected values of $\lambda$. Generalizing this observation about FISTA and ADMM for problem (\ref{eq:opt1}) with a general operator $\mathbf{A}$ goes beyond the scope of our work.

\begin{figure}[htb]
  \begin{center}
          \subfigure[]{\includegraphics[width=.48\textwidth]{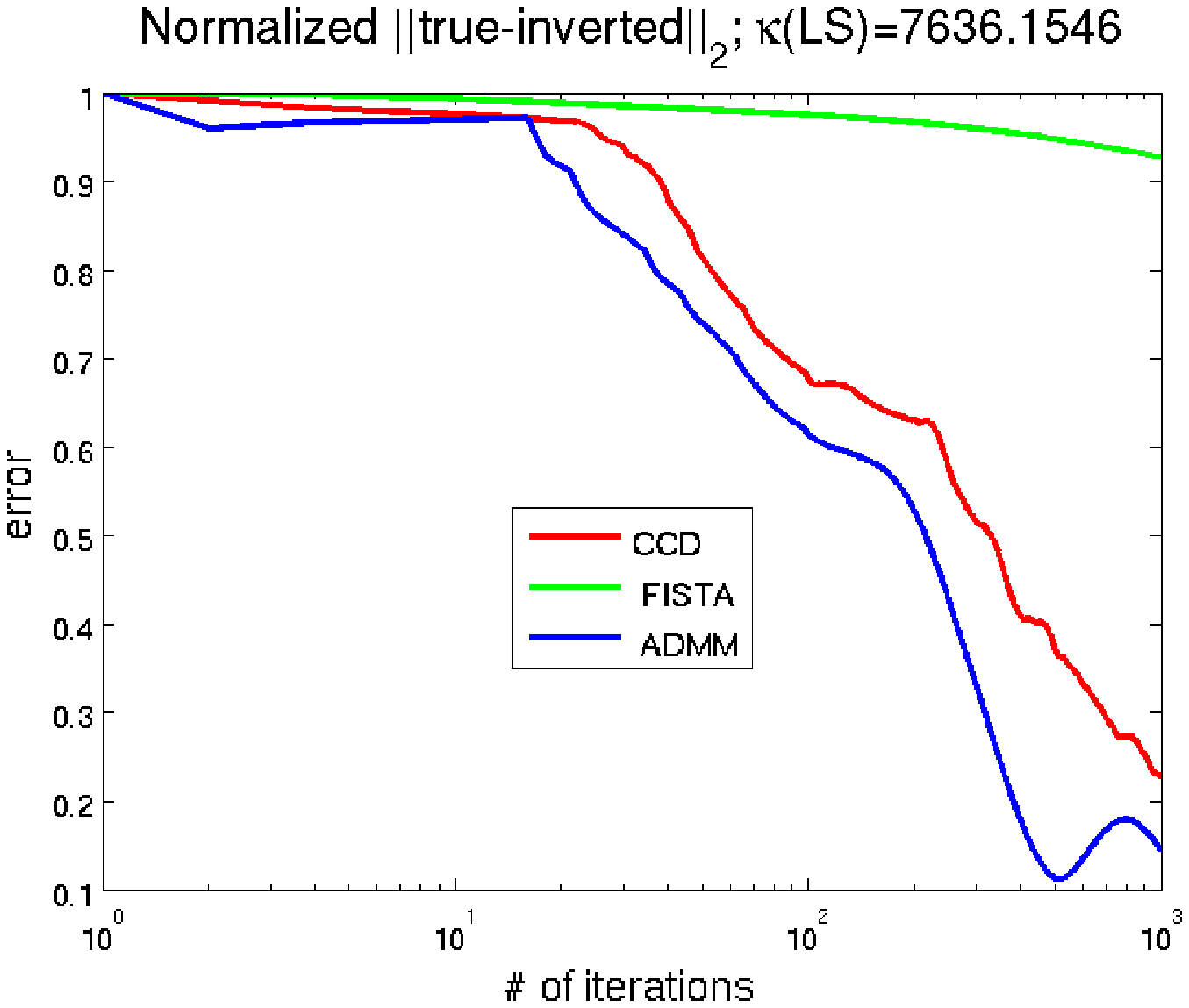}\label{fig:as005}}
          \subfigure[]{\includegraphics[width=.48\textwidth]{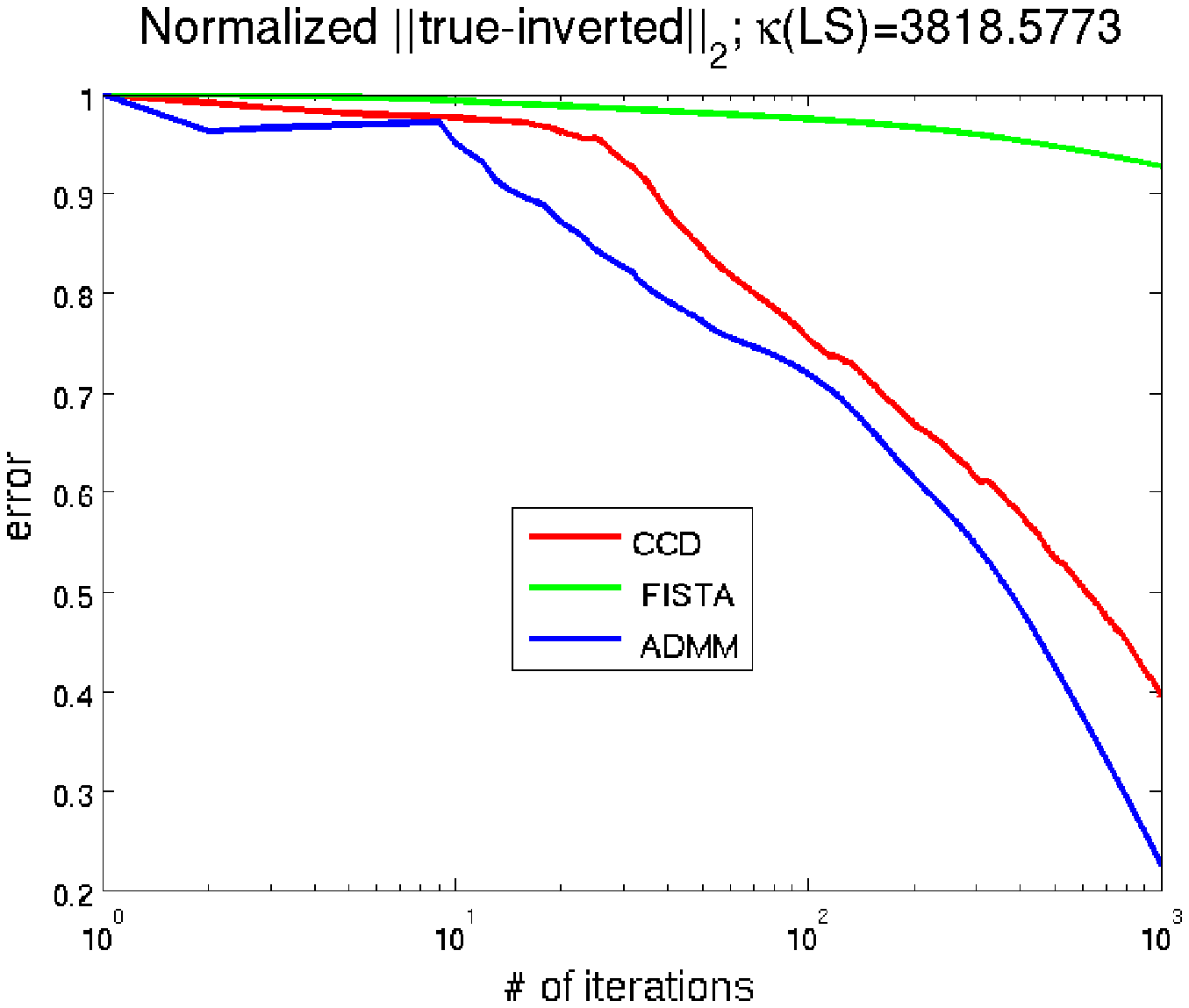}\label{fig:as01}}
          \subfigure[]{\includegraphics[width=.48\textwidth]{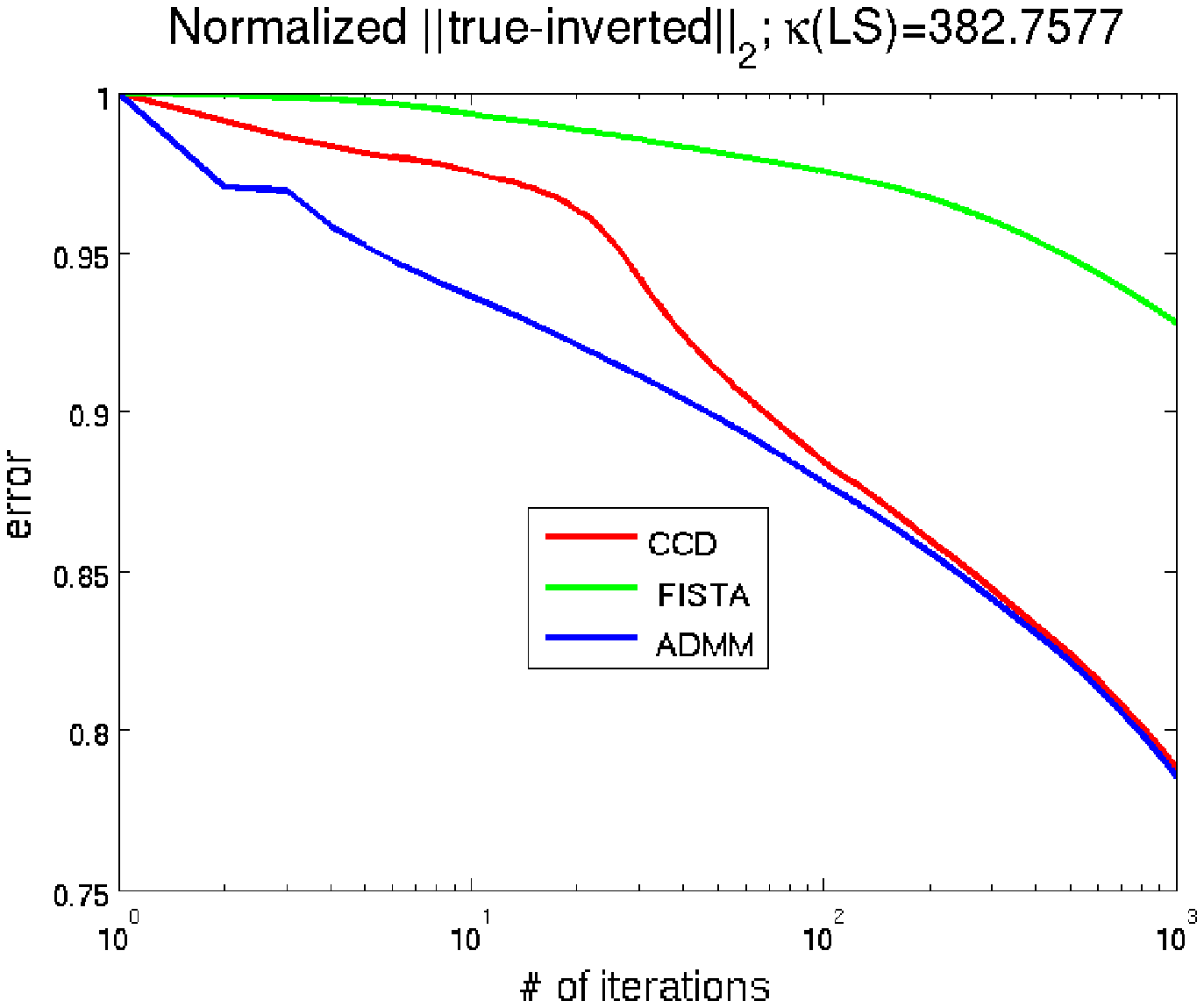}\label{fig:as1}}
          \subfigure[]{\includegraphics[width=.48\textwidth]{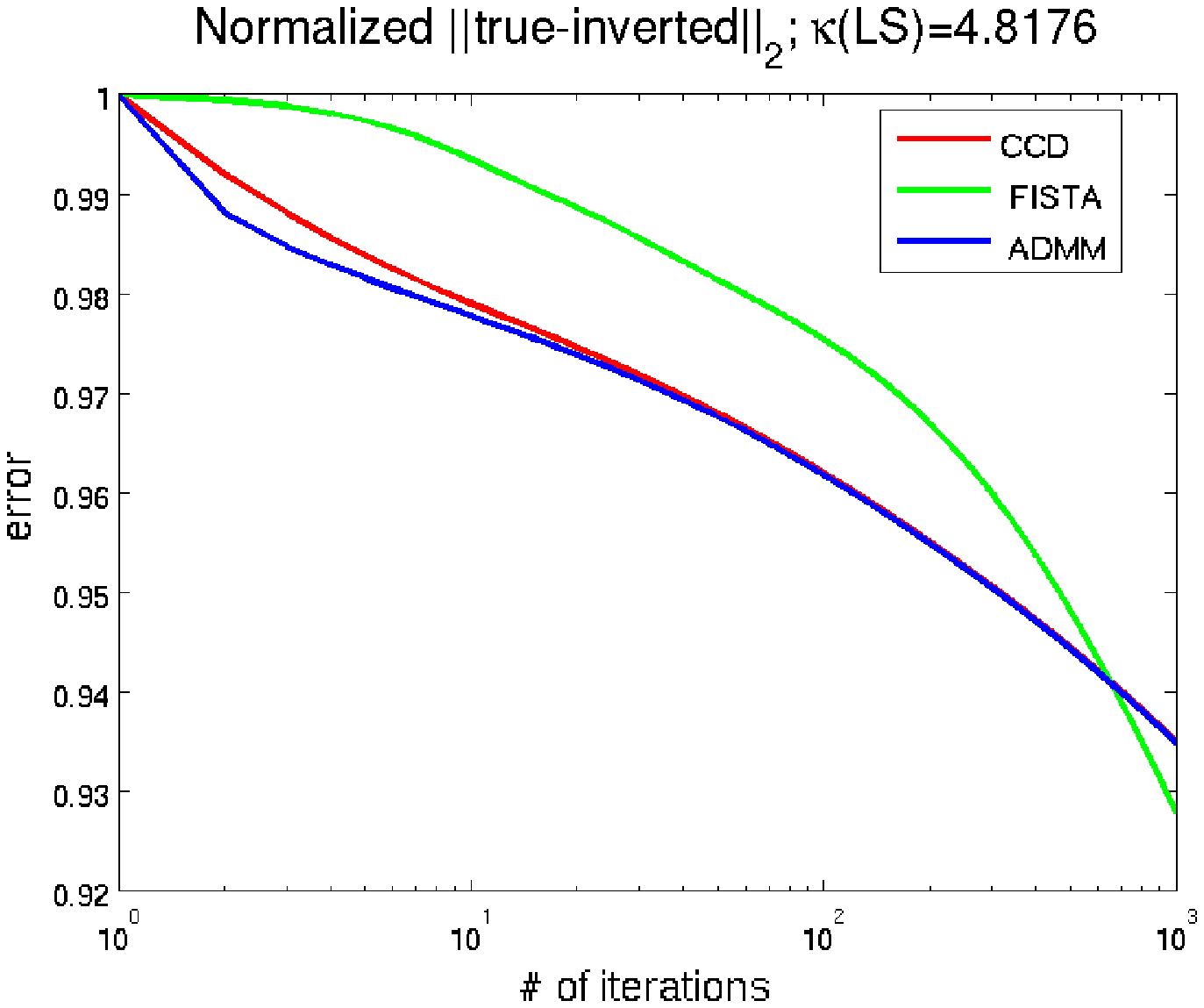}\label{fig:as100}}
          \caption{  Convergence curves for CCD (solid red), ADMM with exact solver (blue), FISTA (green) for (a) $\lambda=.05$; (b) $\lambda=0.1$; (c) $\lambda=1$; (d) $\lambda=100$. Limited-memory Compressive Conjugate Directions with $m=100$ achieves convergence rate comparable to ADMM with exact minimization of (\ref{eq:interm}).}
  \end{center}
\end{figure}

\subsection{Inversion of Pressure Contrasts}
\label{subs:3d}
In this section we apply the Compressive Conjugate Gradients method to identify sharp subsurface pressure contrasts in a reservoir from observations of induced surface displacements. We use a 3-dimensional geomechanical poro\-elastostatic model of pressure-induced deformation based on Biot's theory \cite{SEGDEF}.

We solve a TV-regularized inversion problem (\ref{eq:opt}) with operator $\mathbf{B}$ given by (\ref{eq:aniso}), and operator $\mathbf{A}$ given by extension of (\ref{eq:F})
\begin{equation}
\mathbf{A} \mathbf{u}\;=\; d(x,y),\; d(x,y)\;=\; c\int_0^A \int_0^A \frac{D u(\xi, \eta ) d\xi d\eta  }  {\left(D^2 + (x-\xi)^2 + (y-\eta)^2\right)^{3/2}},
\label{eq:F2}
\end{equation}
where we assume that $\mathbf{u}=u(\xi,\eta), (\xi,\eta)\in [-A,A]\times[-A,A]$ is a relative pore pressure change at a point $(\xi,\eta)$ of the reservoir at a constant depth $D$, $2A$ is the reservoir length and breadth, $\mathbf{d}=d(x,y),(x,y)\in [-A,A]\times[-A,A]$ is the induced vertical displacement at a point $(x,y)$ on the surface, and a constant factor $c$ is determined by the poroelastic medium properties and reservoir thickness.

In this experiment, we discretize the pressure and displacement using a $50\times 50$ grid, with $A=1.2$ km, $D=.455$ km and $c=5.8515\times 10^{3}$, based on a poroelastic model of a real-world unconventional hydrocarbon reservoir \cite{musamark14}. We use a least-squares fitting weight $\alpha=.1$ in (\ref{eq:opt}) to achieve a desirable trade-off between fitting fidelity and blockiness of the inverted pressure change. The blocky model shown in Figure~\ref{fig:btrue} was used to forward-model surface displacements using operator (\ref{eq:F2}). Random Gaussian noise with $\sigma=0.15\%$ of maximum data amplitude, muted below a quarter of the Nyquist wavenumber, was added to the clean data to produce the noisy displacement measurements of Figure~\ref{fig:bdata}.

\begin{figure}[htb]
  \begin{center}
          \subfigure[]{\includegraphics[width=.48\textwidth]{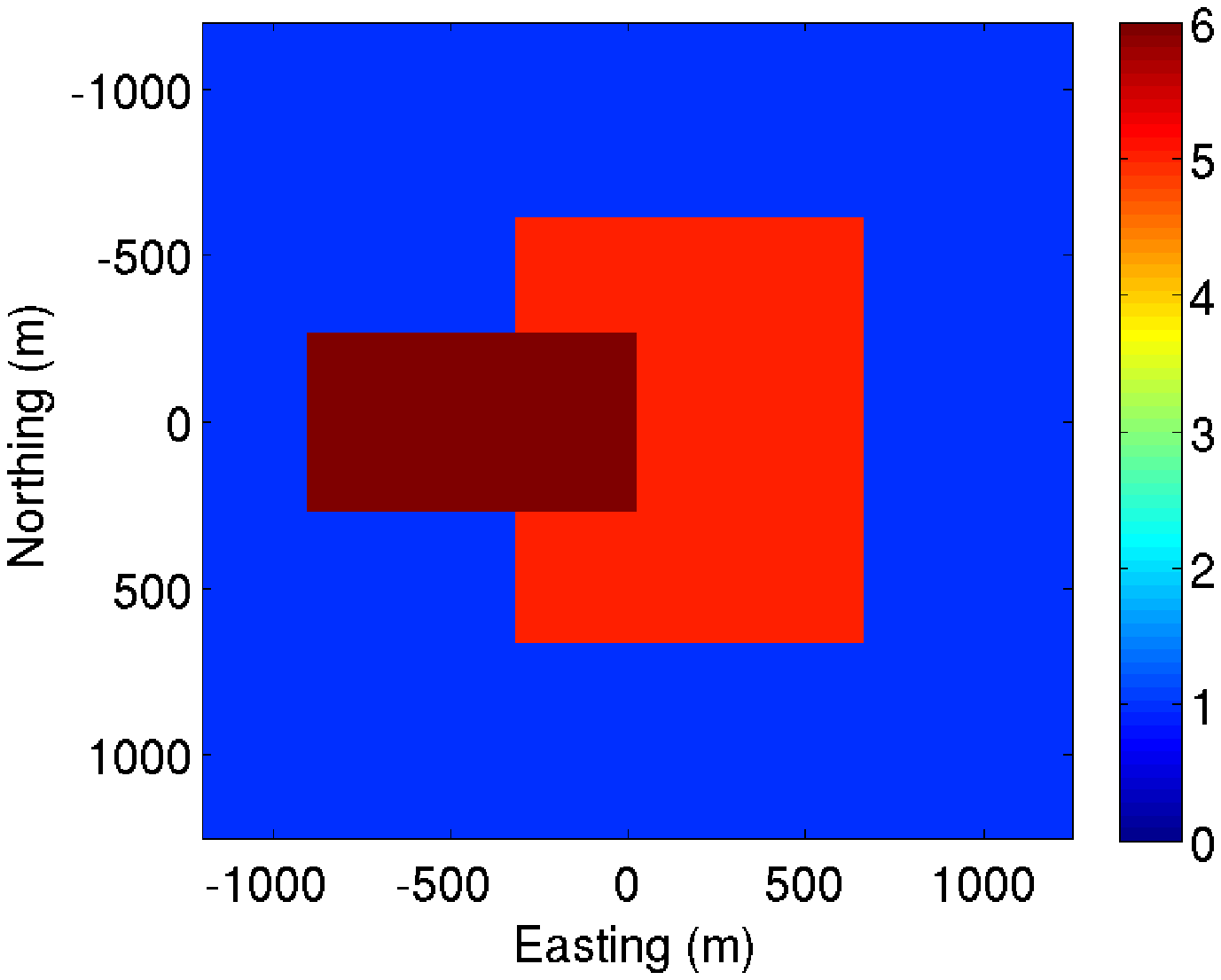}\label{fig:btrue}}
          \quad
          \subfigure[]{\includegraphics[width=.48\textwidth]{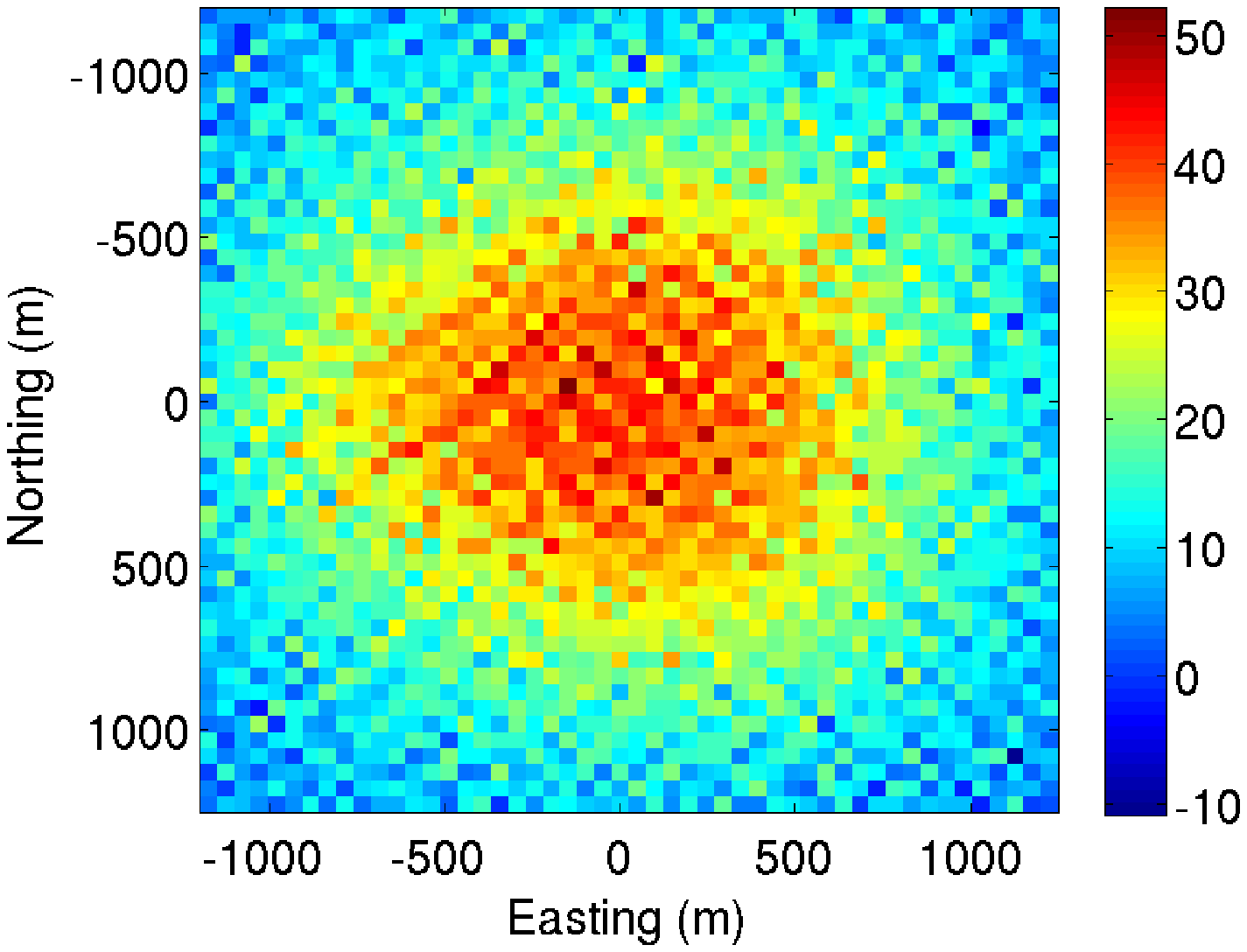}\label{fig:bdata}}
          \vspace{-.5cm}
          \caption{ (a) A blocky true pressure model (MPa); (b) the resulting surface displacements (mm) with added random Gaussian noise with $\sigma=15\%$ of data amplitude.}
  \end{center}
\end{figure}

Figure~\ref{fig:1b10ccd} shows the result of the limited-memory Compressive Conjugate Directions Algorithm~\ref{alg:lmccd} with $m=100$, after a total of 100 combined applications of operator $\mathbf{A}$ and its adjoint. For the same number of operator applications, Figure~\ref{fig:1b10rcg} shows the best result of the ADMM with restarted Conjugate Gradients Algorithm~\ref{alg:rcg}. The corresponding results after 1000 applications of $\mathbf{A}$ and $\mathbf{A}^T$ are shown in Figures~\ref{fig:b10ccd} and \ref{fig:b10rcg}, respectively.

\begin{figure}[htb]
  \begin{center}
          \subfigure[]{\includegraphics[width=.48\textwidth]{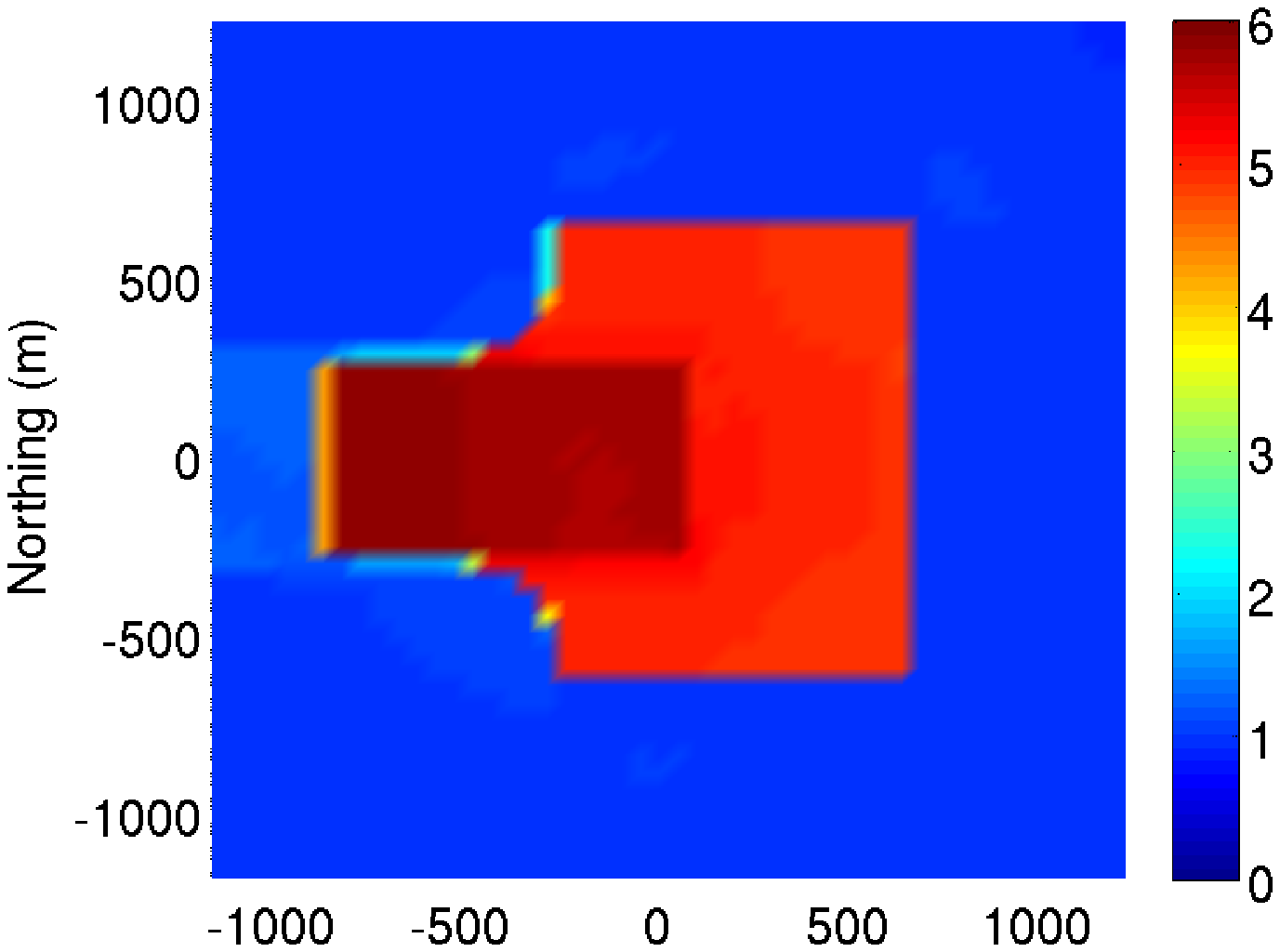}\label{fig:1b10ccd}}
          \subfigure[]{\includegraphics[width=.48\textwidth]{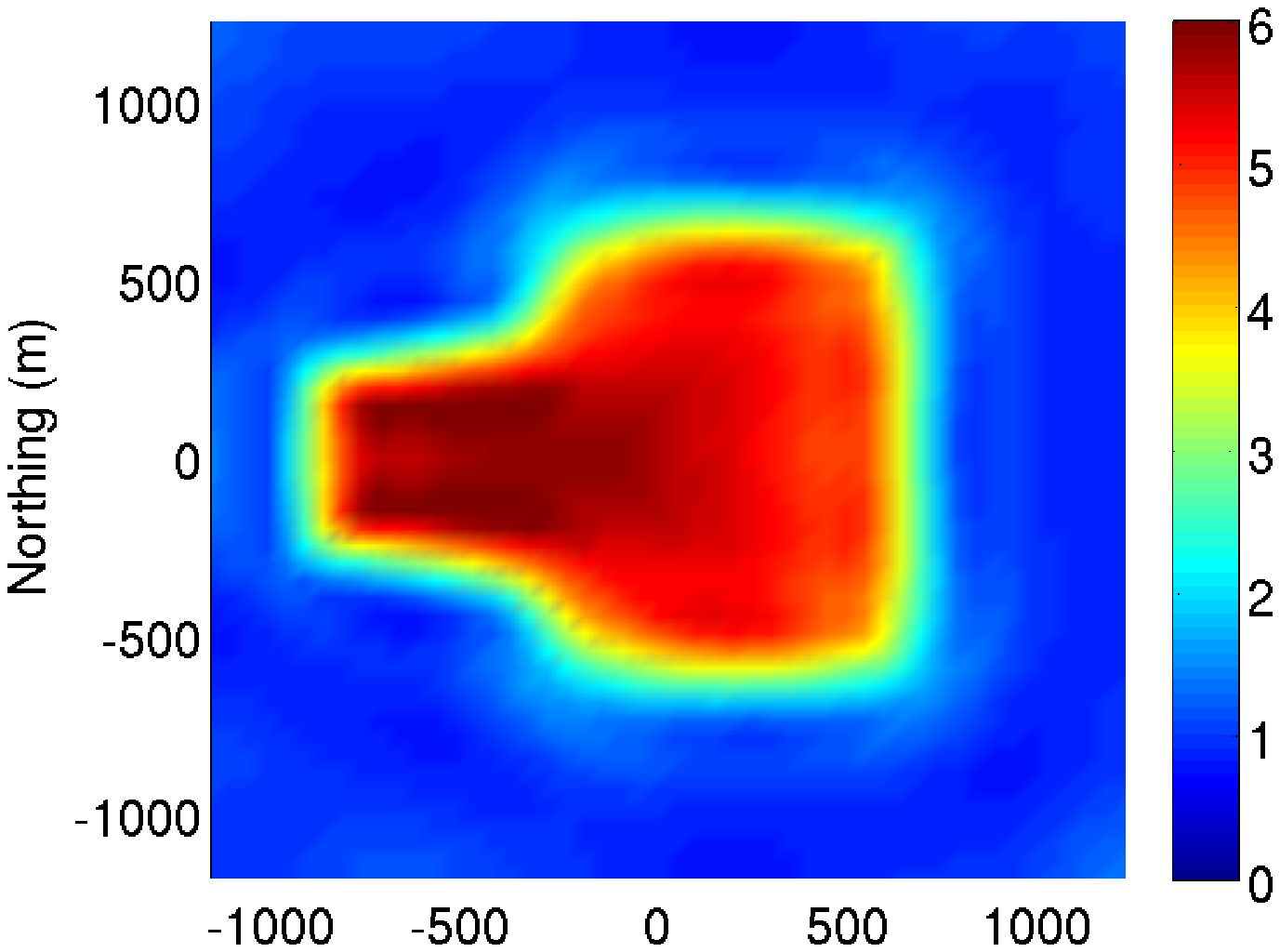}\label{fig:1b10rcg}}\\
          \vspace{-.3cm}
          \subfigure[]{\includegraphics[width=.48\textwidth]{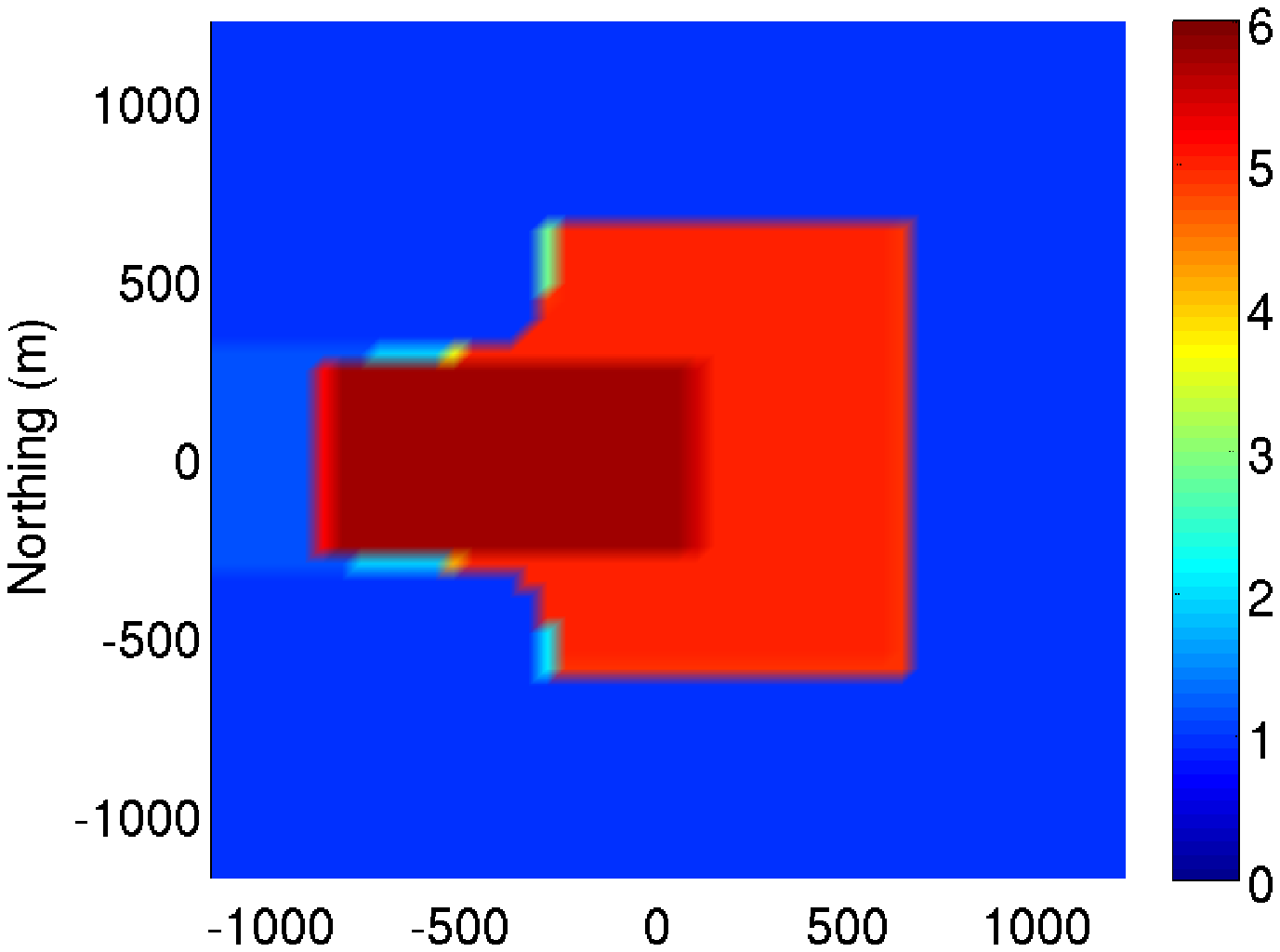}\label{fig:b10ccd}}
          \subfigure[]{\includegraphics[width=.48\textwidth]{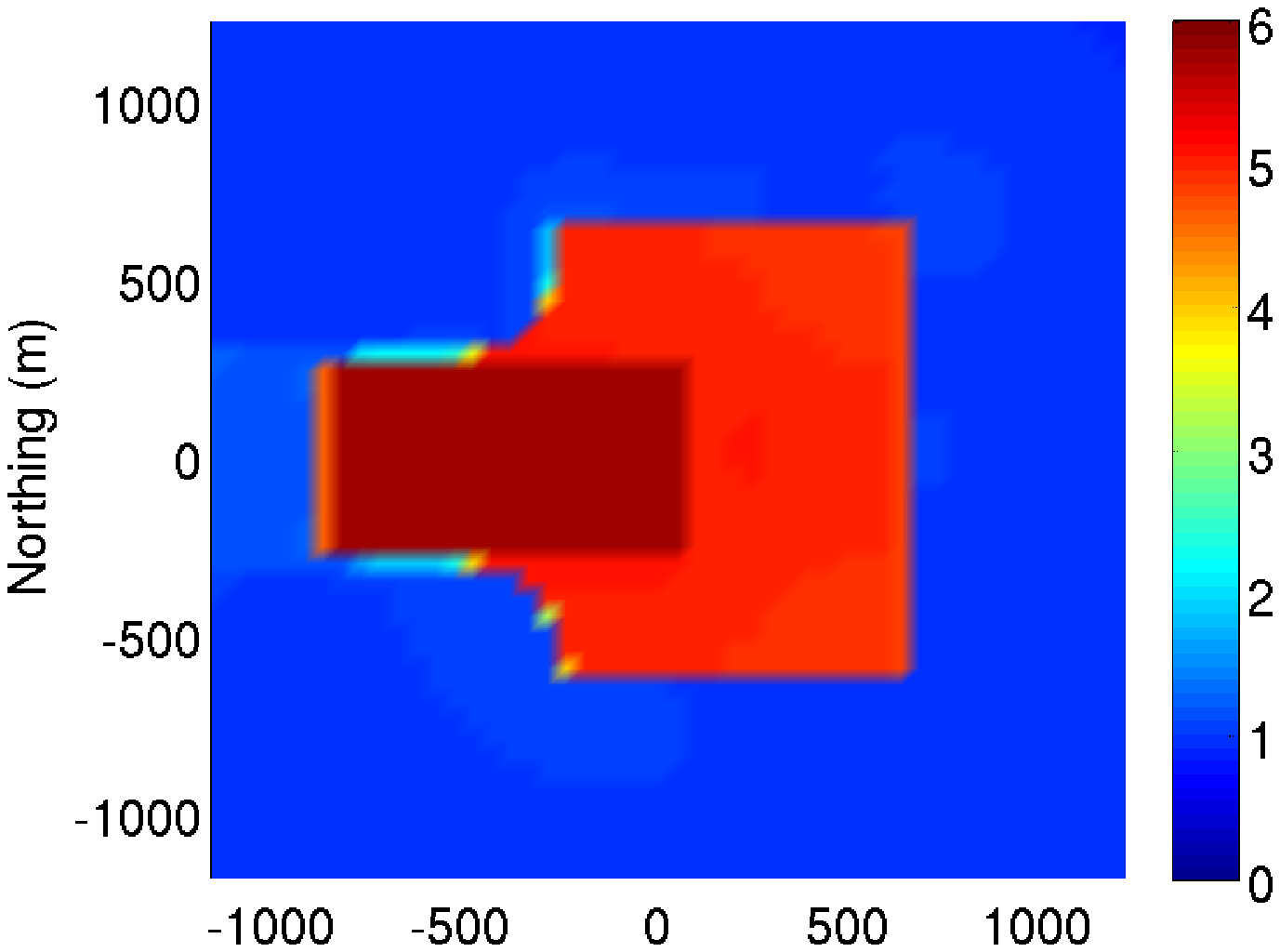}\label{fig:b10rcg}}

          \vspace{-.5cm}
          \caption{Inversion results after (a) 100 iterations (operator and adjoint applications) of CCD with $\lambda=10$; (b) 100 iterations of RCG with $\lambda=10$; (c) 1000 iterations of CCD with $\lambda=10$; (d) 1000 iterations of RCG with $\lambda=10$. In all tests, CCD is the limited-memory Compressive Conjugate Directions method of Algorithm~\ref{alg:lmccd}; RCG is ADMM with restarted Conjugate Gradients of Algorithm~\ref{alg:rcg} showing the most accurate model reconstruction among the outputs for different $N_c$--see Figures~\ref{fig:b1},\ref{fig:b10},\ref{fig:b50},\ref{fig:b100}.}
  \end{center}
\end{figure}

The Compressive Conjugate Directions method resolves key model features faster than the ADMM using iterative solution of (\ref{eq:interm}) restarted at each ADMM iteration. This advantage of our method is particularly pronounced when the intermediate least-squares problem (\ref{eq:interm}) is ill-conditioned---compare Figures~\ref{fig:b1},\ref{fig:b10} with Figures~\ref{fig:b50},\ref{fig:b100}. To accurately resolve the blocky pressure model of Figure~\ref{fig:btrue}, the Compressive Conjugate Directions technique requires about a tenth of operator $\mathbf{A}$ and adjoint applications compared with Algorithm~\ref{alg:rcg} when (\ref{eq:interm}) is poorly conditioned. And again, as in the previous example, there is a trade-off between the convergence rate of the Compressive Conjugate Directions and the condition number of (\ref{eq:interm}): values of $\lambda$ that result in more poorly-conditioned (\ref{eq:interm}) yield the fastest convergence. 

\begin{figure}[htb]
  \begin{center}
          \subfigure[]{\includegraphics[width=.48\textwidth]{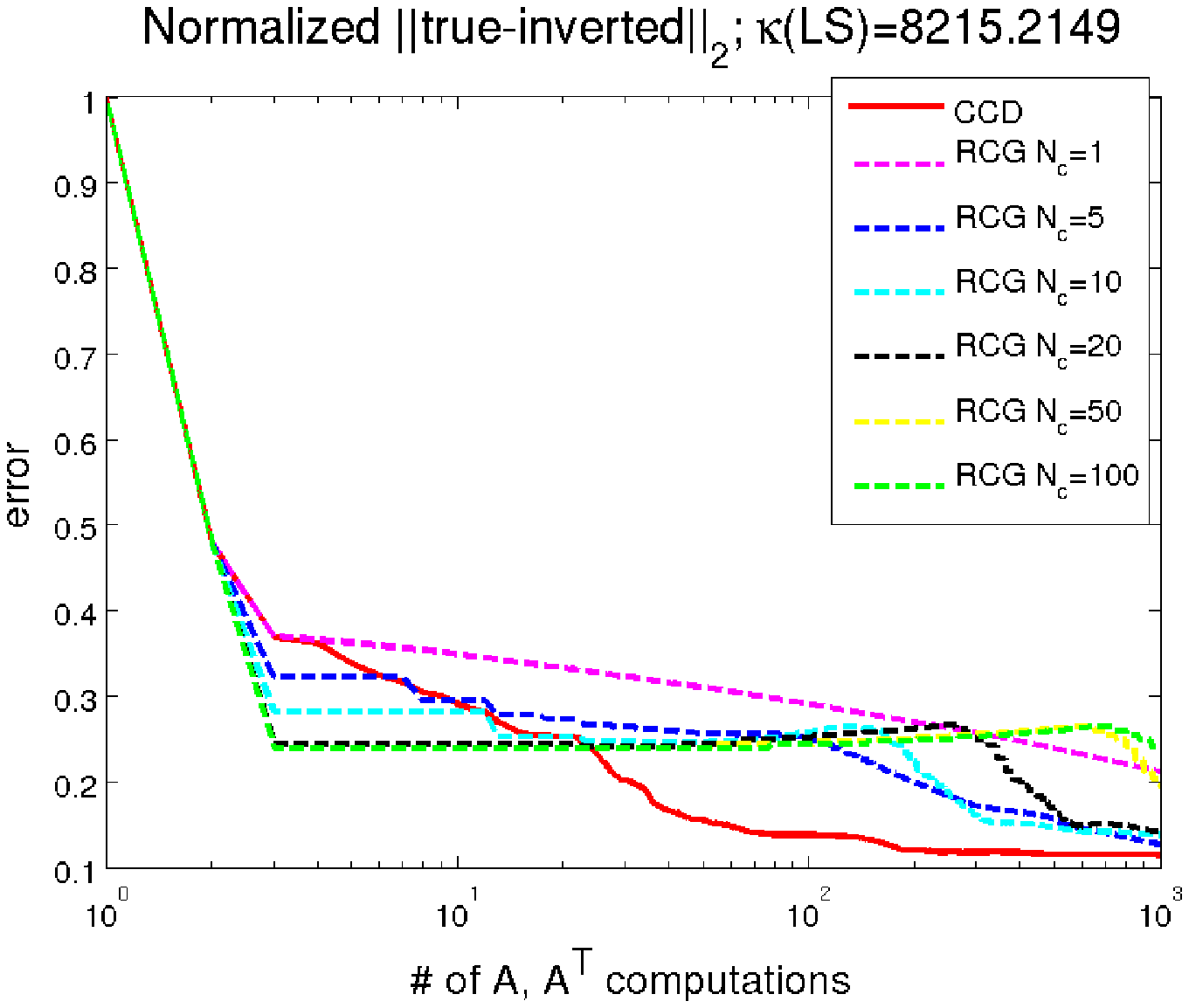}\label{fig:b1}}
          \subfigure[]{\includegraphics[width=.48\textwidth]{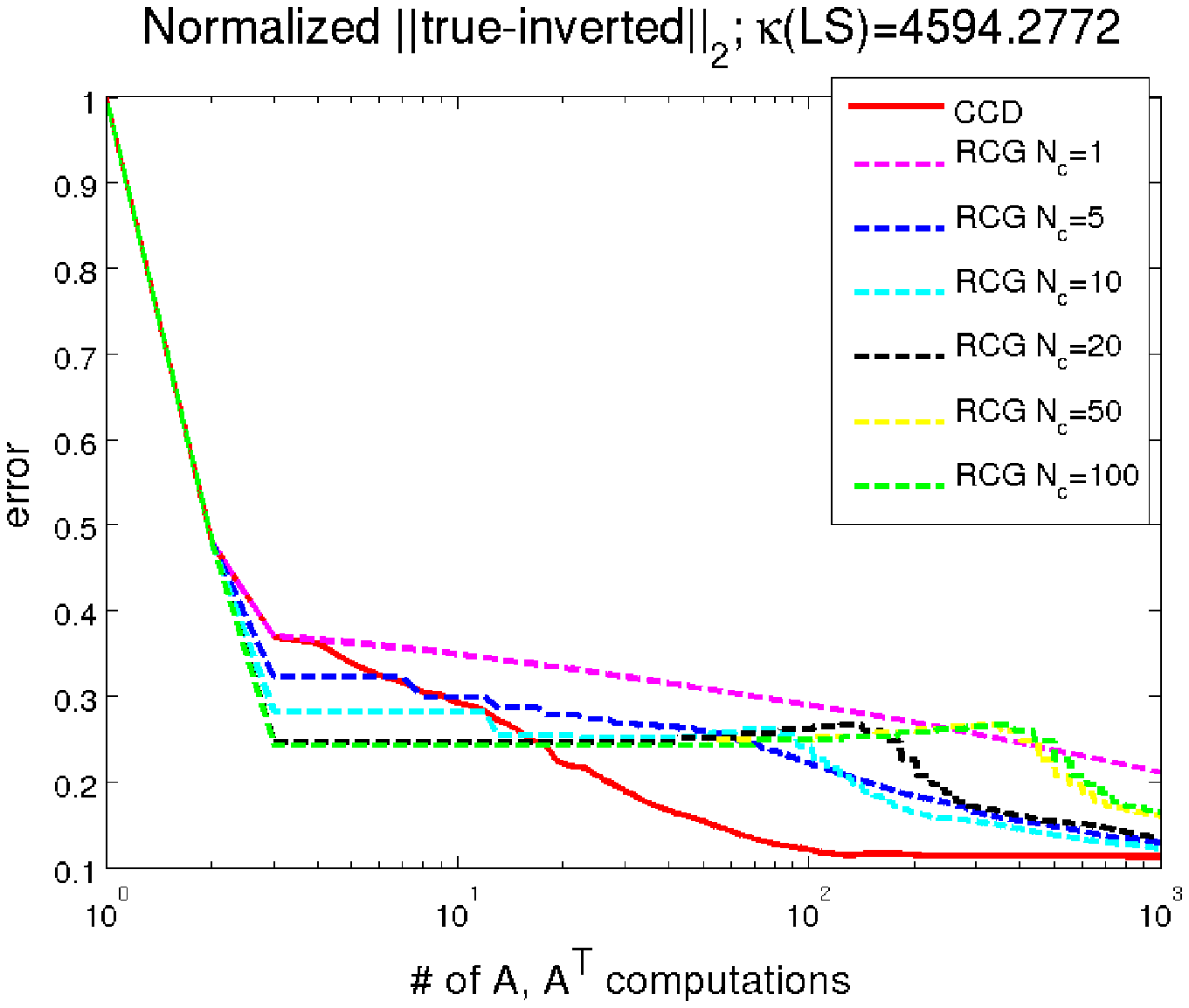}\label{fig:b10}}\\
          \vspace{-.3cm}
          \subfigure[]{\includegraphics[width=.48\textwidth]{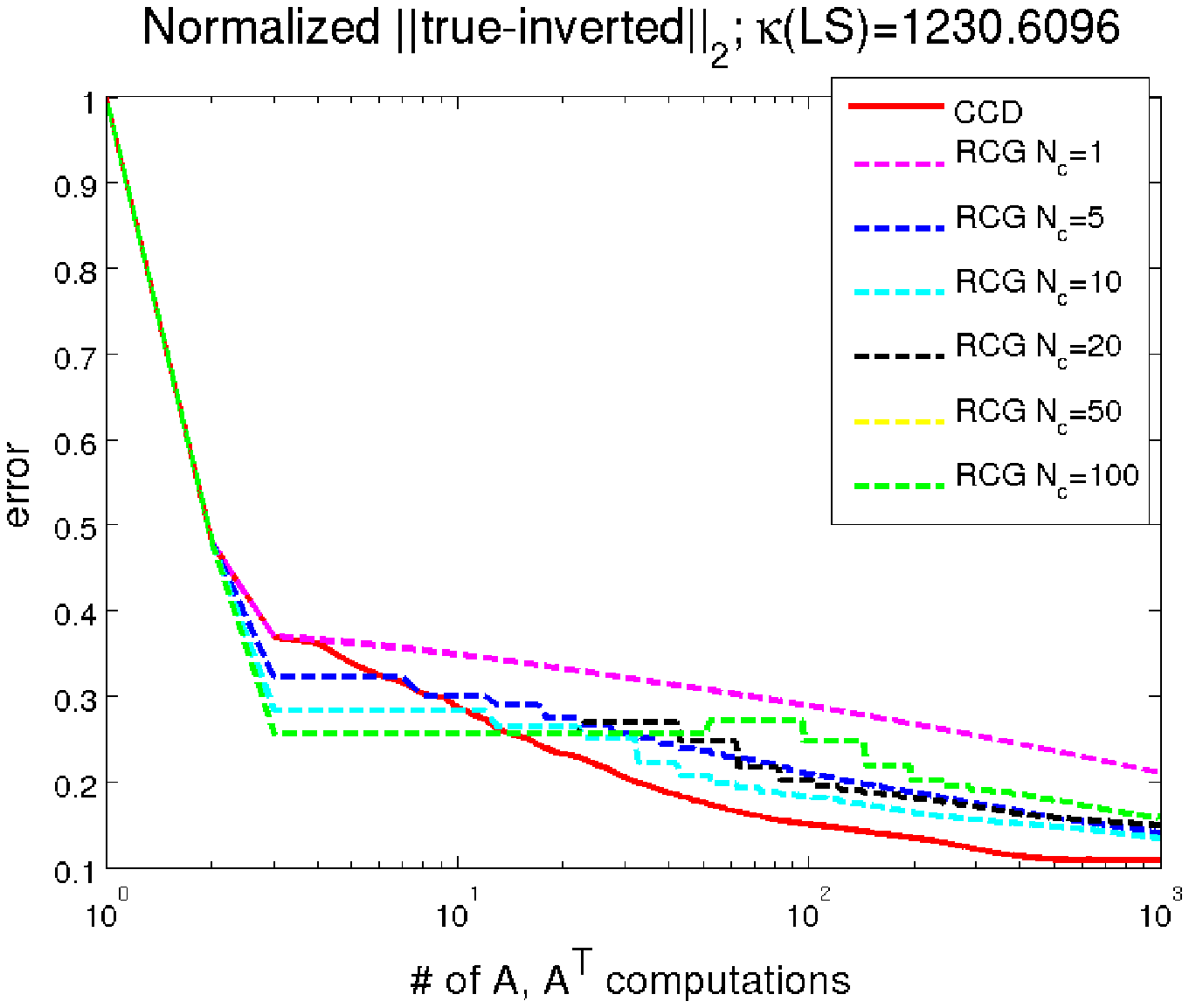}\label{fig:b50}}
          \subfigure[]{\includegraphics[width=.48\textwidth]{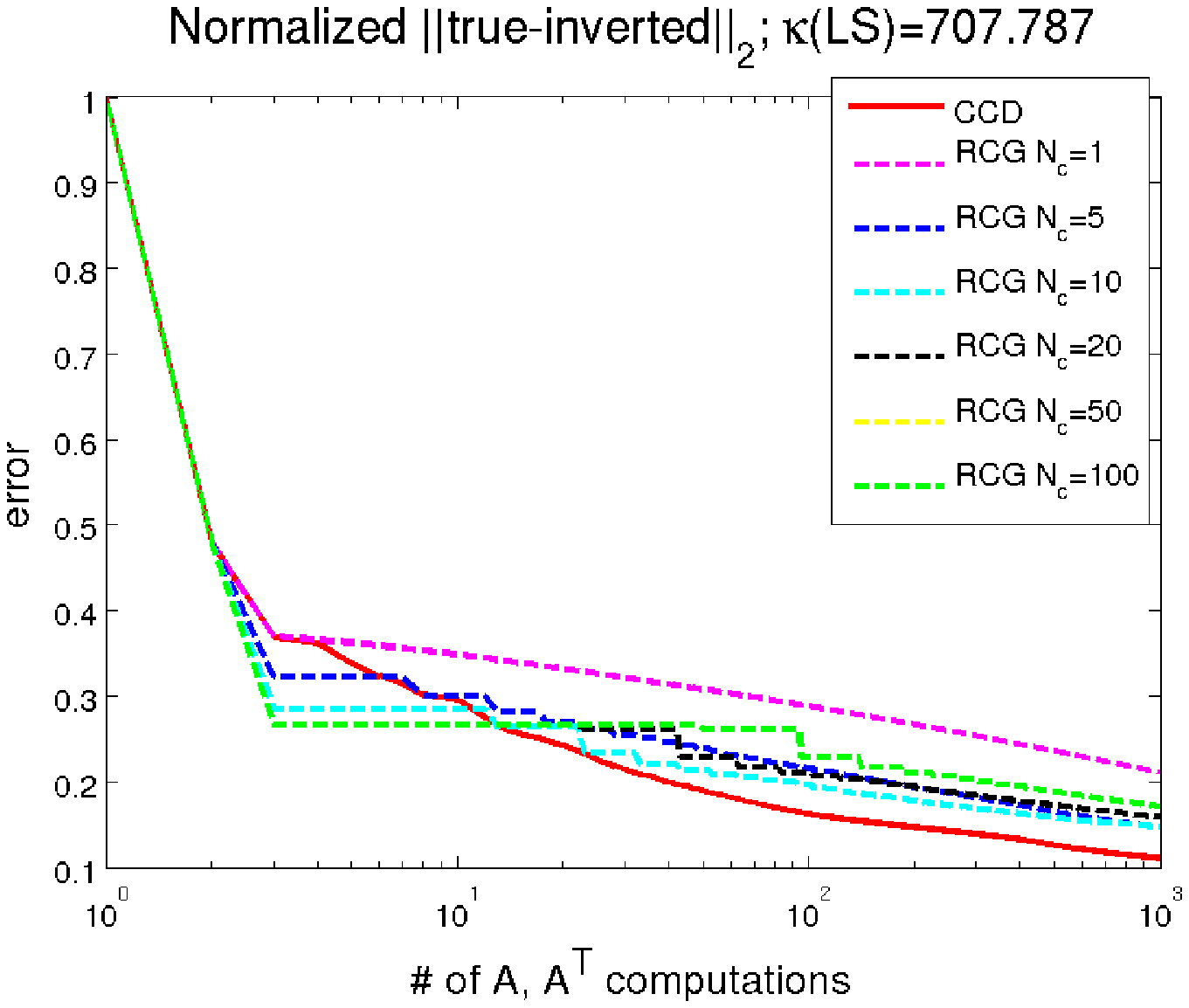}\label{fig:b100}}
          \vspace{-.5cm}
          \caption{ Convergence rates for CCD and RCG with various $N_c$ for (a) $\lambda=5$; (b) $\lambda=10$; (c) $\lambda=50$; (d) $\lambda=100$.}
  \end{center}
\end{figure}

\section{Discussion}

Compressive Conjugate Directions provides an efficient implementation of the Alternating Direction Method of Multipliers in $L_1-TV$ regularized inversion problems (\ref{eq:opt0}) with computationally expensive operators $\mathbf{A}$. By accumulating and reusing information on the geometry of the intermediate quadratic objective function (\ref{eq:interm}), the method requires only one application of the operator $\mathbf{A}$ and its adjoint per ADMM iteration while achieving accuracy comparable to that of the ADMM with exact minimization of (\ref{eq:interm}). The method does not improve the worst-case asymptotic convergence rate of the ADMM. However, it can be used for fast recovery of spiky or blocky solution components. The method trades the computational cost of applying operator  $\mathbf{A}$ and its adjoint for extra memory required to store previous conjugate direction vectors (\ref{eq:pqm}).   

Our numerical experiments involving problems of geomechanical inversion demonstrated a trade-off between the number of ADMM iterations required to achieve a sufficiently accurate solution approximation, and condition number of the intermediate least-squares problem (\ref{eq:interm}). Understanding the extent to which this phenomenon applies to solving (\ref{eq:opt0}) with other classes of modeling operators $\mathbf{A}$ requires further analysis.

\subsection{Generalizations}
The primary focus of this work are $L_1-TV$ regularized inversion problems (\ref{eq:opt0}). However, the Steered Conjugate Directions Algorithm~\ref{alg:scd} can be combined with the Method of Multipliers to solve more general problems of large-scale equality-constrained optimization.

For example, consider the problem 
\begin{equation}
\begin{aligned}
        & \|\mathbf{A}\mathbf{u}-\mathbf{d} \|^2_2 \;\rightarrow\; \min,\\
        & \mathbf{B}\mathbf{u} - \mathbf{c} \; = \; \boldsymbol 0,\\
       &\mathbf{u}\; \in \; \mathbb{R}^N,\; \mathbf{d}\;\in\;\mathbb{R}^M,\; \mathbf{A}:\mathbb{R}^N\to\mathbb{R}^M,\;
        \mathbf{B}:\mathbb{R}^N\to\mathbb{R}^K,\;
\end{aligned}
\label{eq:copt0}
\end{equation}
where $\mathbf{A}$ is a computationally expensive operator. Many ``coupled'' systems governing two or more physical parameters can be described mathematically as a constrained problem (\ref{eq:copt0}). Of special interest are cases when $K\; \ll \; \min \left\{N, M\right\}$---e.g., large-scale optimization problems with a localized constraint. Applying the Augmented Lagrangian Method of Multipliers to (\ref{eq:copt0}), after re-scaling the multiplier vector, we get
\begin{equation}
\begin{aligned}
        \mathbf{u}_{k+1}\;=\; & \mathrm{argmin}\, \| \mathbf{A}\mathbf{u} - \mathbf{d}\|_2^2 + \frac{\lambda}{2}\|\mathbf{c}-\mathbf{B}\mathbf{u} + \mathbf{b}_{k}\|^2_2,\\
        \mathbf{b}_{k+1}\;=\; & \mathbf{b}_k \; + \; \mathbf{c}-\mathbf{B}\mathbf{u}_{k+1}.
\end{aligned}
        \label{eq:mult1}
\end{equation}
As before, the minimization on the first line of (\ref{eq:mult1}) is equivalent to solving a system of normal equations with a fixed left-hand side and changing right-hand sides. Combining the dual-variable updates from (\ref{eq:mult1}) with Algorithm~\ref{alg:scd}, we get Algorithm~\ref{alg:scdmm}.

\begin{algorithm}[htl]
        \caption{Steered Conjugate Directions + Method of Multipliers for solving (\ref{eq:copt0})}
 \label{alg:scdmm}
\begin{algorithmic}[1]
\State $\mathbf{u}_{0} \;\gets\; \boldsymbol 0^N,\;\mathbf{b}_0\;\gets\; \boldsymbol 0^K,\; \mathbf{v}_0\;\gets\;\begin{bmatrix}
                \mathbf{d}\\
                \sqrt{\lambda}\left(\mathbf{c}+\mathbf{b}_0\right)
\end{bmatrix}$
\State {$\mathbf{p}_0\;\gets\; \mathbf{F}^T \mathbf{v}_0,\;\mathbf{q}_0\;\gets\;\mathbf{F}\mathbf{p}_0,\;\delta_0\;\gets\; \mathbf{q}^T_0\mathbf{q}_0$}
\For {$k=0,1,2,3,\ldots$}
    \For {$i=0,1,\ldots,k$}
        \State {$\tau_i\;\gets\; {\mathbf{q}_i^T \mathbf{v}_{k}}/\delta_i$}
    \EndFor
    \State {$\mathbf{u}_{k+1}\;\gets\;\sum_{i=0}^{k} \tau_i \mathbf{p}_i$}
    \State {$\mathbf{b}_{k+1}\;\gets\; \mathbf{b}_k +\mathbf{c} - \mathbf{B} \mathbf{u}_{k+1}$}
    \State {$\mathbf{v}_{k+1}\;\gets\;\begin{bmatrix}
                \mathbf{d}\\
                \sqrt{\lambda}\left(\mathbf{c}+\mathbf{b}_{k+1}\right)
\end{bmatrix}$}
    \State{$\mathbf{r}_{k+1}\;\gets\;\mathbf{v}_{k+1}\;-\; \sum_{i=0}^{k}\tau_i  \mathbf{q}_i$}
        
    \State {$\mathbf{w}_{k+1}\;\gets\;  \mathbf{F}^T \mathbf{r}_{k+1}$}
    \State {$\mathbf{s}_{k+1}\;\gets\;  \mathbf{F} \mathbf{w}_{k+1}$}
    \For {$i=0,1,\ldots,k$}
        \State {$\beta_i\;\gets\;  - {\mathbf{q}_i^T \mathbf{s}_{k+1}}/\delta_i$}
    \EndFor
    \State {$\mathbf{p}_{k+1}\;\gets\;\sum_{i=0}^{k} \beta_i \mathbf{p}_i \; + \; \mathbf{w}_{k+1}$}
    \State {$\mathbf{q}_{k+1}\;\gets\;\sum_{i=0}^{k} \beta_i \mathbf{q}_i \; + \; \mathbf{s}_{k+1}$}
    \State {$\delta_{k+1} \;\gets\;\mathbf{q}_{k+1}^T \mathbf{q}_{k+1}$}
    \If {$\delta_{k+1} = 0$}
    \Comment {Use condition ``$\delta_{k+1}  <  \text{tolerance}$'' in practice}
         \State {$\delta_{k+1}\;\gets\;1,\; \mathbf{p}_{k+1}\;\gets\;\mathbf{0}^N,\; \mathbf{q}_{k+1}\;\gets\;\mathbf{0}^{M+K}$}
    \EndIf
    \State {Exit loop if ${\|\mathbf{u}_{k+1}-\mathbf{u}_k\|_2}/{\| \mathbf{u}_k \|_2} \; \le \; \text{target accuracy}$}
\EndFor
\end{algorithmic}
\end{algorithm}
Operator $\mathbf{F}$ in Algorithm~\ref{alg:scdmm} is given by (\ref{eq:Fop}) with $\alpha=1$. A limited-memory version of Algorithm~\ref{alg:scdmm} is obtained trivially by adapting Algorithm~\ref{alg:lmccd}. We envisage potential utility of Algorithm~\ref{alg:scdmm} in applications where storing a set of previous conjugate direction vectors (\ref{eq:pqm}) is computationally more efficient that iteratively solving the quadratic minimization problem in (\ref{eq:mult1}) from scratch at each iteration of the method of multipliers.

The Compressive Conjugate Directions Algorithm~\ref{alg:lmccd} can be extended for solving non-linear inversion problems with $L_1$ and \emph{isotropic} total-variation regularization. Likewise, the Steered Conjugate Directions Algorithm~\ref{alg:scdmm} can be adapted to solving general equality-constrained non-linear optimization problems. A nonlinear theory and further applications of these techniques will be the subject of our next work. 

\bibliographystyle{siam}  
\bibliography{mmslCCD}

\end{document}